\DeclareFontFamily{OT1}{rsfs}{}
\DeclareFontShape{OT1}{rsfs}{n}{it}{<-> rsfs10}{}
\DeclareMathAlphabet{\curly}{OT1}{rsfs}{n}{it}
\newcommand{\eqnum}{\refstepcounter{equation}\textup{\tagform@{\theequation}}}
\DeclareRobustCommand{\SkipTocEntry}[3]{}
\newcommand\@dotsep{4.5}
\def\@tocline#1#2#3#4#5#6#7{\relax
  \ifnum #1>\c@tocdepth % then omit
  \else
    \par \addpenalty\@secpenalty\addvspace{#2}%
    \begingroup \hyphenpenalty\@M
    \@ifempty{#4}{%
      \@tempdima\csname r@tocindent\number#1\endcsname\relax
    }{%
      \@tempdima#4\relax
    }%
    \parindent\z@ \leftskip#3\relax \advance\leftskip\@tempdima\relax
    \rightskip\@pnumwidth plus1em \parfillskip-\@pnumwidth
    #5\leavevmode #6\relax
    \leaders\hbox{$\m@th
      \mkern \@dotsep mu\hbox{.}\mkern \@dotsep mu$}\hfill
    \hbox to\@pnumwidth{\@tocpagenum{#7}}\par
    \nobreak
    \endgroup
  \fi}
\makeatletter \@addtoreset{equation}{section} \makeatother
\renewcommand{\theequation}{\thesection.\arabic{equation}}
\newtheorem{defn}[equation]{Definition}
\newtheorem{thm}[equation]{Theorem}
\newtheorem{lem}[equation]{Lemma}
\newtheorem{cor}[equation]{Corollary}
\newtheorem{prop}[equation]{Proposition}
\newenvironment{rmk}{\noindent\textbf{Remark}.}{\medskip}
\renewcommand{\div}{\operatorname{div}}
\renewcommand{\det}{\operatorname{det}}
\newcommand{\pic}{\operatorname{Pic}}
\renewcommand{\O}{\mathcal{O}}
\renewcommand{\hom}{\mathcal{H}om}
\newcommand{\ext}{\mathcal{E}xt}
\newcommand{\Hom}{\operatorname{Hom}}
\newcommand{\Ext}{\operatorname{Ext}}
\newcommand{\tr}{\operatorname{tr}}
\newcommand{\vir}{\operatorname{vir}}
\newcommand{\jac}{\operatorname{Jac}}
\newcommand{\vd}{\op{vd}}
\newcommand{\id}{\operatorname{id}}
\newcommand{\bu}{\bullet}
\newcommand\To{\longrightarrow}
\newcommand\beq[1]{\begin{equation}\label{#1}}
\newcommand\eeq{\end{equation}}
\def\ba #1\ea{\begin{align*}#1\end{align*}}
\def\bal#1\eal{\begin{align}#1\end{align}}
\newcommand\arXiv[1]{\href{http://arxiv.org/abs/#1}{arXiv:#1}}
\newcommand{\op}[1]{\operatorname{#1}}
\newcommand{\un}[1]{\underline{#1}}
\newcommand{\wt}[1]{\widetilde{#1}}
\renewcommand{\a}{\alpha}
\renewcommand{\t}{\theta}
\newcommand{\sff}[1]{\mathsf{#1}}
\newcommand{\bb}[1]{\mathbb{#1}}
\newcommand{\bff}[1]{\mathbf{#1}}
\newcommand{\cc}[1]{\mathcal{#1}}
\newcommand{\xr}[1]{\xrightarrow{#1}}
\newcommand{\hra}{\hookrightarrow}
\newcommand{\arr}[1]{\xymatrix@-.8pc{\ar[r]^-{#1} & }}
\newcommand{\onto}[1]{\xymatrix@-.8pc{\ar@{->>}[r]^-{#1} & }}
\newcommand{\into}[1]{\xymatrix@-.8pc{\ar@{^{(}->}[r]^-{#1} & }}
\newcommand{\toin}[1]{\xymatrix@-.8pc{ & \ar@{_{(}->}[l]_-{#1}}}
\title{Counting stable sheaves on singular curves and surfaces}
\author{Amin Gholampour}
\begin{document}
\maketitle
\begin{abstract} Given a quasi-projective scheme $M$ over $\bb C$ equipped with a perfect obstruction theory and a morphism to a nonsingular  quasi-projective variety $B$, we show it is possible to find an affine bundle $ \wt M/ M$ that admits a perfect obstruction theory relative to $B$.  We study the resulting virtual cycles on the fibers of $\wt M/B$ and relate them to the image of the virtual cycle $[M]^{\vir}$ under refined Gysin homomorphisms.  

Our main application is when $M$ is a moduli space of stable codimension 1 sheaves with a fixed determinant $L$ on a nonsingular projective surface or Fano threefold, $B$ is the linear system $|L|$, and the morphism to $B$ is given by taking the divisor associated to a coherent sheaf.%, and study the resulting virtual cycles on moduli space of stable sheaves  .
\end{abstract}
\tableofcontents

\section{Introduction}
Suppose $\pi \colon M\to B$ is a morphism of quasi-projective schemes over $\bb C$. A relative perfect obstruction theory (r.p.o.t) is a map in the derived category $D(M)$ $$\a \colon E^\bu\to \bb L_{M/B},$$ where $E^\bu$ is of perfect amplitude $[-1,0]$ (i.e. is represented by a 2-term complex of vector bundles in degrees $-1$ and $0$), $\bb L_{M/B}$ is the relative cotangent complex,  $h^0(\a)$ is an isomorphism, and $h^{-1}(\a)$ is surjective (see \cite{BF}). If $B=\op{Spec}(\bb C)$ then this data is called a (absolute) perfect obstruction theory (p.o.t) for $M$, and it  leads to construction of a virtual cycle $$[M]^{\vir}\in A_{\op{rk}(E^\bu)}(M).$$ Siebert's formula \cite{S} expresses  $[M]^{\vir}$ in terms of the Chern classes of $E^\bu$ and Fulton's Chern class of $M$: 
$$[M]^{\vir}=\big \{c\big(-(E^\bu)^\vee\big) \cap c_F(M)\big \}_{\op{rk}(E^\bu)}.$$

Suppose that $B$ is nonsingular. A key feature of an r.p.o.t is that its restriction to each fiber of $\pi$ over a closed point of $B$ gives a p.o.t (and hence results in a virtual cycle) over this fiber. This is the case because for any closed point $ b\in B$ with $M_b:= \pi^{-1}(b)$ the restriction of $\bb L_{M/B}$ to $M_b$ is isomorphic to the cotangent complex $\bb L_{M_b}$, and also the other requirements of an r.p.o.t are preserved under basechange.  This fact has been used to prove the deformation invariance of the invariants defined via virtual fundamental classes \cite[Prop 7.2]{BF}, such as Gromov-Witten and Donaldson-Thomas invariants. 

In this paper, we study this property of r.p.o.t's.  Given $\pi\colon M\to B$ as above in which $B$ is nonsingular and $M$ is virtually smooth i.e. is equipped with a p.o.t, we show that after perhaps passing to an affine bundle $\wt M\to M$, it is possible to deduce an r.p.o.t for $\wt M\to B$ and as a result a p.o.t for $\wt M_b$. The resulting virtual cycle $[\wt M_b]^{\vir}$ determines (via the identification of the Chow groups of a scheme and an affine bundle over it)  a class $$[M_b]^{\un{\vir}}\in A_{\vd_{M}-\dim B}(M_b),$$ where $\vd_M$ is the virtual dimension of $M$.  We use underline to remind ourselves that $[M_b]^{\underline{\vir}}$ is not necessarily resulted from a p.o.t on $M_b$ but rather from one on an affine bundle above it. Main applications of this paper rely on the following properties:
\begin{itemize}
\item Integrations against $[M_b]^{\un{\vir}}$ do not depend on the choice of $b$ as long as the integrands are pulled back from $H^*(M)$.
\item $[M_b]^{\un{\vir}}$ is simple to understand when the fiber $M_b$ is smooth.
\item One can apply a virtual localization formula to $[M_b]^{\un{\vir}}$ in the presence of $\bb C^*$ actions on $M$ and $B$ for which $\pi$ is equivariant.
\end{itemize}

Let $C$ be a nonsingular quasi-projective variety. In Theorem \ref{rest} this is carried out  more generally for a flat morphism  $i\colon C\to B$ onto a nonsingular subvariety of $B$, and the fiber product $$M_C:=C\times_B M$$ replacing $i\colon\{b\}\hra B$ and the fiber $M_b$, respectively, and a Siebert type Chern class formula is found for the resulting class $[M_C]^{\un \vir}$ (see Definition \ref{virc}). Theorem \ref{rest} also shows that $$[M_C]^{\un \vir}=i^! [M]^{\vir},$$ where $i^!$ is the product of the canonical orientations of a flat  morphism and a regular embedding (\cite[Sec 17.4]{Fu}).
In particular, $[M_C]^{\un \vir}$ is \emph{independent} of the choice of an affine bundle. The first two bullet points above follow from Theorem \ref{rest} and basic intersection theory, and the third one from Formula \eqref{loc}.

We apply this to the case where $M:=M^{\op s}(X,P)$ is the moduli space of stable codimension 1 sheaves with a fixed Hilbert polynomial $P$ on a nonsingular projective surface or Fano threefold $X$. It is well-known that $M^{\op s}(X,P)$ carries a natural perfect obstruction theory (\cite{T, HT}). We first take $B$ to be an irreducible component $\pic_L(X)$ of the Picard scheme $\pic(X)$ containing a fixed line bundle $L$, and $\pi$ to be the  natural morphism given by taking the determinant (see Section \ref{deter}): $$\det\colon M^{\op s}(X,P)\To \pic_L(X).$$ Let $M^{\op s}_L(X,P)\subset M^{\op s}(X,P)$ be the fiber of $\det$ over $L$, which is identified with the sub-moduli space of sheaves with fixed determinant $L$. 
The second morphism we study is $$\div\colon M^{\op s}_L(X,P)\To |L|,$$ where $|L|$ denotes the linear system of divisors of sections of $L$ and $
\div$ is the natural morphism defined by taking the divisor associated to a coherent sheaf  in $X$ (see Section \ref{supor}). For any divisor $D\in |L|$, let $M^{\op s}_D(P)$ be the fiber of $\div$ over $D$. It is identified with $M^{\op s}(D,P)$, the moduli space of stable sheaves on $D$ with Hilbert polynomial $P$ (Lemma \ref{D/L}). Note that $D$ could be singular or even reducible or non-reduced.  We can then study the resulting cycles  $[M^{\op s}_D(X,P)]^{\un \vir}$  when $D$ varies in $|L|$.   Siebert type Chern class formula of Theorem \ref{rest} gives Corollary \ref{GD}, which in turn implies $[M^{\op s}_D(X,P)]^{\un \vir}$ depends on $X$ only through the normal bundle $\O_D(D)$ of $D$ in $X$. %When $D$ is integral $M_D(P)$ is identified with the moduli space of rank 1 torsion free sheaves on $D$.

We consider two main examples. In the first example, we consider $X=\bb P^2$ and the Hilbert polynomial $P(z)=dz+1$ (with respect to $\O(1)$) (see Section \ref{P2}). It turns out for each degree $d$ hypersurface $D\subset \bb P^2$ $$[M^{\op s}_D(P)]^{\un \vir}\in A_g(M^{\op s}_D(P)),\quad g:=(d-1)(d-2)/2.$$ If $D$ is nonsingular $M^{\op s}_D(P)\cong \jac(D)$, the Jacobian of the genus $g$ curve $D$, and $[M^{\op s}_D(P)]^{\un \vir}$ is its fundamental class. We use this to find an integral of a codimension $g$ class against $[M^{\op s}_D(P)]^{\un \vir}$ for arbitrary $D$. When $d=2d'>3$  and $g':=(d'-1)(d'-2)/2$, and $D'\subset \bb P^2$ is nonsingular of degree $d'$, in Proposition \ref{rk2D'} we mix this idea with virtual localization and find a relation among integrals over \begin{itemize} \item $M^{\op s}(D',2,2g'-1)$, the moduli space  of stable rank 2 degree $2g'-1$ vector bundles.  It  is a  nonsingular projective variety of dimension $4g'-3$. \item  $\pic^k(D')\times \op{Sym}^{2k+3d'-1}(D')$, the product of the symmetric product and degree $k$ Picard variety, where $k$ varies from $\lceil \frac{1-3d'}2 \rceil$ to $g'-1$. \end{itemize} 

In the second example, we consider $X=\bb P^3$ and the Hilbert polynomial $P(z)=z^2+2z+1-n$ (with respect to $\O(1)$), where $n$ is a nonnegative integer (see Section \ref{p3}).  It turns out for each degree $2$ hypersurface $D\subset \bb P^3$ $$[M^{\op s}_D(P)]^{\un \vir}\in A_0(M^{\op s}_D(P)).$$ If $D$ is a nonsingular quadratic then $M^{\op s}_D(P)\cong D^{[n]}$ is the Hilbert scheme of $n$ points on $D\cong \bb P^1\times \bb P^1$ and $[M^{\op s}_D(P)]^{\un \vir}$ is the top Chern class of a Carlsson-Okounkov K-theory class \cite{CO}.  We use this to find  $\deg [M^{\op s}_D(P)]^{\un \vir}$ for an arbitrary quadratic $D$ (see \eqref{degr}). Moreover, by an application of virtual localization formula we find a relation among the integrals of products of Carlsson-Okounkov type Chern classes\footnote{If $Y$ is nonsingular and projective, $\cc E$ and $\cc F$ are two coherent sheaves on $S\times Y$ flat over $S$, and $p\colon S\times Y\to S$ is the projection, we refer to $c_k(R\hom_p(\cc E,\cc F))$ as a Carlsson-Okounkov type Chern class.} over the following spaces (see Proposition \ref{p1p1})\begin{itemize}
\item  $M^{\op s}(\bb P^2, 2, -h, n)$, the moduli space of stable rank 2 torsion free sheaves on $\bb P^2$ with Chern classes $c_1=$ minus class of a line, $c_2=n$. It   is a  nonsingular projective variety of dimension $4n-4$.
\item  $(\bb P^1\times \bb P^1)^{[n]}$.
\item  $(\bb P^2)^{[k]}\times(\bb P^2)^{[n-k]}$, where $k\in \bb Z$ varies from $\lceil n/2 \rceil$ to $n$.
\end{itemize}
Similar analysis can be carried out when $\deg(D)=3$.  However, when $\deg(D)=d\ge 4$ the resulting  cycle $[M_D(P)]^{\un \vir}$ is always zero for dimension reason. The construction of this paper instead gives a 0-dimensional cycle on $M_C(P)$, where $C\subset |\O(d)|$ is a nonsingular subvariety of dimension $h^2(\O_D)$. See Proposition \ref{dge4} for a formulation of a precise statement. 

It is desirable to \emph{reduce} the p.o.t. of \cite{T, HT} (at least on some connected components of $M^{\op s}(\bb P^3,P)$) increasing its virtual dimension by $h^2(\O_D)$. This would fix the problem mentioned above (i.e. vanishing of $[M^{\op s}_D(P)]^{\un \vir}$) and allow us to construct a 0-dimensional cycle on $M_D(P)$ uniformly for any degree $d$ hypersurface $D\subset \bb P^3$. We plan to pursue this approach in a future work.

\subsection*{Acknowledgement}
I learned using affine bundles to modify obstruction theories from Richard Thomas during our work on \cite{GT1, GT2}. I would like to thank him for useful discussions and his answering all my questions.  

\subsection*{Conventions} We will work over the field of complex numbers. Given a morphism $f\colon X\to Y$ of scheme, we often use the same letter $f$ to denote its basechange by any morphism $Z\to Y$, i.e. $f\colon X\times_YZ\to Z$. We also sometimes suppress pullback maps $f^*$ on sheaves.

\section{Absolute to relative} \label{ar}

Let $M$ be a quasi-projective scheme and $\pi \colon M\to B$ be a morphism to a nonsingular quasi-projective variety $B$. We have the exact triangle 
\beq{ctg}\pi^*\Omega_B \arr{\phi }\bb L_{M}\arr{\psi} \bb L_{M/B} \arr{\chi} \pi^*\Omega_B[1] \eeq of cotangent complexes. In fact since $B$ is smooth $\bb L_B\cong \Omega_B$ is a vector bundle (in degree 0) of rank $=\dim B$.

If $\a \colon E^\bu\to \bb L_{M/B} $ is a relative perfect obstruction theory it is well-known that \beq{arel}\op{Cone}\big(E^\bu \xr{\chi \circ \a} \Omega_B[1]\big)[-1]\To \bb L_{M}\eeq is an absolute perfect obstruction theory (see e.g. \cite[Sec 3.5]{MPT}). 

We would like to reverse this operation possibly after pulling back to an affine bundle.  Suppose that \beq{theta} \t\colon F^\bu \to \bb L_{M}\eeq  is a given  perfect obstruction theory with the resulting virtual fundamental class $$[M]^{\vir}\in A_{\vd_{M}}(M), \qquad \vd_M:=\op{rk}(F^\bu).$$
As $F^{\bu}$ has perfect amplitude contained in $[-1,0]$, $\tau^{\ge -1}F^\bu\cong F^\bu$ and so we have a factorization $$\tau^{\ge -1} \t\colon F^\bu \arr{\t} \bb L_{M}\xr{\tau^{\ge -1}} \tau^{\ge -1} \bb L_{M}.$$ We start by trying  to lift the map $\tau^{\ge -1}\phi$  in 
\beq{lift}\xymatrix{&&F^\bu \ar[d]^-{\tau^{\ge-1}\t}&&\\ \pi^*\Omega_B \ar[rr]^-{\tau^{\ge -1}\phi} \ar@{-->}[rru]&&\tau^{\ge -1}\bb L_{M} \ar[rr]^-{\tau^{\ge -1}\psi}&&\tau^{\ge -1}\bb L_{M/B}}\eeq where the bottom row is obtained by truncating \eqref{ctg}.
Choose an isomorphism in $D(M)$   $$\tau^{\ge -1}\bb L_{M}\cong \{ L^{-1}\xr g L^0\}, \qquad F^\bu\cong \{ F^{-1}\xr f F^0\}$$ in which  $L^0$ is locally free. By choosing a very negative locally free resolution for $F^\bu$ \footnote{Very negativity ensures that each term of such a resolution behaves like projectives in the abelian
category of coherent sheaves.}, we may assume that the map $\tau^{\ge -1}\t$ in the derived category is in fact represented by the pair of maps $(\t^{-1},\t^0)$ in the following commutative diagram 
$$\xymatrix{ \ker f \ar@{^(->}[r] \ar@{->>}[d]^-{h^{-1}(\t)} &  F^{-1} \ar[d]^{\t^{-1}}\ar[r]^-f &F^0 \ar@{->>}[r] \ar[d]^{\t^{0}} & \Omega_{M} \ar@{=}[d]^-{h^{0}(\t)}\\ \ker g \ar@{^(->}[r] &  L^{-1} \ar[r]^-g & L^0 \ar@{->>}[r]  & \Omega_{M}.}$$  Define $K:=\ker\big(h^{-1}(\t)\big)$. Then, $f(K)=0=\t^{-1}(K)$ and so both maps $f$ and $\t^{-1}$ factor through $F^{-1}/K$ and we obtain a quasi-isomorphism $$\{F^{-1}/K\to F^{0}\} \sim \{L^{-1}\xr{ g} L^0\}.$$ The upshot is  that we may assume $L^0=F^0$, $L^{-1}=F^{-1}/K$, $\t^0=\id$ and $\t^{-1}\colon F^{-1}\to F^{-1}/K$ is the natural epimorphism. Such a presentation of a perfect obstruction theory was used before (e.g. see \cite{FG}). If we knew that that the map $\tau^{-1}\phi$ in \eqref{lift} was simply given by a map $\pi^*\Omega_B\to F^0$ then by the discussion above we would immediately get the desired lift. 

We may find an affine bundle $\rho \colon \wt{ M} \to M$, say of relative dimension $a$, such that $\wt{ M}$ is an affine scheme. To do this,  first embed $M$ as a closed subscheme of a projective space $\cc A$. Then, using Jouanolou trick \cite{J}, find an affine bundle $\rho \colon \wt{\cc A}\to \cc A$ whose total space is an affine variety, and then restrict this affine bundle to $M$. By \cite[Cor 2.5.7]{Kr}  $$\rho^*\colon A_{*}(M) \to A_{*+a}(\wt{M})$$ is an isomorphism. Since $\rho$ is smooth $\bb L_{\wt{M}/M} \cong \Omega_{\wt{M}/M}$ is a rank $a$ vector bundle in degree 0, and hence the natural exact triangle of cotangent complexes $$\rho^* \bb L_{M}\xr \kappa \bb L_{\wt{M}}\to \Omega_{\wt{M}/M}$$ is split over the affine scheme $\wt{M}$ say by means of a map $s\colon \Omega_{\wt{M}/M}\to \bb L_{\wt{M}}$. Therefore, if we define $$\wt F^\bu:=\rho^*F^\bu\oplus \Omega_{\wt{M}/M}$$ the map $\wt \t:=[\kappa \circ \rho^* \t \;\; s]$ clearly gives a perfect obstruction theory $\wt \t\colon \wt F^\bu \to  \bb L_{\wt{M}}$. If $[\wt{M}]^{\vir}$ is the resulting virtual cycle then by \cite[Prop 5.10]{BF} \beq{functor}[\wt{M}]^{\vir}=\rho^* [M]^{\vir} \in A_{\vd_{M}+a}(\wt{M}).\eeq

 The point of introducing this affine bundle is that by pulling back \eqref{lift} to $\wt{M}$ we can ensure that $\phi':=\rho^* (\tau^{\ge -1} \phi)$ is given by a genuine map of complexes given in the bottom row of the commutative diagram (note that $\rho$ is flat)
\beq{lfom}\xymatrix{ \rho^* F^{-1} \ar@{->>}[d]\ar[r]^-{\rho^* f} &\rho^* F^0  \ar@{=}[d] &&\\  \rho^* (F^{-1}/K) \ar[r] & \rho^* F^0   & & \ar[ll]_-{\phi'} \rho^* \pi^*\Omega_{B} \ar@{-->}[llu]_-{\wt \phi},}\eeq
 which shows that the resulting dashed arrow, denoted by $\wt \phi$, lifts the map $\tau ^{-1}\phi$ in Diagram \eqref{lift} after pulling it back to $\wt {M}$.

%Over $\cc A$, the map $\phi$ in Diagram \eqref{lift} is also  given by the genuine map of complexes, and by the discussion above we get the commutative diagram
 %to % \ar@{->>}[d]^-v\\  & \Omega_{M}\ar@{=}[r]^-{h^0(\t)}& \Omega_{M}.}\eeq The bottom row of this diagram gives the desired lift of the map $\phi$ (filling the dashed arrow in Diagram \eqref{lift}).%where the bottom vertical arrows are induced by the natural maps $\bb L_{M}\to h^0(\bb L_{M})$ and $F^{\bu} \to h^0(F^\bu)$.%, and $\t^0$, $\t^{-1}$ are degree 0, $-1$ part of $\t$ represented by a genuine map of complexes of locally free sheaves.

%\footnote{First choose a very negative locally free resolution for $(\bb L_{M})^\vee$ to get a genuine map of complexes $(\bb L_{M})^\vee\to \pi^*T_B$ representing dual of $\phi$ and then dualize back every thing. Next choose a very negative locally free resolution for $F^\bu$ in order to represent $\t$ by a genuine map of complexes.} 

Next, define \bal \label{relob} &E^\bu:=\op{Cone}\big(\rho^*\pi^*\Omega_B \To \wt F^\bu\big)\cong\\\notag  &\big \{ \rho^*\pi^*\Omega_B\oplus \rho^*F^{-1}\xr{\tiny \left[\begin{array}{cc}\wt \phi & \rho^* f \\ 0 & 0\end{array}\right]}\rho^*F^0\oplus \Omega_{\wt{M}/M}\big \}.\eal We get the commutative diagram
\beq{alpha}\xymatrix{\rho^* \pi^*\Omega_B \ar@{=}[d] \ar[r] & \wt F^\bu \ar[d]^-{\wt \t}\ar[r]& E^\bu \ar@{-->}[d]^-\a\\ \rho^*\pi^*\Omega_B \ar[r] &\bb L_{\wt {M}} \ar[r] &\bb L_{\wt {M}/B}}\eeq in which both rows are exact triangles and the dashed arrow labeled $\a$ is induced by the left square in the diagram.
 \begin{lem}
$\a\colon E^\bu \to  \bb L_{\wt{ M}/B}$ is a relative perfect obstruction theory. %Moreover, $h^{-1}(E^\bu)\cong h^{-1}(F^\bu)$.
 \end{lem}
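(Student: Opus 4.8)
The plan is to establish the three defining properties of a relative perfect obstruction theory for $\a$: that $E^\bu$ has perfect amplitude in $[-1,0]$, that $h^0(\a)$ is an isomorphism, and that $h^{-1}(\a)$ is surjective. The first is immediate from the explicit presentation \eqref{relob}: since $B$ is nonsingular $\Omega_B$ is locally free, and since $\rho$ is an affine bundle $\Omega_{\wt M/M}$ is locally free of rank $a$, so $E^\bu$ is represented there by a two-term complex of vector bundles sitting in degrees $-1$ and $0$.

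For the cohomological conditions I would compare $\a$ with the absolute obstruction theory $\wt\t$ through \eqref{alpha}. The bottom row there is the exact triangle of cotangent complexes for the composite $\pi\rho\colon \wt M\to B$, and its left-hand map factors through the top-left map $g\colon \rho^*\pi^*\Omega_B\to \wt F^\bu$ as $\wt\t\circ g$ (this factorization is precisely the commutativity of the left square of \eqref{alpha}). Applying the octahedral axiom to $\rho^*\pi^*\Omega_B \xr{g} \wt F^\bu \xr{\wt\t} \bb L_{\wt M}$ then yields a distinguished triangle
$$E^\bu \xr{\a} \bb L_{\wt M/B} \To \cone(\wt\t) \To E^\bu[1],$$
in which $\a$ is exactly the map induced by the left square of \eqref{alpha} (up to the usual non-uniqueness in the octahedral axiom, which is harmless here). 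In particular $\cone(\a)\cong\cone(\wt\t)$. Since $\wt\t\colon \wt F^\bu\to \bb L_{\wt M}$ is a perfect obstruction theory, established above, running the long exact cohomology sequence of $\wt F^\bu\to \bb L_{\wt M}\to \cone(\wt\t)$ — using that $\wt F^\bu$ has amplitude $[-1,0]$ and $\bb L_{\wt M}$ has no cohomology in positive degrees — gives $h^i(\cone(\wt\t))=0$ for all $i\ge -1$, i.e.\ $\cone(\wt\t)$, and hence $\cone(\a)$, is concentrated in degrees $\le -2$.

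Feeding this back into the long exact cohomology sequence of $E^\bu\xr{\a}\bb L_{\wt M/B}\to \cone(\a)$, and using that $E^\bu$ has amplitude $[-1,0]$ while $\bb L_{\wt M/B}$, being a relative cotangent complex, has no cohomology in positive degrees, the vanishing of $h^{-1}$ and $h^0$ of $\cone(\a)$ forces $h^0(\a)$ to be an isomorphism and $h^{-1}(\a)$ to be surjective. Together with the amplitude statement this shows $\a$ is a relative perfect obstruction theory.

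I expect the only genuine subtlety to be the bookkeeping that legitimises the octahedral input: checking that \eqref{alpha} really is a morphism of distinguished triangles — commutativity of its left square encodes that $\wt\phi$ was constructed to lift the pullback of $\tau^{\ge -1}\phi$ through the obstruction theory, which is exactly the role played by passing to the affine bundle $\wt M$ — together with keeping the amplitude and connectivity estimates straight so that the two long exact sequence chases close up. Everything beyond that is formal triangulated-category manipulation.
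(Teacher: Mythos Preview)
Your argument is correct and follows essentially the same route as the paper: both deduce the result from the morphism of triangles \eqref{alpha} together with the fact that $\wt\t$ is a perfect obstruction theory. The only difference is packaging—the paper takes cohomology of \eqref{alpha} directly and performs a five-lemma-style diagram chase, whereas you rephrase this via the octahedral axiom as the identification $\cone(\a)\cong\cone(\wt\t)$ and the equivalence between the p.o.t.\ conditions and the vanishing of $h^{\ge -1}$ of the cone.
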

 \begin{proof}
By \eqref{relob}  $E^{\bu}$ has perfect amplitude $[-1,0]$. We know that $h^0(\wt \t)$ is an isomorphism and $h^{-1}(\wt \t)$ is surjective. Applying cohomology to \eqref{alpha}, we get the commutative diagram
$$\xymatrix{h^{-1}(\wt F^\bu) \ar@{->>}[d]^-{h^{-1}(\wt \t)} \ar@{^(->}[r]& h^{-1}(E^\bu) \ar[r] \ar[d]^-{h^{-1}(\a)}  &  \rho^*\pi^*\Omega_B \ar@{=}[d] \ar[r]& h^{0}(\wt F^\bu) \ar@{=}[d]^-{h^0(\wt \t)}  \ar@{->>}[r]& h^{0}(E^\bu)\ar[d]^-{h^0(\a)} \\ h^{-1}(\bb L_{\wt{M}}) \ar@{^(->}[r]& h^{-1}(\bb L_{\wt{M}/B}) \ar[r]&  \rho^*\pi^*\Omega_B \ar[r] &\Omega_{\wt{M}} \ar@{->>}[r]& \Omega_{\wt{M}/B} }$$ in which the rows are exact. From this diagram now it is easy to conclude that $h^0(\a)$ is an isomorphism and $h^{-1}(\a)$ is surjective, which proves the claim. %The diagram also implies the second claim in the lemma.
 \end{proof}

For any nonsingular subvariety $i\colon  C \into{} B$, we denote $M_{C}:=C \times_{B}M$, and $\wt M_C:=\rho^{-1}(M_C)$. This gives the diagram with both squares Cartesian \beq{fibr}\xymatrix  @-1pc{\wt M_C \ar[d]_-{\rho} \ar[r] &\wt{M} \ar[d]^-\rho\\ M_C \ar[d]_-{\pi} \ar[r] &M \ar[d]^-\pi\\ C\ar[r]^-i & B.}\eeq
\begin{thm} i) \label{rest} $\wt M_C$ is equipped with a $C$-relative perfect obstruction theory with the resulting virtual cycle $[\wt M_C]^{\vir}$ satisfying
\ba(\rho^*)^{-1}[\wt M_C]^{\vir}&= i^![M]^{\vir}=\big \{c\big(-(F^\bu)^\vee+\pi^*N_{i(C)/B}\big) \cap c_F(M_C)\big \}_{\vd_{M_C}},\ea where $i^!$ is the refined Gysin homomorphism, $\vd_{M_C}:=\vd_{M}-\dim B+\dim C$, $N_{i(C)/B}$ is the normal bundle of $i(C)$ in $B$, and $c_F(-)$ is the Fulton's Chern class.\\
ii) More generally, the same will be true if  $i$ in Diagram \eqref{fibr} is a flat morphism from a nonsingular variety $C$ onto a nonsingular subvariety of $B$. This time, $i^!$ is interpreted the product of the canonical orientations of a flat  morphism and a regular embedding (\cite[Sec 17.4]{Fu}).

\end{thm}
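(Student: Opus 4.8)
The plan is to derive the statement from three inputs: a base-change construction of the relative obstruction theory, Siebert's formula, and the compatibility of virtual cycles with refined Gysin maps. I carry out part i) (so $i\colon C\hra B$ is a nonsingular subvariety) in detail. First I would restrict $\a\colon E^\bu\to\bb L_{\wt M/B}$ along the Cartesian squares of \eqref{fibr}: the outer rectangle is Cartesian, so $\wt M_C=C\times_B\wt M$, and the natural composite
$$\a_C\colon\ E^\bu_C:=E^\bu|_{\wt M_C}\To\bb L_{\wt M/B}|_{\wt M_C}\To\bb L_{\wt M_C/C}$$
is again a relative perfect obstruction theory: $h^0$ of a cotangent complex is a sheaf of Kähler differentials, hence commutes with base change, and surjectivity of $h^{-1}$ is preserved — exactly the mechanism recalled in the Introduction and in \cite[Prop 7.2]{BF}. (Since $\pi$, hence $\wt M\to B$, need not be flat, $\bb L_{\wt M/B}$ does not restrict to $\bb L_{\wt M_C/C}$ literally; one works throughout with the truncations $\tau^{\ge-1}$, equivalently with intrinsic normal cones, where only these two conditions enter.) As $C$ is nonsingular, \eqref{arel} upgrades $\a_C$ to an \emph{absolute} perfect obstruction theory $G_C^\bu:=\cone\big(E^\bu_C\to\Omega_C[1]\big)[-1]\to\bb L_{\wt M_C}$ on $\wt M_C$ (pullbacks and connecting maps suppressed), hence to a virtual cycle $[\wt M_C]^{\vir}$, with $\op{rk}(G_C^\bu)=\op{rk}(E^\bu)+\dim C=\vd_M+a-\dim B+\dim C=\vd_{M_C}+a$ by the construction \eqref{relob}.

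Next I would feed $G_C^\bu$ into Siebert's formula. From the triangle defining $G_C^\bu$, the identity $E^\bu=\cone(\rho^*\pi^*\Omega_B\to\wt F^\bu)$, and $\wt F^\bu=\rho^*F^\bu\oplus\Omega_{\wt M/M}$, a short $K$-theory computation on $\wt M_C$ yields $-(G_C^\bu)^\vee=-(F^\bu)^\vee-T_{\wt M_C/M_C}+(T_B-T_C)$, and the normal bundle sequence $0\to T_C\to T_B|_C\to N_{i(C)/B}\to 0$ identifies $T_B-T_C$ with $\pi^*N_{i(C)/B}$. Since $\rho$ is smooth, $c_F(\wt M_C)=c(T_{\wt M_C/M_C})\cap\rho^*c_F(M_C)$ (compatibility of Fulton's Chern class with smooth pullback), so the factors $c(\pm T_{\wt M_C/M_C})$ cancel and Siebert's formula becomes
$$[\wt M_C]^{\vir}=\big\{c\big(-(G_C^\bu)^\vee\big)\cap c_F(\wt M_C)\big\}_{\vd_{M_C}+a}=\rho^*\big\{c\big(-(F^\bu)^\vee+\pi^*N_{i(C)/B}\big)\cap c_F(M_C)\big\}_{\vd_{M_C}},$$
using that $\rho^*$ raises dimension by $a$ and commutes with capping by Chern classes; applying $(\rho^*)^{-1}$ gives the second asserted equality.

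For the first equality, \eqref{functor} gives $[\wt M]^{\vir}=\rho^*[M]^{\vir}$, and since the refined Gysin homomorphism commutes with the flat pullback $\rho^*$ \cite{Fu}, $i^![\wt M]^{\vir}=\rho^*\big(i^![M]^{\vir}\big)$. On the other hand, by the octahedral axiom the absolute obstruction theory that \eqref{arel} attaches to the relative theory $\a$ over the nonsingular $B$ is precisely $\wt\t\colon\wt F^\bu\to\bb L_{\wt M}$, and the one it attaches to the base change $\a_C$ over $C$ is $G_C^\bu$; the base-change compatibility of virtual cycles for relative obstruction theories over a nonsingular base (the case of a point being the deformation-invariance phenomenon of \cite[Prop 7.2]{BF}) then yields $i^![\wt M]^{\vir}=[\wt M_C]^{\vir}$, and comparing gives $(\rho^*)^{-1}[\wt M_C]^{\vir}=i^![M]^{\vir}$. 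For part ii) I would factor $i$ as $C\xr{i_0}i(C)\hra B$ with $i_0$ flat onto the nonsingular subvariety $i(C)$, apply part i) to the regular embedding $i(C)\hra B$, and then pull the whole picture back along the flat $i_0$, using that the construction of the relative obstruction theory (now via flat base change of cotangent complexes, which is harmless), Siebert's formula, and the refined Gysin class — now the product of the orientations of $i_0$ and of $i(C)\hra B$ \cite[Sec 17.4]{Fu} — are all stable under flat base change.

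The hard part will be the base-change compatibility invoked in the previous step, namely that $i^!$ of the virtual cycle of $\wt M$ equals the virtual cycle of $\wt M_C$. Proving it rigorously reduces to a deformation-to-the-normal-cone comparison of the intrinsic normal cone of $\wt M\to B$ with that of $\wt M_C\to C$ inside the relevant vector bundle stacks — the one step that genuinely exploits both that the two obstruction theories descend from the \emph{single} relative one $\a$ and that $B$ is nonsingular. The only other delicate point is the failure of flatness of $\wt M\to B$ in the first step, handled as indicated by replacing cotangent complexes with their $\tau^{\ge-1}$-truncations, equivalently by working directly with intrinsic normal cones as in \cite{BF}.
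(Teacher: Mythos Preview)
Your proposal is correct and follows essentially the same route as the paper: restrict $\a$ via \cite[Prop 7.2]{BF} to get a relative theory on $\wt M_C$, convert to an absolute one via \eqref{arel}, compute the virtual tangent bundle in $K$-theory, apply Siebert's formula, and use the commutation of $i^!$ with $\rho^*$ together with $i^![\wt M]^{\vir}=[\wt M_C]^{\vir}$ and \eqref{functor}. The only notable difference is your treatment of Fulton's Chern class: you invoke the general identity $c_F(\wt M_C)=c(T_{\wt M_C/M_C})\cap\rho^*c_F(M_C)$ for a smooth morphism, whereas the paper derives exactly this by unwinding the Jouanolou embedding $\wt M_C\hra\wt{\cc A}$ and citing \cite[Ex 4.2.6, Prop 4.2.b]{Fu}; these are the same computation packaged differently.
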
 
\begin{proof}
i) By \cite[Prop 7.2]{BF} the restriction $$\a|_C \colon E^\bu|_C \to \bb L_{\wt M_C/C}$$ is a relative perfect theory.
Since $C$ is nonsingular by the discussion at the beginning of this section $\wt{M}_C$ is equipped with an absolute perfect obstruction theory with the virtual tangent bundle   in K-theory is given by \ba &\rho^*(F^0)^*+T_{\wt M_C/M_C}-\rho^*(F^{-1})^*-\rho^*\pi^*T_B+\rho^*\pi^*T_C\\ &=\rho^*(F^\bu)^\vee+T_{\wt M_C/M_C}-\rho^*\pi^*N_{C/B}.\ea Moreover, the resulting virtual cycle $[\wt M_C]^{\vir}$ satisfies $$ i^! [\wt{M}]^{\vir}=[\wt M_C]^{\vir}.$$
Therefore, by \cite[Thm 6.2.b]{Fu} and \eqref{functor} $$(\rho^*)^{-1}[\wt M_C]^{\vir}=(\rho^*)^{-1} i^! [\wt{M}]^{\vir}=i^![M]^{\vir}.$$ This proves the first equality. For the second equality, by \cite[Thm 4.6]{S}, \ba &[\wt M_C]^{\vir}=\big\{c\big(\rho^*(F^\bu)^\vee-T_{\wt M_C/M_C}+\rho^*\pi^*N_{C/B}\big)|_{\wt M_C} \cap c_F(\wt M_C)\big \}_{\vd_{M_C}+a}.\ea Recall that in Jouanolou's construction one may take $\wt M_C\to M_C$ to be the restriction of an affine bundle $\wt{\cc A}\to \cc A$ in which $\cc A$ is nonsingular (in fact a projective space) containing $M_C$ as a closed subset. By \cite[Ex 4.2.6, Prop 4.2.b]{Fu} \ba c_F(\wt M_C)&=c(T_{\wt {\cc A}})|_{\wt M_C}\cap s(\wt M_C, \wt{\cc  A})\\&=c\big(\rho^* T_{\cc A}+T_{\wt {\cc A}/\cc A }\big)|_{\wt M_C} \cap \rho^* s(M_C, \cc A).\ea
Putting this into the formula above for $[\wt M_C]^{\vir}$, and using the isomorphism $T_{\wt M_C/M_C}\cong T_{\wt {\cc A}/\cc A }|_{\wt M_C}$, we get the result.

ii) This follows from Part i and  \cite[Prop 7.2]{BF} and \cite[Prop 4.2.b]{Fu}.
\end{proof}

\begin{defn} \label{virc} Suppose $i\colon C\to B$ is  is a flat morphism from a nonsingular variety $C$ onto a nonsingular subvariety of $B$ (as in Theorem \ref{rest}). Define a virtual cycle $[M_C]^{\un \vir} \in A_{\vd_{M_C}}(M_C)$ by $$[M_C]^{\un \vir}:=(\rho^*)^{-1}{[\wt M_C]^{ \vir}}.$$ By Theorem \ref{rest} $[M_C]^{\un \vir}$ is independent of the choice of the affine bundle $\rho \colon \wt M\to M$. 
\end{defn}
\begin{rmk} As mentioned above, $[M_C]^{\un \vir}$ is not necessarily resulted from a perfect obstruction theory on $M_C$ but from one on an affine bundle over it. Second formula in Theorem \ref{rest} shows that it is as if it were resulted from a a perfect obstruction theory on $M_C$ with virtual tangent bundle in K-theory the pullback of $(F^\bu)^\vee-\pi^*N_{i(C)/B}$.
\end{rmk}

The following situations are interesting special cases and arise in some of our applications.
\smallskip First of all, suppose there exists an open subset $B_0\subseteq B$ such that if $M_0:=\pi^{-1}(B_0)$,  $$\pi|_{M_0}\colon M_0\to B_0$$ is a smooth morphism.  If we take 0-cohomology of \eqref{ctg} we get the usual exact sequence \beq{exact}\pi^* \Omega_B\xr{h^0(\phi)} \Omega_{M}\xr{h^0(\psi)} \Omega_{M/B}\to 0\eeq of cotangent sheaves.%, which is also left exact once restricted to $M_0$. 

\smallskip \noindent \textbf{Situation A:}  $M$ is an integral scheme.

\smallskip \noindent \textbf{Situation B:}  For any closed point $m \in M$ with $b:=\pi(m)$ the induced map $d\pi|_m \colon T_M|_m\to T_B|_b$ between Zariski tangent spaces is surjective. Equivalently, the dual map between the cotangent spaces $\Omega_B|_b\to \Omega_{M}|_m$ is injective.

\begin{prop} (i) In Situation A,  \eqref{exact} is a short exact sequence. As a result, $\tau^{\le -1} \bb L_{M}\cong \tau^{\le -1} \bb L_{M/B}.$\\
(ii) In Situation B, the map $\wt \phi$ in Diagram \eqref{lfom} is an injection of vector bundles. 
\end{prop}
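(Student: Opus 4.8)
The plan is to prove the two parts essentially independently, each by unwinding the relevant exact sequences of cotangent complexes.

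For part (i), I would argue that in Situation A the map $h^0(\phi)\colon\pi^*\Omega_B\to\Omega_M$ is injective. The point is that $M$ is integral, so it is in particular reduced and irreducible; since $\pi|_{M_0}\colon M_0\to B_0$ is smooth and $M_0$ is a dense open (nonempty because $M$ is irreducible and $\pi$ need only be dominant onto the image — one should check $M_0\neq\emptyset$, which holds since the generic smoothness / the existence of $B_0$ is hypothesized), the sheaf $\pi^*\Omega_B$ is locally free and the sequence $0\to\pi^*\Omega_B\to\Omega_M\to\Omega_{M/B}\to 0$ is exact on $M_0$. A map of coherent sheaves on an integral scheme whose kernel is supported on a proper closed subset must be injective if the source is torsion-free (locally free), because the kernel is a subsheaf of a torsion-free sheaf and simultaneously torsion; hence the kernel vanishes. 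This gives the short exactness of \eqref{exact} globally. For the consequence about truncations: from the exact triangle \eqref{ctg}, $\pi^*\Omega_B\xr{\phi}\bb L_M\xr{\psi}\bb L_{M/B}\xr{}\pi^*\Omega_B[1]$, and $\pi^*\Omega_B$ sitting in degree $0$, applying $\tau^{\le -1}$ and using the long exact cohomology sequence: $h^i(\phi)$ and $h^i(\psi)$ for $i\le -2$ are isomorphisms automatically (since $\pi^*\Omega_B$ has no cohomology there), and in degree $-1$ the relevant piece of the long exact sequence reads $h^{-1}(\bb L_M)\to h^{-1}(\bb L_{M/B})\to h^0(\pi^*\Omega_B)\xr{h^0(\phi)}h^0(\bb L_M)$; injectivity of $h^0(\phi)$ forces $h^{-1}(\psi)$ to be an isomorphism. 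Thus $\tau^{\le -1}\psi\colon\tau^{\le -1}\bb L_M\to\tau^{\le -1}\bb L_{M/B}$ is a quasi-isomorphism.

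For part (ii), recall from the construction around Diagram \eqref{lfom} that $\wt\phi\colon\rho^*\pi^*\Omega_B\to\rho^*F^0$ is a lift of $\rho^*(\tau^{\ge-1}\phi)$, and that $F^0$ maps onto $\Omega_M$ via $\t^0$ (which in the normalized presentation is the identity onto $\Omega_M$ after replacing $L^0=F^0$). The composite $\rho^*\pi^*\Omega_B\xr{\wt\phi}\rho^*F^0\to\rho^*\Omega_M$ is, by construction and commutativity of \eqref{lfom}, the pullback $\rho^*h^0(\phi)$ of the cotangent-sheaf map. Situation B says precisely that $h^0(\phi)\colon\pi^*\Omega_B\to\Omega_M$ is fiberwise injective, i.e. $\Omega_B|_b\to\Omega_M|_m$ is injective for every closed point; since $\pi^*\Omega_B$ is a vector bundle, fiberwise injectivity of a bundle map into a coherent sheaf means it is injective with locally free cokernel, i.e. a subbundle inclusion. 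Pulling back by the flat (indeed smooth, surjective) map $\rho$ preserves this. Now $\wt\phi$ is a map of vector bundles whose composite with $\rho^*F^0\to\rho^*\Omega_M$ is a subbundle inclusion; hence $\wt\phi$ itself is fiberwise injective (if $\wt\phi(v)=0$ at a point then its image in $\rho^*\Omega_M$ is $0$, so $v=0$), and being a fiberwise-injective map of vector bundles it is an injection of vector bundles (subbundle inclusion).

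The only genuine subtlety — the "hard part" — is the bookkeeping in part (ii): one must be careful that the normalized presentation of the obstruction theory (where $\t^0$ is literally the identity map $F^0\twoheadrightarrow\Omega_M$, and the lift $\wt\phi$ lands in $\rho^*F^0$ rather than only being defined up to homotopy) really does make the composite $\rho^*\pi^*\Omega_B\to\rho^*\Omega_M$ equal to $\rho^* h^0(\phi)$ on the nose, so that Situation B applies verbatim. This is exactly what Diagram \eqref{lfom} encodes, so it is a matter of reading off commutativity rather than a new computation. In part (i), the one thing to verify is that $M_0$ is nonempty and dense, which follows from $M$ being integral together with the standing hypothesis that such a $B_0$ exists (if $M_0=\emptyset$ the statement "$\pi|_{M_0}$ is smooth" is vacuous but then one should interpret the hypothesis as also asserting $B_0$ meets the image of $\pi$; in the applications $\pi$ is dominant so this is automatic).
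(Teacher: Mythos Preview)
Your proposal is correct and follows essentially the same approach as the paper's own proof. In part (i) the paper argues exactly as you do---injectivity of $h^0(\phi)$ on the dense open $M_0$ plus torsion-freeness of the locally free sheaf $\pi^*\Omega_B$---and in part (ii) the paper also restricts to a closed point $m$ and uses that the composite $\pi^*\Omega_B|_m\to F^0|_m\to\Omega_M|_m$ equals $h^0(\phi)|_m$, which is injective by the hypothesis of Situation~B; you are in fact more explicit than the paper in spelling out the truncation consequence $\tau^{\le -1}\bb L_M\cong\tau^{\le -1}\bb L_{M/B}$ and in flagging the implicit assumption that $M_0\neq\emptyset$.
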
 
\begin{proof} (i) It suffices to show $h^0(\phi)$ is injective.
Since by assumption  $M_0$ is smooth over $B_0$, by \cite[Thm 25.1]{M}, \eqref{exact} restricted to $M_0$ is exact from the left i.e. $h^0(\phi)|_{M_0}$ is injective. On the other hand, $\pi^*\Omega_B$ is a vector bundle and so is in particular torsion free, $M_0\subset M$ is open and dense by the integrality condition, therefore $h^0(\phi)$ is injective.

(ii) Restricting the left triangle in Diagram \ref{lfom} to the point $m$ and using the assumption that the diagonal arrow is injective, we find that $\phi|_m:\pi^*\Omega_B|_m\to \Omega_{M}|_m$ is injective and hence the claim is proven.

\end{proof}

\subsection{Virtual localization} In this Section, we combine Theorem \ref{rest} and the virtual localization formula of \cite{GP} when both $M$ and $B$ are equipped with a $\bb C^*$-action such that the morphism $\pi\colon M\to B$ is $\bb C^*$-equivariant, and the fixed subschemes $B^{\bb C^*}$ and $M^{\bb C^*}$ are proper. For simplicity, assume that $B$ has only one (nonsingular) fixed component, $$i\colon C=B^{\bb C^*} \subset B.$$ 

\begin{lem} \label{MC} The fixed set $j\colon M^{\bb C^*}\subset M$ is mapped by $\pi$ to $C$, in other words $M^{\bb C^*}=(M_C)^{\bb C^*}$.\end{lem}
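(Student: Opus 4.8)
The plan is to prove the two assertions of Lemma \ref{MC} in turn, using only that $\pi$ is $\bb C^*$-equivariant and that $B^{\bb C^*}=C$. First I would show that $\pi$ carries $M^{\bb C^*}$ into $C$. Let $T=\bb C^*$ act on both schemes. For a $T$-scheme $Z$ there is a functorial description of $Z^T$ as representing the functor of $T$-fixed maps; concretely, a morphism $f\colon Y\to Z$ from a $T$-scheme $Y$ factors through $Z^T\hra Z$ if and only if $f$ is $T$-equivariant for the trivial action on... no — more simply: the inclusion $M^{\bb C^*}\hra M$ is $T$-equivariant with $T$ acting trivially on the source, so the composite $\pi\circ j\colon M^{\bb C^*}\to B$ is a $T$-equivariant morphism out of a scheme with trivial $T$-action. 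Since $\pi\circ j$ is equivariant and its source has trivial action, its image lands in the locus of $B$ on which $T$ acts trivially set-theoretically, and scheme-theoretically the universal property of the fixed locus $B^T=C$ (as the largest closed subscheme on which $T$ acts trivially, in the sense of \cite{GP}) gives a factorization $\pi\circ j\colon M^{\bb C^*}\to C\hra B$. This yields the first claim $\pi(M^{\bb C^*})\subseteq C$.

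Next I would deduce the identification $M^{\bb C^*}=(M_C)^{\bb C^*}$. Recall $M_C=C\times_B M$ with its induced $T$-action, and $T$ acts trivially on the factor $C$. By the previous paragraph, the closed immersion $j\colon M^{\bb C^*}\hra M$ factors as $M^{\bb C^*}\to M_C\to M$, where the first arrow is the map induced by $j$ and $\pi\circ j\colon M^{\bb C^*}\to C$; this first arrow is $T$-equivariant, and since $T$ acts trivially on $M^{\bb C^*}$, it factors through $(M_C)^{\bb C^*}\hra M_C$. Conversely, $(M_C)^{\bb C^*}\to M_C\to M$ is a $T$-equivariant map with trivial action on the source, hence factors through $M^{\bb C^*}\hra M$. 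These two factorizations are mutually inverse: one checks that composing them gives the identity on each side, because both composites are $T$-equivariant maps that agree after composing with the (separated) immersions $M^{\bb C^*}\hra M$ and $(M_C)^{\bb C^*}\hra M_C$ respectively, and a map into a subscheme is determined by its composite with the immersion. Hence $M^{\bb C^*}\cong (M_C)^{\bb C^*}$ compatibly with the inclusions into $M$, which is the asserted equality.

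The step I expect to be the main obstacle is the scheme-theoretic (rather than merely set-theoretic) statement in the first paragraph: that an equivariant morphism out of a scheme with trivial action factors through the fixed locus of the target. Set-theoretically this is immediate, but to get the factorization as schemes one must use the correct functorial characterization of the fixed-point subscheme — for a linearly reductive group (here $\bb C^*$) acting on a quasi-projective scheme, $Z^T$ represents the subfunctor of maps $Y\to Z$ that are $T$-equivariant for the trivial $T$-action on $Y$, cf. the discussion in \cite{GP}. Granting that characterization, all the factorizations above are formal, and the only mild care needed is in checking that the two constructed maps between $M^{\bb C^*}$ and $(M_C)^{\bb C^*}$ are inverse to each other, which follows from the separatedness of the fixed-locus immersions.
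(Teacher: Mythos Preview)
Your proposal is correct and follows essentially the same route as the paper: both arguments invoke the functorial characterization of the fixed-point subscheme (the paper cites Fogarty \cite{F1}, you cite \cite{GP}) to factor the equivariant map $\pi\circ j$ through $C=B^{\bb C^*}$. The paper stops after establishing $\pi(M^{\bb C^*})\subseteq C$ and leaves the identification $M^{\bb C^*}=(M_C)^{\bb C^*}$ implicit, whereas you spell it out via mutually inverse factorizations; this extra care is fine but not strictly necessary, since once $M^{\bb C^*}\subseteq M_C$ the equality of fixed loci is immediate from the same universal property applied to the closed $\bb C^*$-subscheme $M_C\subset M$.
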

\begin{proof}
By \cite[Thm 2.3]{F1} $M^{\bb C^*}$ is a subscheme with trivial $\bb C^*$-action representing the functor $F$ that sends a finite type $\bb C$-scheme $S$ with trivial $\bb C^*$-action to the set of $\bb C^*$-equivariant morphisms $S\to M$; moreover, $F$ sends a morphism $S'\to S$ of $\bb C$-schemes with trivial $\bb C^*$-actions and a $\bb C^*$-equivariant morphisms $S\to M$ to their composition $S'\to M$.  Replacing $M$ by $B$, same will be true for $B^{\bb C^*}$. 

This means any $\bb C^*$-equivariant morphisms $S\to M$ factors through $M^{\bb C^*}\hra M$. But then the composition  
$$S\to M^{\bb C^*}\hra M\xr{\pi} B$$ is also $\bb C^*$-equivariant so it must factor through $C=B^{\bb C^*} \hra B$. Thus, in particular,  scheme theoretic image $\pi(M^{\bb C^*})$ must be contained in $C$ proving the claim.
\end{proof}  

 By \cite{GP} the \emph{fixed part} of the obstruction theory $\t$ (introduced in \eqref{theta}), $$\t^{\op f} \colon F^{\bu, \op f}\To \bb L_{M^{\bb C^*}},$$ is a perfect obstruction theory and the resulting virtual cycle $ [M^{\bb C^*}]^{\vir}$ recovers $[M]^{\vir}$ by the virtual Atiyah-Bott localization formula
\beq{GP}[M]^{\vir}=j_*\frac{[M^{\bb C^*}]^{\vir}}{e\big((F^{\bu, \op m})^\vee\big)}\in A_*^{\bb C^*}(M)\otimes_{\bb Q[\bff s]} \bb Q[\bff s,\bff s^{-1}],\eeq where the superscript $\op m$ means the \emph{moving part}.

Denote the inclusion $M^{\bb C^*} \subset M_C$ by $j'$. We know that $M^{\bb C^*}\times_B C\cong M^{\bb C^*}$ by Lemma \ref{MC}, so by the self-intersection formula \cite[Cor 6.3]{Fu} and \cite[Thm 6.2]{Fu} (in combination with \cite{EG}), applying $i^!$ to both sides of \eqref{GP}, we get 
\beq{loc} [M_C]^{\un \vir}=j'_*\frac{[M^{\bb C^*}]^{\vir}}{e\big((F^{\bu, \op m})^{\vee}-\pi^*N_{C/B}\big)}\in A_*^{\bb C^*}(M_C)\otimes_{\bb Q[\bff s]} \bb Q[\bff s,\bff s^{-1}].\eeq

\section{Moduli space of sheaves with codimension 1 supports}
Let $X$ be a nonsingular projective variety of dimension $n$ with a polarization $\cc O(1)$. Fix a polynomial $P\in \bb Q[z]$ of degree $n-1$.
Let $$ M^{\op{ss}}=M^{\op{ss}}(X,P)$$ be the (coarse) moduli space of semistable sheaves with Hilbert polynomial $P$. By \cite[Thm 4.3.7]{HL} it \emph{universally co-represents} the moduli functor
$$\cc M^{\op{ss}}\colon (\op{Sch}/\bb C)^{\op{op}}\To \op{Sets}$$ that for  any finite type $\bb C$-scheme $S$, $\cc M^{\op{ss}}(S)$ is the set of $S$-flat families of Gieseker semistable sheaves with Hilbert polynomial $P$ modulo the equivalence relation \beq{equiv}\cc F\sim \cc F'\quad  \Leftrightarrow \quad \exists \;N\in \op{Pic}(S) \text{ s.t. } \cc F'\cong  \cc F \otimes p_2^* N.\eeq Moreover, for any morphism $g\colon S'\to S$ of finite type $\bb C$-schemes and any $\cc F \in \cc M^{\op{ss}}(S)$ $$\cc M^{\op{ss}}(g)(\cc F)=(\id \times g)^* \cc F.$$ $M^{\op{ss}}$ is a projective scheme and contains the moduli space of stable sheaves, denoted by \beq{mods} M^{\op s}=M^{\op s}(X,P), \eeq as an open subset. 

By \cite[Prop 2.1.10]{HL}, for any $\cc F\in \cc M^{\op{ss}}(S)$, the determinant $\det(\cc F) \in \op{Pic}(X\times S)$ is well-defined, and taking determinant commutes with basechange. As $M^{\op{ss}}$ co-represents the functor $\cc M^{\op{ss}}$, one gets a morphism 
$$\det \colon M^{\op{ss}}\To \op{Pic}(X).$$ Any pure sheaf $\cc F$ with Hilbert polynomial $P$ is supported in codimension 1, and $c_1(\cc F)=c_1(\det \cc F)$. If $D_1,\dots, D_r$ are irreducible components of the reduced support of $\cc F$ let $m_i$ be the length of stalk of $\cc F$ at the generic point of $D_i$. As defined in \cite{F, KM}, let $$\div(\cc F):=\sum{m_i}D_i$$ be the (Cartier) divisor associated to $\cc F$. This divisor has some nice properties, for example, in contrast to the scheme theoretic support of a coherent sheaf, it is well-behaved in flat families (see Lemma \ref{rho}) and also $\cc O(\div(\cc F))\cong \det(\cc F)$ and $c_1(\cc F)=\sum m_i [D_i]$.

Now fix a line bundle $L$ on $X$, and define $$M^{\op{ss}}_L:=\det^{-1}(L).$$ Because $M^{\op{ss}}$ universally co-represents $\cc M^{\op{ss}}$, $M^{\op{ss}}_L$ co-represents the subfunctor $\cc M^{\op{ss}}_L$ of $\cc M^{\op{ss}}$ that to any finite type $\bb C$-scheme $S$ assigns the set of $S$-flat families $\cc F$  of Gieseker semistable sheaves with Hilbert polynomial $P$ satisfying $\det \cc F= L \boxtimes N$ for some $N\in \pic(S)$, modulo the equivalence relation \eqref{equiv}. Let $M^{\op s}_L \subseteq M^{\op{ss}}_L$ be the open subset of stable sheaves.

\begin{lem} \label{rho}  Suppose that $\cc F$ is an $S$-flat family of pure codimension 1 coherent sheaves with fixed determinant $L$.  Then, there exists a morphism  $\rho \colon S\to |L|$ such that   $\rho(s)$ corresponds to $\op{div}(\cc F|_{X\times s})\in |L|$ for any point $s\in S$.
\end{lem}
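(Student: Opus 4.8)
The plan is to construct the morphism $\rho\colon S\to |L|$ by exhibiting the data that a map to $|L| = \mathbb{P}(H^0(X,L)^\vee)$ classifies, namely a line bundle on $S$ together with a section of its pullback-tensor-$L$ that cuts out a relative divisor. The divisor $\div(\cc F)$ is the natural candidate for this relative divisor, so the first step is to make sense of $\div(\cc F)$ in the family. Since $\cc F$ is $S$-flat and each fiber $\cc F|_{X\times s}$ is pure of codimension $1$, the associated-divisor construction of \cite{F, KM} applies relatively: one gets an effective Cartier divisor $\cc D\subset X\times S$, flat over $S$, whose fiber over $s$ is $\div(\cc F|_{X\times s})$, together with an isomorphism $\O_{X\times S}(\cc D)\cong \det(\cc F)$. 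This is exactly the good behavior in flat families alluded to just before the lemma; the key input is that forming $\div$ commutes with base change to fibers, which is the content of the cited references on divisors associated to coherent sheaves.

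Next I would use the hypothesis that the determinant is fixed: $\det(\cc F) = L\boxtimes N$ for some $N\in\pic(S)$ (this is what ``fixed determinant $L$'' means, and on a general $S$ one cannot drop the twist $N$). Combining with the previous step, $\O_{X\times S}(\cc D)\cong p_1^*L\otimes p_2^*N$, and the tautological section $1\in H^0(X\times S,\O(\cc D))$ becomes a section $\sigma\in H^0(X\times S,\, p_1^*L\otimes p_2^*N)$ whose zero locus is $\cc D$. Pushing forward along $p_2\colon X\times S\to S$ and using the projection formula, $\sigma$ corresponds to an element of $H^0(S,\, N\otimes p_{2*}p_1^*L) = \Hom_S(N^\vee,\, H^0(X,L)\otimes\O_S)$ (here $p_{2*}p_1^*L = H^0(X,L)\otimes\O_S$ by flat base change, $X$ being projective and $L$ having no higher cohomology issues affecting $H^0$). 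Because $\sigma$ restricted to each fiber $X\times s$ is a nonzero section (its zero locus is a genuine divisor, not all of $X$), the corresponding map $N^\vee\to H^0(X,L)\otimes\O_S$ is nowhere zero, i.e. a nowhere-vanishing bundle map from a line bundle to a trivial bundle of rank $h^0(X,L)$. Such a map is precisely the data of a morphism $\rho\colon S\to \mathbb{P}(H^0(X,L)^\vee) = |L|$ (with $N^\vee = \rho^*\O_{|L|}(-1)$), by the universal property of projective space. Unwinding the construction, $\rho(s)$ is the point of $|L|$ given by the line $\mathbb{C}\cdot\sigma|_s\subset H^0(X,L)$, whose associated divisor is $Z(\sigma|_s) = \cc D_s = \div(\cc F|_{X\times s})$, as required.

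The main obstacle I expect is the first step: verifying that $\div(\cc F)$ genuinely exists as an $S$-flat Cartier divisor on $X\times S$ with the correct fibers, rather than merely as a fiberwise collection of divisors. One must check that the local construction of the associated divisor (via the determinant of a locally free resolution, or via Fitting ideals / the norm of the annihilator) is compatible with the flatness of $\cc F$ over $S$ and commutes with restriction to fibers; purity of the fibers in codimension $1$ is what guarantees the output is an honest Cartier divisor and that no embedded or lower-dimensional contributions appear. Once this relative divisor and the isomorphism $\O(\cc D)\cong\det(\cc F)$ are in hand, the remainder is the standard ``section of a line bundle $\leadsto$ map to projective space'' dictionary together with cohomology-and-base-change for $p_{2*}p_1^*L$, which is routine. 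A minor point to be careful about is keeping track of the twist $N\in\pic(S)$ throughout, since it is genuinely needed when $\pic(S)\neq 0$ and it is what makes $N^\vee\cong\rho^*\O_{|L|}(-1)$ come out correctly.
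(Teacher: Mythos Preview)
Your proposal is correct and follows essentially the same approach as the paper. Both arguments first invoke \cite{F, KM} to produce the relative effective Cartier divisor $\div(\cc F)\subset X\times S$ with the correct restriction to fibers, and then appeal to the universal property of $|L|$; the only difference is that the paper cites \cite[Lem 7.3]{F} and \cite{Mu} explicitly for the base-change and flatness steps and then states the existence of the morphism in one line, whereas you spell out the section-of-a-line-bundle description of the map to $\bb P(H^0(X,L)^\vee)$ in detail.
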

\begin{proof}
By \cite{F, KM} $\op{div}(\cc F)\subset X\times S$ is an effective Cartier divisor and by \cite[Lem 7.3]{F} for any $s\in S$  \beq{div} \op{div}(\cc F|_{X\times s})=\op{div}(\cc F)|_{X\times s}.\eeq 
By \cite[Lecture 10 part 4]{Mu} \eqref{div} implies that $\op{div}(\cc F)\subset X\times S$ is flat over $S$. If $s\in S $ is a closed point $\cc O(\op{div}(\cc F)|_{X\times s})\cong L$. Therefore we get a canonical morphism to $|L|$ as desired.
%Consider the closed subscheme $$D:=V(\op{Fitt_0}(\cc F))\subset X\times S.$$ The projection $X\times S\to S$ induces a morphism $p\colon D\to S$. By \cite[Cor 20.5]{E} for any $s\in S$ $$p^{-1}(s)\cong V(\op{Fitt_0}(\cc F|_{X\times s}))$$. By \cite[Prop 20.7]{E} there exists a positive integer $n$ such that 
%$$(\op{ann}(\cc F|_{X\times s}))^n \subset \op{Fitt_0}(\cc F|_{X\times s})\subseteq \op{ann}(\cc F|_{X\times s})$$ 

\end{proof}

If $\cc F$ is $S$-flat family as in Lemma \ref{rho}, since $\cc F$ is a torsion sheaf on $X\times S$, from the construction of \cite{F, KM} $$\op{div}(\cc F)=\op{div}(\cc F\otimes N)$$ for any $N\in \op{Pic}(X\times S)$. In particular, by Lemma \ref{rho} we get a morphism of functors $\cc M^{\op{ss}}_L \to h_{|L|}$, which factors through a unique morphism $h_{M^{\op{ss}}_L}\to h_{|L|}$ as $M^{\op{ss}}_L$ co-represents $\cc M^{\op{ss}}_L$. By Yoneda lemma the last morphism of functors is equivalent to a morphism of schemes \beq{divm}\div \colon M^{\op{ss}}_L\To |L|.\eeq 
Let $\bb D  \subset X\times |L|$ be the universal divisor. And let $M^{\op{ss}}(\bb D/|L|,P)$ be the moduli space of semi-stable sheaves with Hilbert polynomial $P$ on the fibers of $\bb D\to |L|$ (\cite[Thm 4.3.7]{HL}). If $i\colon C\subset |L|$ is a nonsingular subvariety (or more generally a flat morphism onto a nonsingular subvariety of $|L|$), let $$M^{\op{ss}}_C(P):=M^{\op{ss}}_L(P)\times_{|L|} C,\qquad \bb D_C:=\bb D\times_{|L|} C.$$ 
Then we have an identification of moduli spaces.
\begin{lem} \label{D/L} We have an isomorphism of schemes $$M^{\op{ss}}(\bb D/|L|,P)\cong M_L^{\op{ss}}(P), \qquad M^{\op{ss}}(\bb D_C/C,P)\cong M_C^{\op{ss}}(P)$$

\end{lem}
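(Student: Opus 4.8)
The plan is to identify the two moduli \emph{functors} and then invoke the fact that $M^{\op{ss}}(\bb D/|L|,P)$ and $M^{\op{ss}}_L(P)$ each corepresent their functors. By \cite[Thm 4.3.7]{HL}, $M^{\op{ss}}(\bb D/|L|,P)$ universally corepresents the relative moduli functor $\cc M^{\op{ss}}(\bb D/|L|,P)$, which sends a finite type $\bb C$-scheme $S$ to the set of pairs $(\mu,\cc G)$ with $\mu\colon S\to|L|$ a morphism and $\cc G$ an $S$-flat family of Gieseker semistable sheaves with Hilbert polynomial $P$ on the fibers of $\bb D_S:=\bb D\times_{|L|}S\to S$, modulo the equivalence \eqref{equiv}; and, as recalled before Lemma \ref{rho}, $M^{\op{ss}}_L(P)$ corepresents $\cc M^{\op{ss}}_L$. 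Thus it suffices to produce a natural isomorphism $\cc M^{\op{ss}}_L\cong\cc M^{\op{ss}}(\bb D/|L|,P)$: the isomorphism of schemes then follows by Yoneda and uniqueness of the corepresenting object, and the version over $C$ follows by base change along $i\colon C\to|L|$, since the relative moduli space is formed compatibly with base change over $|L|$ (\cite[Thm 4.3.7]{HL}) while $M^{\op{ss}}_C(P)=M^{\op{ss}}_L(P)\times_{|L|}C$ by definition, the structure map to $|L|$ being $\div$ from \eqref{divm}. Restricting to the open loci of stable sheaves gives the analogous statements for $M^{\op s}$.

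The isomorphism of functors is built from the closed immersion $i\colon\bb D_S\hookrightarrow X\times S$. Given $\cc F\in\cc M^{\op{ss}}_L(S)$, the morphism $\mu\colon S\to|L|$ of Lemma \ref{rho} satisfies, by construction and \eqref{div}, $\div(\cc F)=(\id\times\mu)^*\bb D=\bb D_S$, a Cartier divisor flat over $S$; moreover $\cc F$ is canonically an $\O_{\bb D_S}$-module, so $\cc F=i_*\cc G$ with $\cc G:=i^*\cc F$ an $S$-flat family of semistable sheaves with Hilbert polynomial $P$ on the fibers of $\bb D_S/S$, giving $(\mu,\cc G)\in\cc M^{\op{ss}}(\bb D/|L|,P)(S)$. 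Conversely, given $(\mu,\cc G)$, the pushforward $i_*\cc G$ is $S$-flat on $X\times S$ with fibers $i_*(\cc G_s)$, which are pure of dimension $n-1$, semistable, of Hilbert polynomial $P$; and $\det(i_*\cc G)=\O(\div(i_*\cc G))$, where fiberwise $\div(i_*\cc G_s)=\bb D_s$: the multiplicity of $i_*\cc G_s$ along a component of $\bb D_s$ is the length of $\cc G_s$ at its generic point, the weighted sum of these lengths is read off from the top-degree term of $P$ and equals $\deg\bb D_s$, and this — together with the fact that $\cc G_s$ is a semistable, hence rank one torsion-free, sheaf on the Gorenstein hypersurface $\bb D_s$ — forces each such length to equal the multiplicity of the corresponding component in $\bb D_s$. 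Hence $\div(i_*\cc G)=\bb D_S$ and $\det(i_*\cc G)=\O(\bb D_S)=(\id\times\mu)^*\O_{X\times|L|}(\bb D)=L\boxtimes\mu^*\O_{|L|}(1)$, so $i_*\cc G\in\cc M^{\op{ss}}_L(S)$. The two transformations are mutually inverse: $i^*i_*=\id$ for a closed immersion, and $\cc F\to i_*i^*\cc F$ is an isomorphism exactly by the $\O_{\bb D_S}$-module property; compatibility with \eqref{equiv} is clear since $\div$ is insensitive to twisting a torsion sheaf by a line bundle.

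The essential inputs are the two structural facts used above: that a fixed-determinant family $\cc F$ is scheme-theoretically supported on $\bb D_S=\div(\cc F)$, and that a semistable sheaf on a fiber of $\bb D$ of Hilbert polynomial $P$ has support divisor equal to that whole fiber. Pointwise the first is elementary: a pure codimension $1$ sheaf $\cc F_s$ on the smooth $X$ with $\div(\cc F_s)=\sum m_iD_i$ is annihilated by $\cc I_{\div(\cc F_s)}$, as one checks by localizing at each associated prime $\eta_i$ of $\cc F_s$ (all of codimension $1$ by purity), where $\O_{X,\eta_i}$ is a discrete valuation ring whose uniformizer defines $D_i$ and $(\cc F_s)_{\eta_i}$ has length $m_i$, hence is killed by the $m_i$-th power of the uniformizer. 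Promoting this to flat families, and establishing the support-divisor statement for semistable sheaves on the (possibly singular, reducible or non-reduced) fibers $\bb D_s$, is where the construction and properties of $\op{div}$ in families from \cite{F,KM} do the real work — that theory is tailored precisely so that the universal torsion sheaf descends to its relative divisor with $\cc O(\div)\cong\det$ — and I expect this to be the main obstacle; the remaining verifications (preservation of $S$-flatness, purity, semistability and Hilbert polynomial under $i_*$ and $i^*$, and the identity $\cc O(\bb D)\cong L\boxtimes\O_{|L|}(1)$ for the universal divisor) are routine.
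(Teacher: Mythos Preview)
Your overall strategy coincides with the paper's: both argue by identifying the moduli functors, sending $\cc F\in\cc M^{\op{ss}}_L(S)$ to the pair $(\mu,\cc G)$ with $\mu$ from Lemma~\ref{rho} and $\cc G$ the restriction of $\cc F$ to $\bb D_S=\div(\cc F)$, and conversely pushing forward along $\bb D_S\hookrightarrow X\times S$; the statement over $C$ then follows by universal corepresentability and base change. The paper is terser --- it simply declares the two constructions ``evidently inverse of each other'' --- whereas you supply considerably more detail, notably the pointwise check that a pure codimension-one sheaf is scheme-theoretically supported on its associated divisor.

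The one step where your extra detail does not go through is the claim that a semistable $\cc G_s$ on a fiber $\bb D_s$ is ``hence rank one torsion-free,'' which you use to conclude $\div(i_*\cc G_s)=\bb D_s$. Semistability does not force rank one on reducible divisors. For instance, take $X=\bb P^2$, $L=\cc O(2d')$ with $d'\ge4$, $\bb D_s=D'+D''$ two distinct smooth degree-$d'$ curves, and let $\cc G_s$ be the pushforward to $\bb D_s$ of a stable rank-$2$ bundle of degree $2g'-1$ on $D'$ (with $g'$ the genus of $D'$): this $\cc G_s$ is \emph{stable} on $\bb D_s$ with Hilbert polynomial $2d'z+1$, yet $\div(i_*\cc G_s)=2D'\neq\bb D_s$, so pushing forward and then applying Lemma~\ref{rho} lands at $(2D',\cdot)$ rather than back at the original pair. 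Your instinct that this is ``the main obstacle'' is correct; the paper's proof passes over exactly the same point without comment.
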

\begin{proof} For the first isomorphism it suffices to identify the corresponding moduli functors. Let $S$ be a finite type $\bb C$-scheme. If $\cc F$ is an $S$-flat family of semistable sheaves with determinant $L$ and Hilbert polynomial $P$ on $X\times S$, as above $\div(\cc F) \subset X\times S$ is an $S$-flat effective Cartier divisor and hence determines a modular morphism $S\to |L|$. So there exists an $S$-flat family $\cc G$ of sheaves on $\div(\cc F) \cong S \times_{|L|} \bb D$ that pushes froward to $\cc F$.

Conversely, given a morphism $S\to |L|$ and an $S$-flat family $\cc G$ of sheaves with Hilbert polynomial $P$ on the fibers of  $S \times_{|L|} \bb D\to S$, the pushforward of $\cc G$ via the inclusion $$S \times_{|L|} \bb D\into{} S\times_{|L|} |L| \times X\cong S\times X$$ is an $S$-flat family of sheaves with determinant $L$ and Hilbert polynomial $P$ on $S\times X$. Finally, these two constructions are evidently inverse of each other.

The second isomorphism follows from the first one and the fact that either of moduli functors above is universally co-represented (\cite[Thm 4.3.7]{HL}).
\end{proof}

In particular, if $D \subset X$ is a divisor corresponding to a closed point $|L|$ then $$M^{\op{ss}}_D(P):=\div^{-1}(D)\cong M^{\op{ss}}(D,P)$$ the moduli space of semistable sheaves on $D$ with Hilbert polynomial $P$.
%co-represents the subfunctor $\cc M^{\op{ss}}_D$ of $\cc M^{\op{ss}}$ that to any finite type $\bb C$-scheme $S$ assigns the set of $S$-flat families $\cc F$  of Gieseker semistable sheaves with Hilbert polynomial $P$ satisfying $$\div \cc F= S\times D \subset S\times X,$$ modulo the equivalence relation \eqref{equiv}. Therefore, regarding $D$ as an abstract scheme over $\bb C$,  $M^{\op{ss}}_D(X,P)$ is identified with the moduli space of semistable sheaves  $M^{\op{ss}}(D,P)$ with Hilbert polynomial $P$. Let $M^{\op s}_D \subseteq M^{\op{ss}}_D$ be the open subset of stable sheaves.

\begin{lem} \label{lns} \begin{enumerate}[(i)]
\item Let $|L|_{\op{ir}}\subset |L|$ be the open locus of reduced and irreducible divisors. Then $\div^{-1}(|L|_{\op{ir}}) \subseteq M^{\op s}_L$. % Moreover, $\Hom(\cc F,\cc F\otimes N)\cong H^0(N)$ for any $\cc F \in \div^{-1}(|L|_{\op{ir}})$ and any line bundle $N$ on $D$.
 \item Let $|L|_{\op{ns}}\subset |L|$ be the open locus of nonsingular divisors. If $\dim X=2$ or $3$, then $\div|_{\div^{-1}(|L|_{\op{ns}})}$ is a smooth morphism.
\end{enumerate}
\end{lem}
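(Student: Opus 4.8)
The plan is to prove (i) by a purity-and-multiplicity argument, and (ii) by relative deformation theory on the universal family, using Lemma~\ref{D/L} to identify the fibers of $\div$.

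For (i), I would take $\cc F\in M^{\op{ss}}_L$ with $D:=\div(\cc F)\in|L|_{\op{ir}}$ and show $\cc F$ is in fact stable. Being semistable, $\cc F$ is pure of dimension $n-1$, and since $D$ is reduced it occurs with multiplicity one in $\div(\cc F)$, so $\cc F$ has length one at the generic point $\eta$ of $D$. Now for any $0\ne\cc G\subsetneq\cc F$: purity forces $\dim\cc G=n-1$, and since $\op{supp}\cc F=D$ is irreducible the top-dimensional part of $\op{supp}\cc G$ is all of $D$, so $\eta\in\op{supp}\cc G$, hence $\cc G_\eta=\cc F_\eta$ and $(\cc F/\cc G)_\eta=0$. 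Thus $\cc F/\cc G$ is supported in dimension $\le n-2$, so $P_{\cc G}=P_{\cc F}-P_{\cc F/\cc G}$ has the same leading coefficient as $P_{\cc F}$ but is strictly smaller for $z\gg0$; normalizing gives $p_{\cc G}(z)<p_{\cc F}(z)$ for $z\gg0$, which is the stability inequality. The only delicate point here is the routine bookkeeping with reduced Hilbert polynomials and dimensions.

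For (ii), I would first note that $\bb D_{\op{ns}}:=\bb D\times_{|L|}|L|_{\op{ns}}\to|L|_{\op{ns}}$ is a smooth projective morphism of relative dimension $n-1$ (a family of nonsingular hypersurfaces), so by Lemma~\ref{D/L} the morphism in question is the structure map $M^{\op s}(\bb D_{\op{ns}}/|L|_{\op{ns}},P)\to|L|_{\op{ns}}$ of the relative moduli space of stable sheaves on its fibers. Relative to its (smooth) base this moduli space carries the usual deformation theory of sheaves on the fibers: at a point $[\cc G]$ over $D$ the relative tangent space is $\Ext^1_D(\cc G,\cc G)$ and obstructions lie in $\Ext^2_D(\cc G,\cc G)$ (or its trace-free summand). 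Hence it is enough to show $\Ext^2_D(\cc G,\cc G)=0$ for all such pairs: then the relative deformation theory is unobstructed and, the morphism being of finite type, it is smooth (in particular flat). For $\dim X=2$, $D$ is a nonsingular curve and $\Ext^2_D$ vanishes for dimension reasons. For $\dim X=3$, $D$ is a nonsingular surface, and since the fixed Hilbert polynomial forces the relevant $\cc G$ to have rank one on $D$, it is a torsion-free rank-one sheaf on the smooth $D$; then $\hom_D(\cc G,\cc G)\cong\O_D$, and Serre duality gives $\Ext^2_D(\cc G,\cc G)\cong H^0(D,\omega_D)^\vee=H^2(D,\O_D)$, which by adjunction ($\omega_D\cong(\omega_X\otimes L)|_D$) vanishes in the cases of interest (e.g. $D\subset\bb P^3$ of degree $\le 3$).

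The main obstacle is this final vanishing $\Ext^2_D(\cc G,\cc G)=0$ in the surface-fiber case: it is where the precise structure of the occurring sheaves (their forced rank one, coming out of the analysis in (i)) and the positivity of $D$ inside $X$ enter, and it is the step that genuinely pins down the allowed $\dim X$. Everything else — the identification via Lemma~\ref{D/L}, the smoothness of $\bb D_{\op{ns}}\to|L|_{\op{ns}}$, and the passage from an unobstructed relative deformation theory to a smooth morphism — is standard.
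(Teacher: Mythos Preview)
Your argument for (i) is correct and is essentially a fleshed-out version of the paper's one-line observation that a pure sheaf with reduced irreducible $\div$ is a rank~1 torsion-free sheaf on its support, hence stable.

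For (ii), however, there is a genuine gap. Your plan hinges on the vanishing $\Ext^2_D(\cc G,\cc G)=0$, and you correctly compute $\Ext^2_D(\cc G,\cc G)\cong H^2(D,\O_D)$ via Serre duality and $\hom(\cc G,\cc G)\cong\O_D$. But the lemma is stated for an arbitrary nonsingular projective threefold $X$ and arbitrary $L$, so $D$ can be, say, a quartic $K3$ in $\bb P^3$ (a case the paper explicitly treats later), where $H^2(\O_D)\cong\bb C\neq 0$. Your parenthetical ``vanishes in the cases of interest (e.g.\ $D\subset\bb P^3$ of degree $\le 3$)'' is precisely where the argument breaks: it does not cover the full statement, and the obstruction space you want to kill is genuinely nonzero in general.

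The paper sidesteps this entirely. Rather than arguing via vanishing of obstructions, it identifies each fiber $\div^{-1}(D)$ geometrically: for $\dim X=2$ it is $\jac(D)$; for $\dim X=3$ it fibers smoothly over $\jac(D)$ with fibers $D^{[m]}$, and the Hilbert scheme of points on a smooth surface is always smooth (Fogarty), regardless of $p_g(D)$. Smoothness of every closed fiber, together with constancy of their dimension (giving flatness over the reduced base $|L|_{\op{ns}}$) and constancy of the rank of $\Omega_{M/|L|_{\op{ns}}}$, then yields smoothness of the morphism. Your deformation-theoretic route can in fact be salvaged along these lines: one checks $\dim\Ext^1_D(\cc G,\cc G)=q(D)+2m$ equals the actual fiber dimension, so the moduli space is smooth even though $\Ext^2\neq 0$; but that is not the argument you wrote.
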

\begin{proof}
(i) Suppose $\cc F$ is a pure sheaf with Hilbert polynomial $P$. If $D:=\div(\cc F)$ is reduced and irreducible then by definition $\cc F$ is the pushforward of a rank 1 torsion free sheaf on $D$ and hence is stable. This proves the first part. %We first show that $\hom(\cc F, \cc F)\cong \O_D$. 

(ii) Suppose $D\in |L|_{\op{ns}}$ is a closed point (corresponding to a nonsingular effective divisor). By the first part $\div^{-1}(D)$ is identified with the moduli space of rank 1  torsion free (hence stable) sheaves on $D$ with Hilbert polynomial $P$. If $\dim X=2$ then $D$ is a nonsingular curve and so $\div^{-1}(D)\cong \op{Jac}(D)$, which is nonsingular. If $\dim X=3$ then $D$ is a nonsingular surface and so $\div^{-1}(D)$ admits a smooth morphism to $\op{Jac}(D)$ with fibers isomorphic to $D^{[m]}$, the Hilbert scheme of $m$ points on $D$ for some integer $m\ge 0$ (depending on $P$). Therefore $\div^{-1}(D)$  is again nonsingular.

In any case the analysis above shows that $\div|_{\div^{-1}(|L|_{\op{ns}})}$ is flat (by comparing the Hilbert polynomials of fibers over closed points) and the relative cotangent sheaf $\Omega_{\div^{-1}(|L|_{\op{ns}})/|L|_{\op{ns}}}$ is locally free (as it has a constant rank over closed points). Therefore, $\div|_{\div^{-1}(|L|_{\op{ns}})}$ is a smooth morphism proving the second part of lemma.

\end{proof}

Let   $\Ext^{i}(\cc F,\cc F)_0:=\ker\big( \Ext^{i\ge 3}(\cc F,\cc F)\xr{\op{tr}}H^i(\cc O_X)\big)$. Suppose that \beq{trfr} \Ext^{i\ge 3}(\cc F,\cc F)_0=0 \qquad \forall \text{ closed point } \cc F \in M^{\op s}.\eeq
By \cite[Sec 4.4]{HT} $M^{\op s}$ is then equipped with a perfect obstruction theory \beq{pot}\t \colon \big(\tau^{[1,2]}R\hom_p(\bb F, \bb F )\big)^\vee [-1]\To \bb L_{M^{\op s}},\eeq where $p\colon M^{\op s}\times X\to M^{\op s}$ is the projection, $\bb F$ is the universal (twisted) sheaf, and $\t$ is obtained from the Atiyah class in $\Ext^1_{M^{\op s}\times X}(\bb F, \bb F \otimes \bb L_{M^{\op s}})$. 
Note that since $\op{rk}(\cc F)=0$ for $\cc F\in M^{\op s}$, one cannot use the method of \cite[Sec 4.2]{HT} to equip $M^{\op s}_L$  with a perfect obstruction theory. One of our goal is to do this by method of Section \ref{ar} (after passing to an affine bundle).

%To be able to integrate against the resulting virtual fundamental class, from now on we assume that the Hilbert polynomial $P$ is chosen such that $M:=M^{\op {ss}}=M^{\op {s}}.$ This means that $M$ is a projective scheme. 

Denote the virtual cycle of \eqref{pot} by $$[M^{\op s}]^{\vir}\in A_{\vd_{M^{\op s}}}(M^{\op s}), \qquad \vd_{M^{\op s}}:=\op{ext}^1(\cc F,\cc F)-\op{ext}^2(\cc F,\cc F).$$ By \cite{KM}, this virtual cycle is supported on the locus of $\cc F \in M^{\op s}$, where \beq{ext2}\Ext^{2}(\cc F,\cc F)\xr{\op{tr}}H^2(\cc O_X)\eeq fails to be surjective.

\subsection{Fixing determinant} \label{deter} As in Section \ref{ar},  fix an affine bundle $$\rho \colon \wt M^{\op s} \To M^{\op s}.$$ which is an affine scheme. Denote by $\wt M_L^{\op s}$ the restriction of $\wt M^{\op s}$ to $M^{\op s}_L$. Let $L$ be a line bundle $X$ and recall the definition $M^{\op s}_L= \det^{-1}(L)$. Denote by $\pic_L(X) \subset \pic(X)$ the connected component containing $L$. $\pic_L(X)$ is a nonsingular projective variety of dimension $h^{0,1}(X)$ with tangent bundle $T_{\pic_L(X)}\cong \cc O^{h^{0,1}(X)}$. Without loss of generality we replace $M^{\op s}$ by its open and closed subset mapping to $\pic_L(X)$. We can then apply Theorem \ref{rest} to the morphism $$\det \colon M^{\op s}\to \pic_L(X)$$ and $i:\{L\} \hra \pic_L(X)$:
\begin{prop} \label{det}
$\wt M^{\op s}_L$ is equipped with a perfect obstruction theory with the K-theory class of virtual tangent bundle $$-\rho^*\tau^{[1,2]}R\hom_p(\bb F, \bb F )+T_{\wt M^{\op s}_L/M^{\op s}_L}-\cc O^{h^{0,1}(X)}.$$ The virtual cycle $[M^{\op s}_L]^{\un \vir} \in A_{\vd_{M^{\op s}_L}}(M^{\op s}_L)$ (Definition \ref{virc}) satisfies
\ba [ M^{\op s}_L]^{\un \vir}&= i^![M^{\op s}]^{\vir}=\big \{c\big(R\hom_p(\bb F, \bb F )\big)\cap c_F(M^{\op s}_L)\big \}_{\vd_{M^{\op s}_L}},\ea where $\vd_{M^{\op s}_L}:=\vd_{M^{\op s}}-h^{0,1}(X)$. 
\end{prop}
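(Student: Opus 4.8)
The plan is to obtain this proposition as the special case of Theorem~\ref{rest}(i) and Definition~\ref{virc} applied to $M=M^{\op s}$ with its perfect obstruction theory $\t$ from \eqref{pot} (which exists under \eqref{trfr}), to the base $B=\pic_L(X)$, the morphism $\pi=\det\colon M^{\op s}\to\pic_L(X)$, and the closed embedding $i\colon\{L\}\hra\pic_L(X)$. The first thing I would record is the geometric input recalled in Section~\ref{deter}: $\pic_L(X)$ is a nonsingular projective variety of dimension $h^{0,1}(X)$ with $T_{\pic_L(X)}\cong\O^{h^{0,1}(X)}$. Consequently $\{L\}\hra\pic_L(X)$ is a regular embedding of codimension $h^{0,1}(X)$ whose normal bundle $N_{\{L\}/\pic_L(X)}$ is trivial, and its pullback to $M^{\op s}_L$ along $\det$, and further to $\wt M^{\op s}_L$ along $\rho$, is the trivial bundle $\O^{h^{0,1}(X)}$; in particular it has trivial total Chern class. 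With $C=\{L\}$ we have $M_C=M^{\op s}_L$, $\wt M_C=\wt M^{\op s}_L$, and $\vd_{M_C}=\vd_{M^{\op s}}-\dim\pic_L(X)=\vd_{M^{\op s}}-h^{0,1}(X)$, which is the asserted virtual dimension.

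Next I would feed this into Theorem~\ref{rest}(i). It equips $\wt M_C=\wt M^{\op s}_L$ with a $\{L\}$-relative perfect obstruction theory, and since $\{L\}$ is a reduced point this relative theory is simply an absolute perfect obstruction theory on $\wt M^{\op s}_L$, giving the first assertion of the proposition. The formula in the proof of Theorem~\ref{rest} identifies its virtual tangent bundle in $K$-theory as $\rho^*(F^\bu)^\vee+T_{\wt M^{\op s}_L/M^{\op s}_L}-\rho^*\det^*N_{\{L\}/\pic_L(X)}$, with $F^\bu=\big(\tau^{[1,2]}R\hom_p(\bb F,\bb F)\big)^\vee[-1]$ from \eqref{pot}. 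Since dualizing a complex reverses shifts, $(F^\bu)^\vee\cong\big(\tau^{[1,2]}R\hom_p(\bb F,\bb F)\big)[1]$, hence $(F^\bu)^\vee=-\tau^{[1,2]}R\hom_p(\bb F,\bb F)$ in $K$-theory; combined with the triviality of $\det^*N_{\{L\}/\pic_L(X)}$ this is exactly the stated class $-\rho^*\tau^{[1,2]}R\hom_p(\bb F,\bb F)+T_{\wt M^{\op s}_L/M^{\op s}_L}-\O^{h^{0,1}(X)}$.

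For the cycle identities, $[M^{\op s}_L]^{\un\vir}=i^![M^{\op s}]^{\vir}$ is Definition~\ref{virc} together with the first equality in Theorem~\ref{rest}(i). For the Siebert-type formula, the second equality in Theorem~\ref{rest}(i) reads
\[
[M^{\op s}_L]^{\un\vir}=\big\{c\big(-(F^\bu)^\vee+\det^*N_{\{L\}/\pic_L(X)}\big)\cap c_F(M^{\op s}_L)\big\}_{\vd_{M^{\op s}_L}},
\]
and since $\det^*N_{\{L\}/\pic_L(X)}$ is trivial while $-(F^\bu)^\vee=\tau^{[1,2]}R\hom_p(\bb F,\bb F)$ in $K$-theory, the Chern class inside is $c\big(\tau^{[1,2]}R\hom_p(\bb F,\bb F)\big)$. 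It remains to replace this truncation by the full complex. By stability $\tau^{\le 0}R\hom_p(\bb F,\bb F)\cong\O_{M^{\op s}}$, and for $X$ a nonsingular projective surface or a Fano threefold one has $\Ext^{\ge 3}(\cc F,\cc F)=0$ for all stable $\cc F$ (automatic on a surface; on a Fano threefold $\Ext^{\ge4}$ vanishes by dimension and $\Ext^3$ injects via the trace into $H^3(\O_X)=0$, using \eqref{trfr}), so $\tau^{\ge3}R\hom_p(\bb F,\bb F)=0$. Hence $R\hom_p(\bb F,\bb F)-\tau^{[1,2]}R\hom_p(\bb F,\bb F)=\O_{M^{\op s}}$ in $K$-theory, a class of trivial bundles with trivial total Chern class, so $c\big(\tau^{[1,2]}R\hom_p(\bb F,\bb F)\big)=c\big(R\hom_p(\bb F,\bb F)\big)$; this gives the last formula and completes the argument.

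I do not expect a genuine difficulty here: structurally the proposition is a corollary of Theorem~\ref{rest}, and the reason no normal-bundle correction survives in the final formulas is precisely that $\pic_L(X)$ has trivial tangent bundle. The two places wanting a little care are the sign bookkeeping for $(F^\bu)^\vee$ — one inner dual and one shift — and the passage between $\tau^{[1,2]}R\hom_p(\bb F,\bb F)$ and $R\hom_p(\bb F,\bb F)$ inside the total Chern class, which rests on stability (for $\tau^{\le 0}$) and on \eqref{trfr} and the low dimension of $X$ (for $\tau^{\ge 3}$); the latter is cleanest exactly in the surface and Fano-threefold cases, which are the ones used in the applications.
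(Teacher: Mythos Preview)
Your proposal is correct and follows exactly the paper's approach: the paper's proof is simply ``\qed'' after stating that one applies Theorem~\ref{rest} to $\det\colon M^{\op s}\to\pic_L(X)$ and $i\colon\{L\}\hra\pic_L(X)$, and you have written out precisely that application. Your added justification for replacing $c\big(\tau^{[1,2]}R\hom_p(\bb F,\bb F)\big)$ by $c\big(R\hom_p(\bb F,\bb F)\big)$---via $\hom_p(\bb F,\bb F)\cong\O$ and the vanishing of $\ext^{\ge 3}_p$ in the surface and Fano-threefold cases---is the natural way to fill in what the paper leaves implicit, and your closing remark that this step is cleanest exactly in those cases (the only ones used in the applications) is apt.
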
 \qed

\subsection{Fixing support} \label{supor} We use the notation fixed in Section \ref{det}. The linear system $|L|$ is a  projective space of dimension $h^0(L)-1$ and the K-theory class of tangent bundle $T_{|L|}=\cc O_{|L|}(1)^{h^{0}(L)}-\cc O$.  Let $D\in |L|$ be a closed point. This time, we apply Theorem \ref{rest} to the morphism $$\div \colon M^{\op s}_L\to |L|$$ and $i_{|L|}:\{D\} \hra |L|$ and use Proposition \ref{det}. Let $M^{\op s}_D:=\div^{-1}(D)$ and $\wt M^{\op s}_D$ be the restriction of the affine bundle $\wt M^{\op s}$ to $M^{\op s}_D$.
\begin{prop} \label{supp}
$\wt M^{\op s}_D$ is equipped with a perfect obstruction theory with the K-theory class of virtual tangent bundle $$-\rho^*\tau^{[1,2]}R\hom_p(\bb F, \bb F )+T_{\wt M^{\op s}_D/M^{\op s}_D}-\cc O^{h^{0,1}(X)-1}-\cc O_{|L|}(1)^{h^{0}(L)}.$$ The virtual cycle $[M^{\op s}_D]^{\un \vir}$ (Definition \ref{virc}) satisfies
\ba [M^{\op s}_D]^{\un \vir}&= i_{|L|}^![M^{\op s}_L]^{\vir}=\big \{c\big(R\hom_p(\bb F, \bb F )\big) \cap c_F(M^{\op s}_D)\big \}_{\vd_{M^{\op s}_D}},\ea where $\vd_{M^{\op s}_D}:=\vd_{M^{\op s}}-h^{0,1}(X)-h^0(L)+1$. 
\end{prop}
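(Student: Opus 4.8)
The plan is to chain a second application of Theorem \ref{rest} onto Proposition \ref{det}, now with the morphism $\div$ in place of $\det$. I will feed in the perfect obstruction theory that Proposition \ref{det} puts on the affine scheme $\wt M^{\op s}_L$, and apply Theorem \ref{rest} with ``$M$'' $=\wt M^{\op s}_L$, ``$B$'' $=|L|$, structure morphism ``$\pi$'' $=\div\circ\rho\colon\wt M^{\op s}_L\to M^{\op s}_L\to|L|$, and ``$C$'' $=\{D\}$. The one thing to pin down before invoking the theorem is the auxiliary affine bundle in the construction of Section \ref{ar}: since $\wt M^{\op s}$ was chosen to be an affine scheme, its closed subscheme $\wt M^{\op s}_L=\rho^{-1}(M^{\op s}_L)$ is affine, so in the construction applied to $\wt M^{\op s}_L\to|L|$ that affine bundle may be taken to be the identity. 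Consequently no further affine bundle is introduced, the ``$\wt M_C$'' of Theorem \ref{rest} is literally $\wt M^{\op s}_D:=\rho^{-1}(M^{\op s}_D)=\wt M^{\op s}_L\times_{|L|}\{D\}$, the restriction of the same affine bundle $\rho$, and the ``$\rho$'' occurring in the conclusion of Theorem \ref{rest} is trivial.

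Granting this, Theorem \ref{rest}(i) directly gives $\wt M^{\op s}_D$ an absolute perfect obstruction theory whose virtual tangent bundle in K-theory is the restriction to $\wt M^{\op s}_D$ of the virtual tangent bundle of $\wt M^{\op s}_L$ computed in Proposition \ref{det}, minus $\pi^*N_{\{D\}/|L|}$. To bring this into the stated form I will rewrite $-\pi^*N_{\{D\}/|L|}$ using the Euler sequence on $|L|\cong\bb P^{h^0(L)-1}$, which gives $N_{\{D\}/|L|}=T_{|L|}|_D$ with K-class $[\cc O_{|L|}(1)^{h^0(L)}]-[\cc O]$; adding $-\cc O_{|L|}(1)^{h^0(L)}+\cc O$ to the expression of Proposition \ref{det}, using $T_{\wt M^{\op s}_L/M^{\op s}_L}|_{\wt M^{\op s}_D}\cong T_{\wt M^{\op s}_D/M^{\op s}_D}$, and combining the trivial summands $-\cc O^{h^{0,1}(X)}+\cc O=-\cc O^{h^{0,1}(X)-1}$ produces the displayed virtual tangent bundle, with $\cc O_{|L|}(1)$ understood as its (trivial) pullback to $\wt M^{\op s}_D$. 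The virtual dimension accordingly drops from $\vd_{M^{\op s}_L}=\vd_{M^{\op s}}-h^{0,1}(X)$ by $\dim|L|=h^0(L)-1$, giving the asserted $\vd_{M^{\op s}_D}$.

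For the cycle identities I will use that $(\rho^*)^{-1}$ commutes with the refined Gysin map $i_{|L|}^!$ (\cite[Thm 6.2]{Fu}) and that the second assertion of Theorem \ref{rest}, with trivial affine bundle, gives $i_{|L|}^![\wt M^{\op s}_L]^{\vir}=[\wt M^{\op s}_D]^{\vir}$; hence, by Definition \ref{virc} and Proposition \ref{det},
$$[M^{\op s}_D]^{\un \vir}=(\rho^*)^{-1}[\wt M^{\op s}_D]^{\vir}=i_{|L|}^!(\rho^*)^{-1}[\wt M^{\op s}_L]^{\vir}=i_{|L|}^![M^{\op s}_L]^{\un \vir}.$$
The Siebert type formula then follows by substituting the virtual tangent bundle into the Chern class formula of Theorem \ref{rest}: exactly as in the proof of that theorem the term $T_{\wt M^{\op s}_D/M^{\op s}_D}$ cancels against the Jouanolou contribution to $c_F(\wt M^{\op s}_D)$, leaving $c_F(M^{\op s}_D)$ once $(\rho^*)^{-1}$ is applied; the summands $\cc O^{h^{0,1}(X)-1}$ and $\cc O_{|L|}(1)^{h^0(L)}$ are (after pullback) trivial bundles, hence contribute $1$ to the total Chern class; and $\tau^{[1,2]}R\hom_p(\bb F,\bb F)$ differs in K-theory from $R\hom_p(\bb F,\bb F)$ only by the classes of $\hom_p(\bb F,\bb F)$, which is a line bundle trivialized by the identity section (cohomology and base change together with stability), and of $\ext^i_p(\bb F,\bb F)$ for $i\ge 3$, which vanish by \eqref{trfr} and $H^i(\cc O_X)=0$ for $i\ge 3$ (automatic on a surface, Kodaira vanishing on a Fano threefold); so the two have the same total Chern class. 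This yields $\{c(R\hom_p(\bb F,\bb F))\cap c_F(M^{\op s}_D)\}_{\vd_{M^{\op s}_D}}$.

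I expect the only genuinely non-routine point to be the legitimacy of the \emph{iteration} just described: that the perfect obstruction theory manufactured in Proposition \ref{det} on $\wt M^{\op s}_L$ is a bona fide one to which Theorem \ref{rest} applies verbatim, and that because $\wt M^{\op s}_L$ is already affine one must not introduce a second auxiliary affine bundle (were one to do so, Definition \ref{virc} would still make the resulting cycle well defined, but the Chern class bookkeeping of the previous paragraph would have to be redone on the resulting tower of affine bundles). Everything downstream is a direct chase through the formulas already established in Theorem \ref{rest} and Proposition \ref{det}.
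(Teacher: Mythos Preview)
Your proposal is correct and follows essentially the same approach as the paper. The paper's own proof is just a \qed\ preceded by the sentence ``This time, we apply Theorem \ref{rest} to the morphism $\div\colon M^{\op s}_L\to|L|$ and $i_{|L|}\colon\{D\}\hookrightarrow|L|$ and use Proposition \ref{det}''; you have carefully unpacked exactly this, including the key observation that since $\wt M^{\op s}_L$ is already affine the auxiliary affine bundle in the second application of Section \ref{ar} may be taken to be the identity, so that the p.o.t.\ lands on $\wt M^{\op s}_D$ itself and Definition \ref{virc} is invoked with the original $\rho$.
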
 \qed

\subsection{Dimensions 2 and 3}
Our main case of interest is when $\dim X=2$ or $3$. When dimension is 2 Condition \eqref{trfr} is always satisfied, and for dimension  3 it is satisfied for example when the Hilbert polynomial of $K_X$ is less than the Hilbert polynomial of $\O_X$ (e.g. when $X$ is Fano).

Recall by Lemma \ref{lns} if $D\in |L|_{\op{ir}}$ then  $$M^{\op s}_D\cong M^{\op s}(D,P)$$ is identified with the moduli space of rank 1 torsion free sheaves on $D$, which is  a projective schemes (regardless $M^{\op s}$ is projective or not, because every rank 1 torsion free sheaf is automatically stable). When $D\in |L|_{\op{ns}}$, as shown in Lemma \ref{lns} $M^{\op s}(D,P)$, is smooth, and we will shortly compare the virtual class $[M^{\op s}_D]^{\vir}$ of Proposition \ref{supp} with the fundamental class $[M^{\op s}(D,P)]$. If $D\not \in |L|_{\op{ns}}$ much less is known about $M^{\op s}(D,P)$: for example, when $\dim X=2$ it is known that $M^{\op s}(D,P)$ (which is a compactified Jacobian) is irreducible \cite{AIK}, and when $\dim X=3$  it is only known (to our knowledge) that Hilbert schemes of points on a surface with rational double point singularities (which is an instance of $M^{\op s}(D,P)$) is irreducible \cite{Z}.

\begin{lem} \label{Rhoms} Let $i \colon \cc D \subset X\times S$ be a Cartier divisor flat over $S$,  $\cc E$ and $\cc G$ be two coherent sheaves on $\cc D$ also flat over $S$. Let $p\colon X\times S\to S$ be the projection and $p'=p \circ i\colon \cc D\to S$. Then the perfect complex $R\hom_p(i_*\cc E, i_* \cc G)$ fits in an exact triangle as follows 
$$R\hom_{p'}(\cc E,  \cc G)\to R\hom_p(i_*\cc E, i_* \cc G)\to R\hom_{p'}(\cc E, \cc G(\cc D))[-1].$$
\end{lem}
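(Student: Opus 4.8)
The plan is to resolve the sheaf $i_*\O_{\cc D}$ on $X\times S$ and apply $R\hom_p(-,i_*\cc G)$ termwise. The starting point is the Koszul-type resolution of $\O_{\cc D}$: since $\cc D\subset X\times S$ is a Cartier divisor, we have the short exact sequence
\beq{koszulD}0\to \O_{X\times S}(-\cc D)\to \O_{X\times S}\to i_*\O_{\cc D}\to 0\eeq
on $X\times S$. Tensoring with $i_*\cc E$ and using the projection formula (and the fact that $\cc D$ is Cartier, so $\O(-\cc D)$ is a line bundle and $\mathrm{Tor}$'s vanish appropriately on the divisor), one wants to express $i_*\cc E$ itself as the cone of a two-term complex of sheaves on $X\times S$ that are pushed forward from $\cc D$. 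Concretely, I would first reduce to the case $\cc E=\O_{\cc D}$: the general case follows by the same argument with $\cc E$ carried along, since all the sheaves in sight are flat over $S$ and supported on $\cc D$, so the functors $R\hom_p(i_*(-),i_*\cc G)$ commute with the relevant base changes and the triangle is natural.

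First I would apply $R\hom_{X\times S}(-, i_*\cc G)$ to \eqref{koszulD} (thought of as a triangle in $D(X\times S)$), obtaining the exact triangle
$$R\hom_{X\times S}(i_*\O_{\cc D},i_*\cc G)\to R\hom_{X\times S}(\O_{X\times S},i_*\cc G)\to R\hom_{X\times S}(\O_{X\times S}(-\cc D),i_*\cc G).$$
The middle term is just $i_*\cc G$. The right-hand term is $i_*\cc G(\cc D)$ (twisting by the line bundle $\O(\cc D)$ and restricting to $\cc D$). For the left-hand term, the key computation is $R\hom_{X\times S}(i_*\O_{\cc D},i_*\cc G)$: using the resolution \eqref{koszulD} of $i_*\O_{\cc D}$ in the \emph{first} variable, this is the two-term complex $[\, i_*\cc G \to i_*\cc G(\cc D)\,]$ with the map induced by the inclusion $\O(-\cc D)\hookrightarrow \O$; equivalently $R\hom_{X\times S}(i_*\O_{\cc D},i_*\cc G)\cong \cone\big(i_*\cc G\to i_*\cc G(\cc D)\big)[-1]$, and since the restriction of $\O(\cc D)$ to $\cc D$ is the normal bundle, this map is zero and the complex splits: $R\hom_{X\times S}(i_*\O_{\cc D},i_*\cc G)\cong i_*\cc G\oplus i_*\cc G(\cc D)[-1]$. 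Now I would push forward along $p$: applying $Rp_*$ to the triangle, and using $Rp_*\circ i_* = Rp'_*$ together with the definition of $R\hom_{p'}$, all three terms become the claimed $R\hom_{p'}(\cc E,\cc G)$, $R\hom_p(i_*\cc E,i_*\cc G)$, $R\hom_{p'}(\cc E,\cc G(\cc D))[-1]$ (here the shift and the placement of $\cc D$ match because the outer triangle rotates so that the $\ext$-into-the-divisor contribution lands in the stated degree). Flatness of $\cc D,\cc E,\cc G$ over $S$ guarantees all these relative $R\hom$'s are perfect complexes, so the statement makes sense and commutes with base change on $S$.

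I expect the main obstacle to be bookkeeping of degrees and twists rather than anything conceptually deep: one must be careful whether the "extra" term is $R\hom_{p'}(\cc E,\cc G(\cc D))$ shifted by $[-1]$ or by $[+1]$, and whether it is $\cc G(\cc D)$ or $\cc G(-\cc D)$ (i.e. which of $\cc E,\cc G$ gets the normal-bundle twist and on which side). The cleanest way to pin this down is to work locally where $\cc D=\{t=0\}$ is cut out by a non-zero-divisor $t$, so $i_*\O_{\cc D}$ has the free resolution $\O\xr{t}\O$; then $R\hom(i_*\O_{\cc D},-)$ is computed by the dual complex $\O\xr{t}\O$ sitting in degrees $0,1$, and restricting to $\cc D$ makes the map $t$ act as $0$, yielding the split form with $\cc G(\cc D)[-1]$ exactly as stated. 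Everything else — naturality in $\cc E$, perfectness, the identification $Rp_*i_*=Rp'_*$ — is formal. I would also remark that the map $R\hom_{p'}(\cc E,\cc G)\to R\hom_p(i_*\cc E,i_*\cc G)$ in the triangle is the natural one induced by $i_*$, which is what one uses in applications.
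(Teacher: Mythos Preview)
Your approach is the same as the paper's: start from the Koszul resolution $0\to\O(-\cc D)\to\O\to i_*\O_{\cc D}\to 0$, compute $R\hom_{X\times S}(i_*\O_{\cc D},i_*\cc G)\cong i_*\cc G\oplus i_*\cc G(\cc D)[-1]$, and push forward by $p$. The paper in fact does this computation for any locally free $E$ on $\cc D$ in place of $\O_{\cc D}$, obtaining $h^0=i_*\hom(E,\cc G)$, $h^1=i_*\hom(E,\cc G(\cc D))$, and $h^j=0$ otherwise.

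The one genuine gap in your write-up is the passage to general $\cc E$. Saying ``the general case follows by the same argument with $\cc E$ carried along'' skips exactly the place where the work is: the Koszul sequence resolves $i_*\O_{\cc D}$, not $i_*\cc E$, and the triangle you get from it has $R\hom(i_*\O_{\cc D},i_*\cc G)$ as its \emph{first} term rather than its middle term. What is actually needed is the truncation triangle $\tau^{\le 0}\to R\hom(i_*\cc E,i_*\cc G)\to \tau^{\ge 1}$ and an identification of the two pieces. The paper supplies this by taking a locally free resolution $E^\bu\to\cc E$ on $\cc D$ and an injective resolution $i_*\cc G\to I^\bu$ on $X\times S$, forming the double complex $\hom(i_*E^\bu,I^\bu)$, and using the locally free computation above to see that each column has cohomology only in degrees $0$ and $1$; truncating columnwise then yields the desired exact triangle before pushing forward. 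An alternative, slightly slicker, route is adjunction: $R\hom_{X\times S}(i_*\cc E,i_*\cc G)\cong i_*R\hom_{\cc D}(Li^*i_*\cc E,\cc G)$ together with the standard triangle $\cc E(-\cc D)[1]\to Li^*i_*\cc E\to\cc E$ gives the statement for arbitrary $\cc E$ in one stroke. Either way, this step is the content of the lemma and should not be elided.
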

\begin{proof} This is a relative version of \cite[Lem 3.24]{T}, we will adapt the same proof here. Applying the functor $R\hom(-,i_*\cc G)$ to the natural exact sequence $0\to \cc O(-\cc D)\to \O \to i_*\O_{\cc D}\to 0$ one sees that $$R\hom(i_*\O_{\cc D}, i_* \cc G)\cong i_* \cc G\oplus i_* \cc G(\cc D)[-1]. $$ One then concludes for any locally free sheaf $E$ on $\cc D$ \beq{RhE} h^{j} \big(R\hom (i_*E, i_* \cc G)\big )\cong \begin{cases} i_* \hom(E,\cc G) & j=0 \\ i_*\hom (E,\cc G(\cc D)) & j=1\end{cases},\eeq and $h^{j} \big(R\hom (i_*E, i_* \cc G)\big )=0$ if $j\neq 0, 1$.

Now let $E^\bu \to \cc E\to 0$ be a locally free resolution on $\cc D$, and $0\to i_*\cc G\to I^\bu$ be an injective resolution on $X\times S$. The total complex of the double complex $\hom(i_*E^\bu, I^\bu)$ is quasi-isomorphic to $R\hom (i_*\cc E, i_* \cc G)$. By \eqref{RhE}, for each $j$, one can replace $\hom(i_*E^j, I^\bu)$ by its truncation $\tau^{\le 1}$ to get a quasi-isomorphic double complex $B^{j,\bu}$ with two columns fitting in an exact sequence of complexes as \beq{ihom} 0\to i_*\hom(E^j,\cc G) \to B^{j,0}\to B^{j,1}\to  i_*\hom(E^j,\cc G(\cc D))\to 0.\eeq Now  $\hom(E^j,\cc G)$, $\hom(E^j,\cc G(\cc D))$  and the total complex of $B^{j,\bu}$ are quasi-isomorphic to $i_*R\hom(\cc E,  \cc G)$,   $i_*R\hom(\cc E, \cc G(\cc D))$ and $R\hom(i_*\cc E, i_* \cc G)$, respectively. Therefore \eqref{ihom} implies an exact triangle $$i_*R\hom(\cc E,  \cc G)\to R\hom(i_*\cc E, i_* \cc G)\to i_*R\hom(\cc E, \cc G(\cc D))[-1]$$ to which we can apply $R p_*$ and so get the result.
\end{proof}

%\begin{proof} This is a relative version \cite[Lem 3.24]{T}, we will adapt the same proof here. Applying the functor $R\hom(-,i_*\cc G)$ to the natural exact sequence $0\to \cc O(-\cc D)\to \O \to i_*\O_{\cc D}\to 0$ one sees that $$R\hom(i_*\O_{\cc D}, i_* \cc G)\cong i_* \cc G\oplus i_* \cc G(\cc D)[-1]. $$ One the concludes that if $E$ is a very negative vector bundle on  $\cc D$ (pulled back from $X\times B$), all relevant higher direct images $R^{k\ge 1}p_*$ and $R^{k\ge 1}p'_*$ vanish, and hence \beq{RhE} h^{j} \big(R\hom_p(i_*E, i_* \cc G)\big )\cong \begin{cases} \hom_{p'}(E,\cc G) & j=0 \\ \hom_{p'}(E,\cc G(\cc D)) & j=1\end{cases},\eeq and 0 if $j\neq 0, 1$.
%
%Now let $E^\bu \to \cc E\to 0$ be a locally free resolution on $\cc D$ with each $E^i$ very negative as above, and $0\to i_*\cc G\to I^\bu$ be an injective resolution on $X\times S$. The total complex of the double complex $p_* \hom(i_*E^\bu, I^\bu)$ is quasi-isomorphic to $R\hom_p(i_*\cc E, i_* \cc G)$. By \eqref{RhE}, for each $j$, one can replace $p_* \hom(i_*E^j, I^\bu)$ by its truncation $\tau^{\le 1}$ to get a quasi-isomorphic double complex $B^{j,\bu}$ with two columns fitting in the exact sequence $$0\to \hom_{p'}(E^j,\cc G) \to B^{j,0}\to B^{j,1}\to  \hom_{p'}(E^j,\cc G(\cc D))\to 0.$$ Now  $\hom_{p'}(E^j,\cc G)$, $\hom_{p'}(E^j,\cc G(\cc D))$  and the total complex of $B^{j,\bu}$ are quasi-isomorphic to $R\hom_{p'}(\cc E,  \cc G)$,   $R\hom_{p'}(\cc E, \cc G(\cc D))$ and $R\hom_p(i_*\cc E, i_* \cc G)$, respectively, and so we get the result.
%\end{proof}

In Lemma \eqref{Rhoms} if we set $\cc E=\cc G$ the trace map \beq{tr3}\ext^3_p(i_* \cc G,i_*\cc G)\xr{\tr} R^3p_*\O\eeq lifts to $\tau^{\ge 1}R\hom_p(i_* \cc G,i_*\cc G)  \to R^3p_*\O[-3]$. The first arrow of the exact triangle in Lemma \ref{Rhoms} then gives a map denoted by  $$f \colon \tau^{\ge 1} R\hom_{p'}( \cc G,\cc G) \to R^3p_*\O[-3].$$

\begin{cor}\label{conf}
In the situation of Lemma \ref{Rhoms}, suppose in addition that $\cc G=\cc E$, and for each closed point $s\in S$\quad  $\cc G|_s$ is stable and $\Ext^{3}(i_*\cc G,i_*\cc G)_0=0$ (cf. \eqref{trfr}). Then $\tau^{[1,2]} R\hom_p(i_*\cc G, i_* \cc G)$ fits in an exact triangle as follows 
$$\op{Cone}(f)[-1] \to \tau^{[1,2]} R\hom_p(i_*\cc G, i_* \cc G)\to R\hom_{p'}(\cc G, \cc G(\cc D))[-1].$$
\end{cor}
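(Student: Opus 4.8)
The plan is to start from the exact triangle of Lemma \ref{Rhoms} with $\cc E=\cc G$,
$$R\hom_{p'}(\cc G,\cc G)\to R\hom_p(i_*\cc G,i_*\cc G)\to R\hom_{p'}(\cc G,\cc G(\cc D))[-1],$$
and apply the truncation functor $\tau^{[1,2]}$ to the middle term, analyzing how the outer terms contribute. First I would examine the long exact sequence of cohomology sheaves of this triangle. On the fibers over a closed point $s\in S$, $\cc G|_s$ is a rank $1$ torsion free sheaf on the (possibly singular) divisor $D=\cc D|_s$, so $\Hom(\cc G|_s,\cc G|_s)=\bb C$ and the stability hypothesis together with the vanishing $\Ext^3(i_*\cc G,i_*\cc G)_0=0$ lets me control the degree $0$ and degree $3$ cohomology. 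The point is that $R\hom_{p'}(\cc G,\cc G)$ has cohomology in degrees $[0,\dim D]=[0,n-1]$, while $R\hom_{p'}(\cc G,\cc G(\cc D))[-1]$ has cohomology in degrees $[1,n]$; so the truncation $\tau^{[1,2]}$ of the middle term only sees the degree $1,2$ parts, and I need to peel off precisely the degree $0$ contribution (which comes from $R\hom_{p'}(\cc G,\cc G)$, essentially $\O_S$ by stability) and the degree $3$ contribution (which, by the map $f$, comes from $R\hom_{p'}(\cc G,\cc G(\cc D))$ coupled to $R^3p_*\O$).

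Concretely, I would proceed as follows. Combine the triangle of Lemma \ref{Rhoms} with the fact that, after applying $\tau^{\ge 1}$, the composite
$$\tau^{\ge 1}R\hom_{p'}(\cc G,\cc G(\cc D))[-1]\to \tau^{\ge 1}R\hom_p(i_*\cc G,i_*\cc G)\xrightarrow{\ \tr\ } R^3p_*\O[-3]$$
(which after a shift is the map $f$) identifies the ``top'' part. More precisely, rotate the triangle to write
$$\tau^{\ge 1}R\hom_{p'}(\cc G,\cc G)\to \tau^{\ge 1}R\hom_p(i_*\cc G,i_*\cc G)\to \tau^{\ge 1}R\hom_{p'}(\cc G,\cc G(\cc D))[-1]\xrightarrow{+1},$$
where I have used that stability forces $h^0\big(R\hom_{p'}(\cc G,\cc G)\big)$ and $h^0\big(R\hom_p(i_*\cc G,i_*\cc G)\big)$ to agree (both are line bundles coming from the scalar automorphisms, matched under the natural map), so that truncating $\tau^{\ge 1}$ on both of the first two terms is compatible. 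Then I would further truncate by $\tau^{\le 2}$: the term $\tau^{[1,2]}R\hom_{p'}(\cc G,\cc G)$ — note $R\hom_{p'}(\cc G,\cc G)$ vanishes in degrees $\ge 3$ on fibers except for the trace part in degree $n$ which is killed by $\Ext^{\ge 3}(i_*\cc G,i_*\cc G)_0=0$ via the triangle — so in fact $\tau^{\ge 1}R\hom_{p'}(\cc G,\cc G)\cong\cone(f)[-1]$ once the degree $n$ piece is accounted for. Assembling these, the triangle
$$\cone(f)[-1]\to \tau^{[1,2]}R\hom_p(i_*\cc G,i_*\cc G)\to R\hom_{p'}(\cc G,\cc G(\cc D))[-1]$$
drops out, where I must also check that $R\hom_{p'}(\cc G,\cc G(\cc D))[-1]$ genuinely has cohomology only in the window $[1,2]$ after this process, i.e. that its degree $0$ part vanishes — which holds because $\Hom(\cc G|_s,\cc G|_s(\cc D|_s))$ would violate stability (the twist $\cc G(\cc D)$ has strictly larger Hilbert polynomial), and its top degree $n$ part, for $n=3$, is exactly the piece glued via $f$.

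The main obstacle I expect is the bookkeeping of the two truncations interacting: I need to justify carefully that $\tau^{[1,2]}$ applied to the middle term of Lemma \ref{Rhoms}'s triangle commutes appropriately with the triangle, which is not automatic since truncation is not exact. The clean way around this is to pass to an explicit two-term (or short) complex representative as in the construction of \eqref{pot}, or equivalently to argue on cohomology sheaves using the hypotheses (stability kills $h^0$ of the $\cc G$-to-$\cc G(\cc D)$ Hom, $\Ext^{\ge 3}(i_*\cc G,i_*\cc G)_0=0$ kills the relevant top cohomology, and base change) and then invoke that a perfect complex with prescribed cohomology amplitude is determined up to the ambiguity recorded by $\cone(f)$. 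A secondary subtlety is the definition of $f$ itself: one must check the trace map \eqref{tr3} really does factor through $\tau^{\ge 1}R\hom_{p'}(\cc G,\cc G)$ via the first map of the Lemma \ref{Rhoms} triangle, which is where the hypothesis $\Ext^3(i_*\cc G,i_*\cc G)_0=0$ is used to see that the trace on $i_*\cc G$ is ``carried'' by the $\cc G(\cc D)$-summand rather than the $\cc G$-summand. Granting that, the corollary follows by rotating and splicing the two triangles.
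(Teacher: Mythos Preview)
Your overall two-step plan---apply $\tau^{\ge 1}$ to the triangle of Lemma \ref{Rhoms} using stability to match the $h^0$ terms, then remove the top-degree contribution via the trace---is exactly the paper's strategy. But two points in your execution are wrong and should be corrected.

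First, the claim that $\Hom(\cc G|_s,\cc G|_s(\cc D|_s))=0$ ``by stability'' is false: stability forbids nonzero maps to sheaves of \emph{smaller} reduced Hilbert polynomial, whereas $\cc G(\cc D)$ has strictly larger one, and indeed $\hom_{p'}(\cc G,\cc G(\cc D))$ is typically nonzero (cf.\ Lemma \ref{normal} and the long exact sequence in Proposition \ref{long}). Fortunately you do not need this vanishing at all: the term $R\hom_{p'}(\cc G,\cc G(\cc D))[-1]$ appears in the final triangle \emph{unmodified}, and it is allowed to have cohomology outside $[1,2]$.

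Second, the assertion $\tau^{\ge 1}R\hom_{p'}(\cc G,\cc G)\cong\cone(f)[-1]$ is incorrect as stated; these differ by $R^3p_*\O[-3]$ via the very triangle defining $\cone(f)$. The paper sidesteps the ``does $\tau^{\le 2}$ preserve triangles'' issue you flag by never applying $\tau^{\le 2}$. After your first step one has the commutative square
$$\xymatrix{\tau^{\ge 1}R\hom_{p'}(\cc G,\cc G)\ar[r]\ar[d]^-{f}&\tau^{\ge 1}R\hom_p(i_*\cc G,i_*\cc G)\ar[d]\\ R^3p_*\O[-3]\ar@{=}[r]&R^3p_*\O[-3],}$$
and, by \cite{HT}, $\tau^{[1,2]}R\hom_p(i_*\cc G,i_*\cc G)$ is \emph{defined} as the co-cone of the right column, not as a truncation. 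Taking co-cones of both columns (the co-cone of the identity being zero) yields the desired triangle directly, with $R\hom_{p'}(\cc G,\cc G(\cc D))[-1]$ carried over unchanged from the top row. Note also that the hypothesis $\Ext^3(i_*\cc G,i_*\cc G)_0=0$ is what makes $\tau^{[1,2]}R\hom_p(i_*\cc G,i_*\cc G)$ perfect of amplitude $[1,2]$; it is not needed to define $f$, which is simply the composition of the first arrow in Lemma \ref{Rhoms} with the lifted trace.
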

\begin{proof} By  \cite[Sec 4.4]{HT},  $\tau^{[1,2]} R\hom_p(i_*\cc G, i_* \cc G)$ is a 2-term complex of perfect amplitude $[1,2]$ which can be obtained as follows: first, $$\tau^{\ge 1} R\hom_p(i_*\cc G, i_* \cc G) \cong \op{Cone} \big(\O_S \xr{\id}   R\hom_p(i_*\cc G, i_* \cc G)\big),$$ the trace map \eqref{tr3} lifts to $$\tau^{\ge 1} R\hom_p(i_*\cc G, i_* \cc G) \to R^3p_* \O[-3],$$ and $\tau^{[1,2]} R\hom_p(i_*\cc G, i_* \cc G)$ is the co-cone of this map. We will now apply these two operations to the exact triangle in Lemma \ref{Rhoms} in the same order. First, by the stability of $\cc G|_s$, the $h^0$ part of degree 0 part of the exact triangle is  $$\hom_{p'}(\cc G,\cc G)\cong \hom_{p}(i_*\cc G,i_*\cc G) \cong \O_S.$$ Thus, applying $\tau^{\ge 1}$ we get an exact triangle as in the first row of the commutative diagram $$\xymatrix{\tau^{\ge 1} R\hom_{p'}(\cc G,  \cc G) \ar[r] \ar[d]^-{f} & \tau^{\ge 1} R\hom_p(i_*\cc G, i_* \cc G)\ar[r] \ar[d] & R\hom_{p'}(\cc G, \cc G(\cc D))[-1] \\ R^3p_*\O \ar@{=}[r] & R^3p_*\O.}$$ Now we get the result by taking co-cone over the rows in this diagram. 
\end{proof}

For simplicity, we will assume that the universal sheaf $\bb F$ of $X\times M^{\op s}$ exists \footnote{Without this assumption, what follows will still work after a basechange to an \'etale cover of $M^{\op s}$.}, and let $\bb D:=\div(\bb F)\into{i} X\times M^{\op s}$. Then $\bb F:=i_* \bb G$ where $\bb G$ is a sheaf on $\bb D$ flat over $ M^{\op s}$. 
\begin{prop} \label{long}
The tangent/obstruction sheaves of  the p.o.t \eqref{pot},  $$\ext^1_{p}(\bb F,\bb F), \quad \ext^2_{p}(\bb F,\bb F),$$ fit into the following exact sequence.
\ba 
0\to &\ext^1_{p'}(\bb G,\bb G)\to \ext^1_{p}(\bb F,\bb F)\to \hom_{p'}(\bb G,\bb G(\bb D))\to 
 \ext^2_{p'}(\bb G,\bb G) \to\\& \ext^2_{p}(\bb F,\bb F) \to \ext^1_{p'}(\bb G,\bb G(\bb D))\to \ext^3_{p'}(\bb G,\bb G)\to \O_{M^{\op s}}^{h^{0,3}(X)}\to 0.
\ea
\end{prop}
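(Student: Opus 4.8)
The plan is to extract the stated eight-term exact sequence from the two exact triangles we already have in hand. The key input is Corollary \ref{conf} applied to the universal situation $S=M^{\op s}$, $\cc D=\bb D$, $\cc G=\bb G$, $\cc E=\bb F=i_*\bb G$, and $p'=p\circ i\colon \bb D\to M^{\op s}$; the hypotheses of that corollary hold because every point of $M^{\op s}$ parametrizes a stable sheaf and \eqref{trfr} is assumed. This gives the exact triangle
$$\op{Cone}(f)[-1]\To \tau^{[1,2]}R\hom_p(\bb F,\bb F)\To R\hom_{p'}(\bb G,\bb G(\bb D))[-1],$$
where $f\colon \tau^{\ge 1}R\hom_{p'}(\bb G,\bb G)\to R^3p_*\O[-3]$. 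The tangent--obstruction sheaves of \eqref{pot} are $h^1$ and $h^2$ of $\tau^{[1,2]}R\hom_p(\bb F,\bb F)$ (equivalently, of $R\hom_p(\bb F,\bb F)$ in those degrees), so I would take the long exact cohomology sequence of this triangle. The first step is therefore to compute the cohomology sheaves of the two outer terms.

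For the right-hand term, $h^j$ of $R\hom_{p'}(\bb G,\bb G(\bb D))[-1]$ is $\ext^{j-1}_{p'}(\bb G,\bb G(\bb D))$, so it contributes $\hom_{p'}(\bb G,\bb G(\bb D))$ in degree $1$, $\ext^1_{p'}(\bb G,\bb G(\bb D))$ in degree $2$, and $\ext^2_{p'}(\bb G,\bb G(\bb D))$ in degree $3$ (this last term will be killed once we truncate appropriately — see below). For $\op{Cone}(f)[-1]$, I would first identify $\tau^{\ge 1}R\hom_{p'}(\bb G,\bb G)$: by stability of $\bb G|_s$, $h^0R\hom_{p'}(\bb G,\bb G)\cong\O_{M^{\op s}}$ and $\tau^{\ge 1}R\hom_{p'}(\bb G,\bb G)$ has $h^j=\ext^j_{p'}(\bb G,\bb G)$ for $j=1,2,3$ and $0$ otherwise (using that $\bb D$ is a divisor in the smooth threefold $X$, so $\bb G$ has homological dimension $\le 1$ on fibers and $R\hom_{p'}$ is supported in the right range). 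The map $f$ is a quasi-isomorphism in degrees $1,2$ onto the zero sheaf and in degree $3$ it becomes $\ext^3_{p'}(\bb G,\bb G)\xr{\tr} R^3p_*\O\cong\O^{h^{0,3}(X)}_{M^{\op s}}$. Hence $\op{Cone}(f)[-1]$ has $h^j=\ext^{j}_{p'}(\bb G,\bb G)$ for $j=1,2$, and $h^3\op{Cone}(f)[-1]$ sits in $0\to h^3\op{Cone}(f)[-1]\to \ext^3_{p'}(\bb G,\bb G)\xr{\tr}\O^{h^{0,3}(X)}_{M^{\op s}}\to 0$ because the trace map is surjective (it is split, coming from the inclusion $\O\to R\hom$). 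I also need to note $h^0\op{Cone}(f)[-1]=0$.

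Feeding these into the long exact sequence of the triangle, and using $h^0$ of the middle term vanishes (again stability), I obtain
$$0\to\ext^1_{p'}(\bb G,\bb G)\to\ext^1_p(\bb F,\bb F)\to\hom_{p'}(\bb G,\bb G(\bb D))\to\ext^2_{p'}(\bb G,\bb G)\to\ext^2_p(\bb F,\bb F)\to\ext^1_{p'}(\bb G,\bb G(\bb D))\to h^3\op{Cone}(f)[-1]\to h^3\big(\tau^{[1,2]}R\hom_p(\bb F,\bb F)\big)\to\cdots,$$
and since truncation kills $h^3$ of $\tau^{[1,2]}R\hom_p(\bb F,\bb F)$, the map $\ext^1_{p'}(\bb G,\bb G(\bb D))\to h^3\op{Cone}(f)[-1]$ is surjective; splicing in the short exact sequence for $h^3\op{Cone}(f)[-1]$ then yields exactly the displayed eight-term sequence ending in $\ext^3_{p'}(\bb G,\bb G)\to\O^{h^{0,3}(X)}_{M^{\op s}}\to 0$. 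The main obstacle I anticipate is the careful bookkeeping of the two truncations — first the passage from $R\hom_p(\bb F,\bb F)$ to $\tau^{[1,2]}$ as recalled in Corollary \ref{conf}, and then checking that $R\hom_{p'}(\bb G,\bb G)$ and $R\hom_{p'}(\bb G,\bb G(\bb D))$ really are concentrated in the claimed degrees (which uses that $\bb D\subset X$ with $X$ smooth of dimension $3$, so the fibers of $\bb D$ are Gorenstein of dimension $2$ and Serre duality pins down the top cohomology) — rather than any deep new idea; once the cohomology sheaves of the three terms of the triangle are correctly identified, the sequence drops out of the associated long exact sequence together with the splitting of the trace.
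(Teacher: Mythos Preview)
Your approach is essentially identical to the paper's: both take the long exact cohomology sequence of the triangle in Corollary~\ref{conf}, identify $h^k$ of $\op{Cone}(f)[-1]$ with $\ext^k_{p'}(\bb G,\bb G)$ for $k=1,2$ and with $\ker\!\big(\ext^3_{p'}(\bb G,\bb G)\to R^3p_*\O\big)$ for $k=3$, and then splice in the defining short exact sequence of that kernel. One small caveat: your parenthetical justification that $\ext^3_{p'}(\bb G,\bb G)\xr{\tr}\O^{h^{0,3}}$ is surjective because it is ``split, coming from the inclusion $\O\to R\hom$'' is not quite right---the composition of the identity section with the trace is multiplication by the rank, which is zero for these torsion sheaves---but in the paper's applications (surfaces, Fano threefolds) one has $h^{0,3}(X)=0$, so the issue is vacuous there and the paper does not dwell on it either.
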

\begin{proof}
This is the cohomology long exact sequence of the exact triangle in Corollary \ref{conf}, by noting that $h^{k}\big(\op{Cone}(f)\big)\cong  \ext^k_{p'}(\bb G,\bb G)$ for $k=1,2$ and $$h^{3}\big(\op{Cone}(f)\big)\cong \ker\big(\ext^3_{p'}(\bb G,\bb G)\to R^3p_* \O\big).$$
\end{proof}

The complexes  $R\hom_{p'}(\bb G, \bb G)$,  $R\hom_{p'}(\bb G, \bb G(\bb D))$ are not necessarily perfect as $p'\colon \cc D\to M^{\op s}$ is not necessarily smooth. So they may not define elements of $K^{0}(M^{\op s})$, the K-group of locally free sheaves on $M^{\op s}$. However, Proposition \ref{long} shows that 
$$\tau^{[1,3]} R\hom_{p'}(\bb G, \bb G)-\tau^{[0,1]}R\hom_{p'}(\bb G, \bb G(\bb D))  \in K^{0}(M^{\op s}).$$

For any closed point $D \in |L|$, let $\bb G_D$ be the restriction of $\bb G$ to $$j\colon D\times M^{\op s}_D \into{} \bb D.$$ We denote the projection $D\times M^{\op s}_D\to M^{\op s}_D$ also by $p'$. By basechange property of $R\hom_{p'}$ and using the fact that $$Lj^*\big(\tau^{[1,2]} R\hom_p(i_*\bb G, i_* \bb G)\big)$$ is a perfect complex, after applying $Lj^*$ to the exact triangle in Corollary \ref{conf} and taking long exact sequence in cohomology as in Proposition \ref{long}, we see that 
$$ \tau^{[1,3]} R\hom_{p'}(\bb G_D, \bb G_D)-\tau^{[0,1]}R\hom_{p'}(\bb G_D, \bb G_D( D))  \in K^{0}(M^{\op s}_D).$$
Using the identification $M^{\op s}_D\cong M^{\op s}(D,P)$, this leads us to the following definition:
\begin{defn} \label{KGD} Suppose $D$ is an effective divisor in a nonsingular projective surface or Fano threefold $X$ and $P$ is a polynomial of degree $\dim D$. Let $\bb G_D$ be a universal sheaf over $D\times M^{\op s}(D,P)$.  Define a K-theory class in $K^0(M^{\op s}(D,P))$ by $$\sff K(\bb G_D):=\tau^{[1,3]} R\hom_{p'}(\bb G_D, \bb G_D)-\tau^{[0,1]}R\hom_{p'}(\bb G_D, \bb G_D(D)).$$ 
\end{defn}

%$$Lj^*\big(\op{Cone}(f)\big)[-1] \to Lj^*\big(\tau^{[1,2]} R\hom_p(i_*\bb G, i_* \bb G)\big)\to R\hom_{p'}(\bb G_{ D}, \bb G_D(\cc D))[-1].$$

\begin{cor} \label{GD} Let $\sff K(\bb G_D)$ be as in Definition \ref{KGD}. The virtual cycle of Proposition \ref{supp} is given by 
\ba [M^{\op s}(D,P)]^{\un \vir}=\big \{c\big(&\sff K(\bb G_D) \big) \cap c_F\big(M^{\op s}(D,P)\big)\big \}_{\vd_{M^{\op s}(D,P)}}.
\ea In particular, $[M^{\op s}(D,P)]^{\un \vir}$ only depends on scheme structure of $M^{\op s}(D,P)$, its universal sheaf $\bb G_D$, and the normal bundle $\O_D(D)$ of $D$ in $X$.
%\Big \{c\big(-\ext^1_{p'}(\bb G_D,\bb G_D)+\ext^2_{p'}(\bb G_D,\bb G_D)-\ext^3_{p'}(\bb G_D,\bb G_D)\\ &-\hom_{p'}(\bb G_D,\bb G_D(\bb D))+\ext^1_{p'}(\bb G_D,\bb G_D(\bb D))+\cc O_{|L|}(1)^{h^{0}(L)}\big) \cap c_F(M^{\op s}_D)\Big \}_{\vd_{M^{\op s}_D}}\ea 
\end{cor}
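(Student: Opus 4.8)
The plan is to rewrite the right hand side of the formula already proved in Proposition~\ref{supp}. That proposition gives $[M^{\op s}(D,P)]^{\un\vir}=\{c(R\hom_p(\bb F,\bb F))\cap c_F(M^{\op s}_D)\}_{\vd_{M^{\op s}_D}}$, where $R\hom_p(\bb F,\bb F)$ is implicitly restricted to $M^{\op s}_D$; the other summands of the virtual tangent bundle listed there are trivial bundles on $M^{\op s}_D$ — the classes $\cc O^{h^{0,1}(X)-1}$ and $\cc O_{|L|}(1)^{h^0(L)}|_D\cong\cc O^{h^0(L)}$ — hence contribute total Chern class $1$, while the affine bundle direction $T_{\wt M^{\op s}_D/M^{\op s}_D}$ has already been absorbed into $c_F(M^{\op s}_D)$ by Theorem~\ref{rest}. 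Since $M^{\op s}_D\cong M^{\op s}(D,P)$ (Lemma~\ref{D/L}), it suffices to establish the identity of operational total Chern classes $c(R\hom_p(\bb F,\bb F)|_{M^{\op s}_D})=c(\sff K(\bb G_D))$ on $M^{\op s}(D,P)$; as the total Chern class is multiplicative in exact triangles, and so factors through $K^0$, it is enough to match the two underlying K-theory classes.

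First I would restrict the universal data. Over $X\times M^{\op s}_D$ the universal divisor is $D\times M^{\op s}_D$ (every sheaf parametrized over $M^{\op s}_D$ has divisor $D$), and $\bb F|_{X\times M^{\op s}_D}\cong (j_X)_*\bb G_D=:\bb F_D$, with $j_X\colon D\times M^{\op s}_D\hra X\times M^{\op s}_D$ the inclusion and $\bb G_D$ as in Definition~\ref{KGD}. By flatness of $\bb F$ over $M^{\op s}$, the two-term vector bundle complex $\tau^{[1,2]}R\hom_p(\bb F,\bb F)$ restricts to $\tau^{[1,2]}R\hom_{p_D}(\bb F_D,\bb F_D)$ ($p_D\colon X\times M^{\op s}_D\to M^{\op s}_D$ the projection; this base change compatibility is the one recorded just before Definition~\ref{KGD}), and since the remaining truncations of $R\hom_p(\bb F,\bb F)$ are trivial — $\tau^{\le 0}=\O$ by stability, and $\tau^{\ge 3}=0$ (automatic for $\dim X=2$, and for $X$ a Fano threefold because \eqref{trfr} and $H^3(\O_X)=0$ together kill $\ext^3_p(\bb F,\bb F)$) — we get $c(R\hom_p(\bb F,\bb F)|_{M^{\op s}_D})=c(\tau^{[1,2]}R\hom_{p_D}(\bb F_D,\bb F_D))$. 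Next I would apply Corollary~\ref{conf} over the base $S=M^{\op s}_D$ with $\cc D=D\times M^{\op s}_D$ and $\cc G=\cc E=\bb G_D$; its hypotheses hold because $\bb G_D|_s$ is stable at every closed point $s\in M^{\op s}_D$ and $\Ext^3_X((j_X)_*(\bb G_D|_s),(j_X)_*(\bb G_D|_s))_0=0$ is exactly~\eqref{trfr}. Taking the cohomology long exact sequence of the resulting exact triangle exactly as in the proof of Proposition~\ref{long} — now over $M^{\op s}_D$ and with $h^{0,3}(X)=0$ — and setting to zero the alternating sum of the K-theory classes of its terms (using that $\hom_{p'}(\bb G_D,\bb G_D)=\O$ and $\hom_{p_D}(\bb F_D,\bb F_D)=\O$ cancel), one obtains in $K^0(M^{\op s}(D,P))$
$$[\tau^{[1,2]}R\hom_{p_D}(\bb F_D,\bb F_D)]=[\tau^{[1,3]}R\hom_{p'}(\bb G_D,\bb G_D)]-[\tau^{[0,1]}R\hom_{p'}(\bb G_D,\bb G_D(D))]=\sff K(\bb G_D),$$
which together with the previous step proves the displayed Chern class identity, hence the formula.

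For the final assertion I would just read off which data enter the formula: $M^{\op s}(D,P)$ and its universal sheaf $\bb G_D$ are intrinsic to $(D,P)$ and do not see the embedding $D\subset X$; the class $\sff K(\bb G_D)$ is built from $\bb G_D$ and the line bundle $\O_D(D)$ alone (Definition~\ref{KGD}); $c_F(M^{\op s}(D,P))$ depends only on the scheme; and the grading is intrinsic as well, since from $0\to\O_X\to\O_X(D)\to\O_D(D)\to 0$ and $h^{0,1}(X)=0$ (valid for $X=\bb P^2$ and every Fano threefold) one gets $h^0(\O_X(D))=1+h^0(\O_D(D))$, so by Proposition~\ref{supp} $\vd_{M^{\op s}(D,P)}=\vd_{M^{\op s}}-h^{0,1}(X)-h^0(\O_X(D))+1=-\op{rk}\sff K(\bb G_D)-h^0(\O_D(D))$. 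The one genuinely delicate step is the base change identification used in the second paragraph: since $M^{\op s}_D$ may be singular and not regularly embedded in $M^{\op s}$, one needs that restricting $\tau^{[1,2]}R\hom_p(\bb F,\bb F)$ to $M^{\op s}_D$ agrees with the corresponding truncated $R\hom$ of the restricted sheaves $\bb F_D$. This is precisely the compatibility invoked (without comment) in the paragraph preceding Definition~\ref{KGD}; it rests on the flatness of $\bb F$ over $M^{\op s}$ and on the vector bundle realization of $\tau^{[1,2]}R\hom_p(\bb F,\bb F)$ from \cite{HT}, and I would reuse that argument verbatim. Everything else is Whitney's formula and bookkeeping with the long exact sequence of Proposition~\ref{long}.
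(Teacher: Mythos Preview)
Your proof is correct and follows essentially the same route as the paper, which simply cites Proposition~\ref{supp} and the exact sequence of Proposition~\ref{long}; you have carefully unpacked that one-line argument by applying Corollary~\ref{conf} directly over $S=M^{\op s}_D$ (exactly the restricted version discussed in the paragraph before Definition~\ref{KGD}) and reading off the resulting K-theory identity. One small caveat: your verification that $\vd_{M^{\op s}(D,P)}$ is intrinsic uses $h^{0,1}(X)=0$, which you correctly note holds for $\bb P^2$ and Fano threefolds, but Definition~\ref{KGD} nominally allows arbitrary surfaces; this is a harmless restriction since all of the paper's applications satisfy it, and the paper does not spell out this step either.
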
 
\begin{proof}
This follows from Proposition \ref{supp} and the exact sequence in Proposition \ref{long}. \end{proof}

The following Lemma can be used to simplify the formula in Corollary \ref{GD} in some cases. We will see an instance of this application in Section \ref{3d}.
\begin{lem} \label{normal}
\begin{enumerate}[i)]
\item Let $D$ be a normal Gorenstein surface, and $\cc G$ be a rank 1 torsion free sheaf on $D$. Then $\hom(\cc G, \cc G)\cong \O_D$.
\item Let $D$ be an integral Gorenstein curve,  and $\cc G$ be a rank 1 torsion free sheaf on $D$. Then $\hom(\cc G, \cc G)\cong \cc O_D(E)$ is a rank 1 reflexive sheaf given by an effective generalized divisor $E\subset D$ supported on singular locus of $D$.  
\end{enumerate}
\end{lem}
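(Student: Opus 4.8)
The plan is to identify $\hom(\cc G,\cc G)$ with the \emph{ring of multipliers} $(\cc G:\cc G):=\{\,f\in K(D):f\cc G\subseteq\cc G\,\}$ and then use that $\O_D$ is integrally closed (part (i)) or is the integral closure only after restricting to the smooth locus (part (ii)). For any rank-$1$ torsion-free sheaf $\cc G$ on the integral scheme $D$, any $\O_D$-linear endomorphism of $\cc G$ agrees, after the embedding $\cc G\hookrightarrow\cc G\otimes_{\O_D}K(D)\cong K(D)$, with multiplication by a unique $f\in K(D)$, which identifies $\hom(\cc G,\cc G)$ with $(\cc G:\cc G)$. Locally $\cc G$ is a finitely generated fractional ideal; choosing a nonzero local section $g_0$ gives $(\cc G:\cc G)\subseteq g_0^{-1}\cc G$, so $(\cc G:\cc G)$ is a coherent $\O_D$-module, and it is visibly a sheaf of subrings of $K(D)$ containing $\O_D$; being module-finite over $\O_D$ it is \emph{integral} over $\O_D$. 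For part (i) this is already the end: $D$ normal means $\O_D$ is integrally closed in $K(D)$, so $(\cc G:\cc G)\subseteq\O_D$, whence $\hom(\cc G,\cc G)\cong\O_D$ (only normality is used; the Gorenstein hypothesis is not needed for (i)).

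For part (ii), let $\nu\colon\wt D\to D$ be the normalization, so that $\nu_*\O_{\wt D}$ is the integral closure of $\O_D$ in $K(D)$; the argument above now only gives $\O_D\subseteq(\cc G:\cc G)\subseteq\nu_*\O_{\wt D}$. Since $\nu$ restricts to an isomorphism over the smooth locus $D_{\mathrm{sm}}$, the sheaf $\hom(\cc G,\cc G)$ agrees with $\O_D$ over $D_{\mathrm{sm}}$, and it remains only to see it is reflexive, so that it is the sheaf $\O_D(E)$ associated with a generalized divisor $E$. I would obtain reflexivity from the standard fact that on a Gorenstein curve every rank-$1$ torsion-free sheaf is reflexive --- for instance because $\omega_D$ is a line bundle and Grothendieck--Serre duality furnishes, for a Cohen--Macaulay (hence every torsion-free) sheaf $\cc G$ on $D$, a natural isomorphism $\cc G\cong\hom\big(\hom(\cc G,\omega_D),\omega_D\big)\cong\cc G^{\vee\vee}$. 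Granting this, $\hom(\cc G,\cc G)$ is a rank-$1$ reflexive fractional ideal sheaf containing $\O_D$, hence an effective generalized divisor $\O_D(E)$; effectivity is the inclusion $\O_D\subseteq\O_D(E)$, and $\op{Supp}(E)$ is the locus where $\hom(\cc G,\cc G)\ne\O_D$, which by the above lies in $D\setminus D_{\mathrm{sm}}=\op{Sing}(D)$.

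The only step that I expect to require real care is the reflexivity input used in part (ii): that a rank-$1$ torsion-free sheaf on a Gorenstein curve is reflexive. This is exactly what legitimizes calling $\hom(\cc G,\cc G)$ a generalized divisor, and it is where the Gorenstein hypothesis genuinely enters; everything else reduces to the elementary algebra of fractional ideals, integral closure, and the normalization map.
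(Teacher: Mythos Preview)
Your argument is correct and takes a genuinely different route from the paper. The paper proceeds sheaf-theoretically: it embeds $\cc G\hookrightarrow\cc G^{**}$, observes that $\hom(\cc G,\cc G^{**})\cong(\cc G\otimes\cc G^*)^\vee$ is reflexive as a dual, identifies it with $\O_D$ by extending across the codimension-$2$ singular locus (via Hartshorne's results on reflexive sheaves over Gorenstein schemes), and then in part (i) uses the inclusion $\hom(\cc G,\cc G)\subseteq\hom(\cc G,\cc G^{**})\cong\O_D$ together with the nonvanishing global section to conclude; part (ii) is similar, invoking \cite[Cor~1.10]{Ha} for reflexivity of the subsheaf. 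Your approach via the ring of multipliers $(\cc G:\cc G)$ and integral closure is more elementary commutative algebra: it makes transparent that part (i) needs only normality (not Gorenstein), and isolates exactly where Gorenstein enters in part (ii), namely the equivalence of torsion-free and reflexive on a Gorenstein curve. The paper's method has the advantage of staying within the reflexive-sheaf formalism used elsewhere in \cite{Ha}, while yours gives a cleaner explanation of the hypotheses. One small point: in part (i) you tacitly use that a normal surface is integral; this holds on each connected component, so is harmless.
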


\begin{proof}
i) \quad Normality implies that singular locus of $D$, denoted by $S$, must be 0-dimensional or empty. %First suppose that $\cc G$ is reflexive, so $\cc G=\cc G^{**}$ and $\cc G|_{D\setminus S}$ is invertible.  
Since $\cc G$ is torsion free it embeds into its double dual $\cc G^{**}$ with $\cc G^{**}/\cc G$ is 0-dimensional. By adjoint associativity 
$$\hom(\cc G, \cc G^{**})\cong \hom(\cc G \otimes \cc G^{*}, \O),$$ which shows that $\hom(\cc G, \cc G^{**})$ is reflexive (being the dual of a coherent sheaf). But on the nonsingular surface $D\setminus S$ we know  $\hom(\cc G, \cc G^{**})|_{D\setminus S} \cong \cc O_{D\setminus S}$ and two reflexive sheaves agreeing outside a codimension 2 subset are isomorphic \cite[Thm 1.12]{Ha}, so $\hom(\cc G, \cc G^{**}) \cong \cc O_{D}$. Now the natural  inclusion $\hom(\cc G, \cc G) \subseteq \hom(\cc G, \cc G^{**})$ proves that $\hom(\cc G, \cc G)$ is isomorphic to the ideal sheaf of some points on $D$. But,  since $\Hom(\cc G,\cc G)=\Gamma(\hom(\cc G, \cc G))\neq 0$ (in fact it is $\bb C$ by stability  of $\cc G$), this set of points must be empty i.e. $\hom(\cc G, \cc G)\cong \O_D$.

ii) \quad As in part i we see that $\hom(\cc G, \cc G^{**})$ is rank 1 and reflexive, the natural inclusion $\hom(\cc G, \cc G) \subseteq \hom(\cc G, \cc G^{**})$ is identity away from the singular locus of $D$, therefore by \cite[Cor 1.10]{Ha} $\hom(\cc G, \cc G)$ is reflexive. Finally, the natural section $\cc O_D\to \hom(\cc G, \cc G)$ (given by identity) shows that it is of the form $\O_D(E)$ as claimed.

\end{proof}

\section{Moduli spaces of 1-dimensional sheaves on surfaces} \label{2d}
In this section, we take $X$ to be a nonsingular projective surface with $H^1(\O_X)=0$.  The moduli space $M^{\op s}$  \eqref{mods} of stable 1-dimensional sheaves is equipped with the perfect obstruction theory \eqref{pot}. As before, for a given line bundle $L$ on $X$ let $M^{\op s}_L$ be the moduli space of stable 1-dimensional sheaves with fixed determinant $L$. Since $\pic(X)$ is 0-dimensional, $M^{\op s}_L\subset M^{\op s}$ is closed and open, and hence it inherits the same perfect obstruction theory as \eqref{pot}. We know $\Hom(\cc F,\cc F)\cong \bb C$ for any stable sheaf $\cc F \in M^{\op s}$ and also all $\Ext^{\ge 3}$ vanish, so the virtual dimension is 
\beq{vd}\vd_{M^{\op s}_L}=\op{ext}^1(\cc F,\cc F)-\op{ext}^2(\cc F,\cc F)=1-\chi(\cc F,\cc F)=1+L^2.\eeq  
Suppose $\dim |L|\neq 0$ and that $|L|$ is base point free.   Since $H^1(\O_X)=0$ we have $h^0(L|_D)=\dim |L|\neq 0$ for any $D\in |L|$, so there is a nonzero section $\O_D\to L|_D$, which induces a surjection $$H^1(\O_D)\onto{} H^1(L|_D),$$ and hence $h^0(\O_D)\ge h^1(L|_D)$. Therefore if in addition $D$ is irreducible so that ($h^0(\O_D)=1$), by \eqref{vd} $$\vd_{M^{\op s}_L}=1-\chi(\O_D,\O_D)=h^0(L|_D)-h^1(L|_D)+h^1(\O_D) \ge\dim |L|.$$  
This shows that the virtual dimension $$\vd_{M^{\op s}_D}=L^2+1-\dim |L|$$ of Proposition \ref{supp} is always nonnegative in the situation of this Section. 
\begin{prop}
Suppose $D\in |L|_{\op{ns}}$ then $[M^{\op s}_D]^{\un \vir}=[M^{\op s}_D]$ if $H^1(L|_D)=0$, and $[M^{\op s}_D]^{\un \vir}=0$, otherwise.
\end{prop}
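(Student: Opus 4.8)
The plan is to reduce everything to the Siebert-type formula of Corollary \ref{GD}, in which, for a \emph{nonsingular} $D$, every ingredient turns out to be topologically trivial. First I would record the geometry of the fiber: by Lemma \ref{lns}, for $D\in|L|_{\op{ns}}$ the divisor is a smooth projective curve, say of genus $g$, and $M^{\op s}_D\cong M^{\op s}(D,P)$ is the moduli space of rank $1$ torsion free sheaves on $D$ with Hilbert polynomial $P$; since $D$ is smooth these are line bundles of a fixed degree, so $M^{\op s}_D\cong\jac(D)$ is a nonsingular projective variety of dimension $g$ whose tangent bundle is trivial, $T_{M^{\op s}_D}\cong H^1(\O_D)\otimes\O$. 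Consequently its Fulton Chern class is $c_F(M^{\op s}_D)=c(T_{M^{\op s}_D})\cap[M^{\op s}_D]=[M^{\op s}_D]\in A_g(M^{\op s}_D)$, and the universal sheaf $\bb G_D$ on $D\times M^{\op s}_D$ is a line bundle.

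Next I would compute the class $\sff K(\bb G_D)$ of Definition \ref{KGD}. Since $\bb G_D$ is a line bundle one has $\hom(\bb G_D,\bb G_D)\cong\O_{D\times M^{\op s}_D}$ and $\hom(\bb G_D,\bb G_D(D))\cong p_D^*(L|_D)$, pulled back from $D$ (recall $\O_D(D)=L|_D$ since $L=\O_X(D)$); because $D$ is a curve these are the only nonvanishing cohomology sheaves of $R\hom_{p'}(\bb G_D,\bb G_D)$ and $R\hom_{p'}(\bb G_D,\bb G_D(D))$, so flat base change along the projection $D\times M^{\op s}_D\to M^{\op s}_D$ identifies both complexes with $R\Gamma(D,\O_D)\otimes\O$ and $R\Gamma(D,L|_D)\otimes\O$ respectively, i.e. complexes of trivial bundles placed in degrees $0$ and $1$. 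Truncating as in Definition \ref{KGD}, $\sff K(\bb G_D)$ is therefore a $\bb Z$-combination of trivial bundles, so $c(\sff K(\bb G_D))=1$. Plugging this together with the first paragraph into Corollary \ref{GD} yields $[M^{\op s}_D]^{\un\vir}=\{[M^{\op s}_D]\}_{\vd_{M^{\op s}_D}}$.

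It then remains to compare $\vd_{M^{\op s}_D}$ with $g$. Since $[M^{\op s}_D]$ is pure of dimension $g$, the expression above equals $[M^{\op s}_D]$ when $\vd_{M^{\op s}_D}=g$ and vanishes when $\vd_{M^{\op s}_D}\neq g$. Here $\vd_{M^{\op s}_D}=L^2+1-\dim|L|=L^2+2-h^0(L)$ by Proposition \ref{supp} and \eqref{vd} (using $H^1(\O_X)=0$). From $0\to\O_X\to L\to L|_D\to 0$ and $H^1(\O_X)=0$ one gets $h^0(L|_D)=h^0(L)-1=\dim|L|$, while Riemann--Roch on $D$ gives $\chi(L|_D)=D^2+1-g=L^2+1-g$; hence $h^1(L|_D)=h^0(L|_D)-\chi(L|_D)=g-\vd_{M^{\op s}_D}$. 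In particular $\vd_{M^{\op s}_D}\le g$ always, with equality precisely when $H^1(L|_D)=0$ and $\vd_{M^{\op s}_D}<g$ otherwise, which gives the stated dichotomy.

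I do not expect a genuine obstacle here: the only slightly delicate points are the identification $M^{\op s}_D\cong\jac(D)$ together with triviality of its tangent bundle (immediate from smoothness of $D$, as in Lemma \ref{lns}), and the flat base change computation showing that $\sff K(\bb G_D)$ is built from trivial bundles; once those are in place the argument is pure intersection-theoretic bookkeeping driven by Riemann--Roch on the curve $D$.
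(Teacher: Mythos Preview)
Your argument is correct and follows essentially the same route as the paper: identify $M^{\op s}_D$ with $\jac(D)$, use that $\bb G_D$ is a line bundle so that $R\hom(\bb G_D,\bb G_D)\cong\O$ and $R\hom(\bb G_D,\bb G_D(D))\cong p_D^*(L|_D)$, and observe that the resulting pushforwards along $p'$ are pulled back from a point and hence trivial. The only cosmetic difference is in bookkeeping: the paper writes the answer as $[M^{\op s}_D]^{\un\vir}=c_{h^1(L|_D)}\big(R^1p'_*(L|_D)\big)\cap[M^{\op s}_D]$ by first cancelling $\ext^1_{p'}(\bb G_D,\bb G_D)\cong T_{\jac(D)}$ against $c_F$, and then notes $R^1p'_*(L|_D)\cong\O^{h^1(L|_D)}$; you instead observe at the outset that every term of $\sff K(\bb G_D)$ and $T_{\jac(D)}$ is trivial, so $c(\sff K(\bb G_D))=1$ and $c_F(M^{\op s}_D)=[M^{\op s}_D]$, and then run the Riemann--Roch dimension count $g-\vd_{M^{\op s}_D}=h^1(L|_D)$ to conclude.
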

\begin{proof} As we have seen before, $M^{\op s}_D\cong \jac(D)$ so is nonsingular of dimension $g:=h^{1}(\O_D)$, i.e. the genus of the nonsingular curve $D$. The universal sheaf $\bb G_D$ is identified with a Poincar\'e line bundle tensored by the pullback of a line bundle form $D$. In Corollary \ref{GD} the following identifications are then easily deduced \ba &\tau^{[1,3]} R\hom_{p'}(\bb G_D, \bb G_D)[1]\cong \ext^1_{p'}(\O_D,\O_D)\cong R^1p'_*{\O_D}\cong  T_{\jac(D)}\\&
\hom_{p'}(\bb G_D, \bb G_D(\bb D))\cong p'_* (L|_D), \quad \ext^1_{p'}(\bb G_D, \bb G_D(\bb D))\cong R^1p'_* (L|_D). \ea
Since $M^{\op s}_D$ is nonsingular its Fulton's Chern class is the same as the Chern class of its tangent bundle, and so we get  $$[M^{\op s}_D]^{\un \vir}=c_{h^1(L|_D)}(R^1p'_* (L|_D)) \cap [M^{\op s}_D].$$ Since $R^1p'_* (L|_D)\cong \O^{h^1(L|_D)}$, the claim follows. 
\end{proof}

If $K_X\cdot L<0$ then by adjunction formula and Serre duality for any $D\in |L|_{\op{ns}}$ we have $H^1(L|_D)\cong H^0(K_X|_D)=0$. In this situation we have
\begin{prop}\label{kxl}
Suppose $K_X\cdot L<0$ then $M^{\op s}_L$  is smooth and for any $D\in |L|$   
$$\sff c\big(K(\bb G_D)\big)=c\big(-\ext^1_{p'}(\bb G_D,\bb G_D)+\ext^2_{p'}(\bb G_D,\bb G_D)-\hom_{p'}(\bb G_D,\bb G_D(\bb D))\big),$$ (cf. Definition \ref{KGD} and Corollary \ref{GD}), and moreover,  $[M^{\op s}_D]^{\un \vir}=i_{|L|}^![M^{\op s}_L]$, where $[M^{\op s}_L]$ is understood as the sum of fundamental classes of connected components of $M^{\op s}_L$.  \end{prop}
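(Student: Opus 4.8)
The statement has three parts: (1) $M^{\op s}_L$ is smooth when $K_X\cdot L<0$; (2) an explicit Chern class formula for $\sff K(\bb G_D)$; (3) the identification $[M^{\op s}_D]^{\un\vir}=i_{|L|}^![M^{\op s}_L]$. I would attack them in this order.

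\emph{Step 1: Smoothness of $M^{\op s}_L$.} The plan is to show the obstruction space vanishes, i.e. that for every stable $\cc F\in M^{\op s}_L$ the reduced obstruction group is zero. Writing $\bb F=i_*\bb G$ with $\bb D=\div(\bb F)$, the long exact sequence of Proposition \ref{long} (applied fiberwise, or via the exact triangle in Corollary \ref{conf}) controls $\Ext^2_p(\cc F,\cc F)$ in terms of $\Ext^2_{p'}(\cc G,\cc G)$ and $\Ext^1_{p'}(\cc G,\cc G(D))$. Since $\dim X=2$, the divisor $D$ is a curve, $\cc G$ is a rank $1$ torsion free sheaf on $D$, and $\Ext^{\ge 2}_{D}(\cc G,\cc G)$ vanishes for dimension reasons. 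By Serre duality on $D$ (using that $D$ is Gorenstein with dualizing sheaf $\omega_D\cong (K_X\otimes L)|_D$ by adjunction), $\Ext^1_D(\cc G,\cc G(D))\cong \Hom_D(\cc G,\cc G(D)\otimes\omega_D)^\vee = \Hom_D(\cc G,\cc G\otimes (K_X\otimes L\otimes L^{-1})|_D)^\vee$—wait, one must be careful: $\cc G(D)=\cc G\otimes\O_D(D)$ and $\O_D(D)\cong L|_D$, so $\Ext^1_D(\cc G,\cc G(D))^\vee\cong\Hom_D(\cc G,\cc G\otimes K_X|_D)$. Since $K_X\cdot L<0$, the line bundle $K_X|_D$ has negative degree on (each component of) $D$, so this Hom group vanishes (using purity/stability of $\cc G$, or the torsion-free rank-$1$ structure from Lemma \ref{lns}(i) on the irreducible locus and a component-by-component argument in general). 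Then the long exact sequence forces the fixed-determinant obstruction space to vanish, whence $M^{\op s}_L$ is smooth; this also shows $M^{\op s}_L\to|L|$ and even $M^{\op s}\to\pic_L(X)$ behaves well near such $D$.

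\emph{Step 2: The Chern class formula.} Here I would simply unwind Definition \ref{KGD}. We have $\sff K(\bb G_D)=\tau^{[1,3]}R\hom_{p'}(\bb G_D,\bb G_D)-\tau^{[0,1]}R\hom_{p'}(\bb G_D,\bb G_D(D))$. The vanishing established in Step 1 (applied in families over $M^{\op s}_D$, or rather the fiberwise vanishing of $\Ext^{\ge 2}_D$ and the degree argument) shows $\tau^{[1,3]}R\hom_{p'}(\bb G_D,\bb G_D)\cong \ext^1_{p'}(\bb G_D,\bb G_D)[-1]\oplus(\text{contribution of }\ext^2,\ext^3)$; more carefully, on a surface $X$ the curve $D$ has no higher cohomology beyond degree $1$ for sheaf-Ext, so $\tau^{[1,3]}R\hom_{p'}(\bb G_D,\bb G_D)$ reduces to $\tau^{[1,2]}$, contributing $-\ext^1_{p'}(\bb G_D,\bb G_D)+\ext^2_{p'}(\bb G_D,\bb G_D)$ in K-theory (with signs from the shift), and $\tau^{[0,1]}R\hom_{p'}(\bb G_D,\bb G_D(D))$ contributes $\hom_{p'}(\bb G_D,\bb G_D(D))-\ext^1_{p'}(\bb G_D,\bb G_D(D))$. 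The claimed formula keeps only $-\ext^1_{p'}(\bb G_D,\bb G_D)+\ext^2_{p'}(\bb G_D,\bb G_D)-\hom_{p'}(\bb G_D,\bb G_D(D))$, so I need $\ext^1_{p'}(\bb G_D,\bb G_D(D))$ to drop out, which it does precisely because by the Step 1 Serre duality argument $\ext^1_{p'}(\bb G_D,\bb G_D(D))=0$ (its fibers vanish, and the sheaf is then zero). Taking total Chern classes and using multiplicativity gives the stated $\sff c(K(\bb G_D))$.

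\emph{Step 3: Identification of the virtual class.} Since $M^{\op s}_L$ is smooth (Step 1), its perfect obstruction theory \eqref{pot} has a locally free obstruction bundle $\ob$, and $[M^{\op s}_L]^{\vir}=e(\ob)\cap[M^{\op s}_L]$; but by Proposition \ref{det} combined with the vanishing of $\Ext^{\ge 3}$ on a surface and the fixed-determinant reduction, the obstruction sheaf is $\ext^2_{p'}(\bb G_D,\bb G_D)$-type which—rerunning the Serre duality with $K_X\cdot L<0$—is actually zero, so $[M^{\op s}_L]^{\vir}=[M^{\op s}_L]$ is the fundamental class (sum over components). Now Proposition \ref{supp}, or rather Theorem \ref{rest}(i) applied to $\div\colon M^{\op s}_L\to|L|$ and $i_{|L|}\colon\{D\}\hra|L|$, gives $[M^{\op s}_D]^{\un\vir}=i_{|L|}^![M^{\op s}_L]^{\vir}=i_{|L|}^![M^{\op s}_L]$, which is the assertion. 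One should note $i_{|L|}^!$ here is the refined Gysin map for the regular embedding of a point in the projective space $|L|$, so this records the Chern-class formula of Corollary \ref{GD} as an intersection-theoretic shadow of the ordinary fundamental class of the smooth space $M^{\op s}_L$.

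\textbf{Main obstacle.} The delicate point throughout is the Serre duality/vanishing argument on $D$, which must work even when $D$ is singular, reducible, or non-reduced: one needs $D$ Gorenstein (true as a divisor in the nonsingular $X$) so that Serre duality applies with $\omega_D=(K_X\otimes L)|_D$, and one needs $\Hom_D(\cc G,\cc G\otimes K_X|_D)=0$ for $\cc G$ rank $1$ torsion free. On an irreducible $D$ this follows from $\deg_D(K_X|_D)=K_X\cdot L<0$ together with the fact that a nonzero map of rank-$1$ torsion-free sheaves is generically an isomorphism hence forces the twisting line bundle to have nonnegative degree; for reducible or non-reduced $D$ one must argue component-by-component (on each reduced irreducible component $D_i$, $K_X\cdot D_i$ need not be negative individually—this is the real subtlety) or instead use that $\cc G$ is \emph{stable} as a pure sheaf on $D$ and deduce the vanishing from a slope/Hilbert-polynomial comparison with $\cc G\otimes K_X|_D$. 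I expect that carefully justifying this vanishing in the non-reduced/reducible case, and propagating it from closed fibers to the relative $\ext$-sheaves over $M^{\op s}_D$ (cohomology and base change), is where the bulk of the work lies.
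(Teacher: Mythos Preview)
Your overall strategy is sound and arrives at the right conclusions, but you have taken a substantially more circuitous route than the paper, and the detour is precisely what generates the ``main obstacle'' you worry about.

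The paper's proof is a few lines: apply Serre duality \emph{on the smooth surface $X$}, not on $D$. For any stable $\cc F\in M^{\op s}$ one has
\[
\Ext^2_X(\cc F,\cc F)\cong \Hom_X(\cc F,\cc F\otimes K_X)^*,
\]
and the right-hand side vanishes because $\cc F$ and $\cc F\otimes K_X$ are stable of the same multiplicity with $p(\cc F\otimes K_X)<p(\cc F)$ (here is where $K_X\cdot L<0$ enters). This argument is completely insensitive to whether $D=\div(\cc F)$ is reduced, irreducible, or nonsingular: stability is a hypothesis on $\cc F$ as a sheaf on $X$, and Serre duality on the smooth $X$ needs no case analysis. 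Base change then kills the obstruction sheaf $\ext^2_p(\bb F,\bb F)$, giving smoothness of $M^{\op s}$ and hence of its open--closed subset $M^{\op s}_L$; the Chern class identity and $[M^{\op s}_D]^{\un\vir}=i_{|L|}^![M^{\op s}_L]$ then drop out of Propositions~\ref{supp} and~\ref{long}.

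Your Step~1 instead pushes everything down to $D$ via Proposition~\ref{long}, applies Serre duality on the Gorenstein curve $D$, and is left needing $\Hom_D(\cc G,\cc G\otimes K_X|_D)=0$. But note that this group equals $\Hom_X(\cc F,\cc F\otimes K_X)$ by adjunction, so the stability argument you flag as a fallback \emph{is} the paper's argument, just reached after an unnecessary excursion. The component-by-component degree argument you first try is indeed problematic (as you observe, $K_X\cdot D_i$ need not be negative), so it is good that you do not rely on it.

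One small gap in your Step~2: you assert ``its fibers vanish, and the sheaf is then zero'' for $\ext^1_{p'}(\bb G_D,\bb G_D(D))$. Fibre vanishing does not imply sheaf vanishing without a base-change argument, and $R\hom_{p'}$ over the possibly singular $D$ is not a priori perfect. You do not actually need this: once $\ext^2_p(\bb F,\bb F)=0$, the long exact sequence of Proposition~\ref{long} gives an \emph{isomorphism} $\ext^1_{p'}(\bb G_D,\bb G_D(D))\cong\ext^3_{p'}(\bb G_D,\bb G_D)$, so these two terms cancel in $K$-theory and hence in the total Chern class. That is how the paper deduces the displayed formula for $c(\sff K(\bb G_D))$.
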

\begin{proof}
 By Serre duality and stability of $\cc F$ $$\Ext^2(\cc F,\cc F)\cong  \Hom(\cc F,\cc F\otimes K)^*=0 \qquad \forall \cc F \in M^{\op s}.$$ 
By basechange the obstruction sheaf $\ext^2_{p}(\bb F,\bb F)=0$. Therefore, the tangent sheaf $\ext^1_{p}(\bb F,\bb F)$ is locally free and hence $M^{\op s}$ is smooth and so is its closed and open subset $M^{\op s}_L$ (note we are assuming $h^{0,1}(\O_X)=0$ in this Section). Therefore, $[M^{\op s}_L]^{\un \vir}=[M^{\op s}_L]$ and the formula follows by Propositions \ref{supp} and \ref{long}.   
\end{proof}
\subsection{Example: $X=\bb P^2$} \label{P2}
In this subsection we take $X=\bb P^2$ and $L=\O(d)$ for $d\ge 1$. This means that we are in the situation of Proposition \ref{kxl}, in particular $M^{\op {s}}(P)=M^{\op {s}}_L(P)$ is smooth of dimension $d^2+1$. The geometry of moduli space $M^{\op {ss}}_L(P)$ is studied by many people, e.g. see \cite{Le, CC} and the references within. It is shown in \cite{Le} that $M^{\op {ss}}_L(P)$ is irreducible and locally factorial. For $d \le 2$ $M^{\op {ss}}_L(P) \cong |L|$. For $d\ge 3$ its Picard group is free abelian with two generators $\rho^*\O_{|L|}(1)$ and  $\cc T_{\bb P^2}$ characterized by the following universal property: let $c$ be the constant term of the Hilbert polynomial $P(z)=dz+c$ and \beq{uu} u:=\frac{1}{\gcd(c,d)}(-d\O+c\O_\ell)\in K^0(\bb P^2),\eeq where $\ell \subset \bb P^2$ is a fixed line; any $S$-flat family $\cc F$ of semistable sheaves with Hilbert polynomial $P$ over $\bb P^2\times S$ determines the modular morphism $f\colon S\to M^{\op {ss}}_L(P)$, then $$\det( Rp_{2*}\;(\cc F\otimes p_1^*u))\cong f^*\cc T_{\bb P^2}$$ where $p_1, p_2$ are the projections to the first and second factors of $\bb P^2\times S$. 

For any $D\in |L|$ denote $\cc T_D:=\cc T_{\bb P^2}|_{M^{\op {ss}}_D(P)}$. If $D \in |L|_{\op{ns}}$ then $c_1(\cc T_D)$ is a multiple of the theta divisor on $M^{\op {ss}}_D(P)\cong \jac(D)$ (\cite{Le}); if $c=1$ it is $d$ times the theta divisor (\cite[Lem 2.8]{CC}). By preservation of interaction numbers this gives 
\begin{prop} \label{TD} Suppose $P(z)=dz+1$ then for any $D\in |\O_{\bb P^2}(d)|$
  $$c_1(\cc T_D)^g \cup [M^{\op s}_D]^{\un \vir} =d^gg!, \qquad g:=(d-1)(d-2)/2.$$ 
 \end{prop}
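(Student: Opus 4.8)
The plan is to reduce the statement to a computation on the smooth fiber $M^{\op s}_{D'}$ for a \emph{nonsingular} $D'\in|\O_{\bb P^2}(d)|$ and then invoke deformation invariance. First I would observe that, by Proposition \ref{supp} (together with Proposition \ref{kxl}, since $K_{\bb P^2}\cdot\O(d)<0$ always), the class $[M^{\op s}_D]^{\un\vir}=i_{|L|}^![M^{\op s}_L]$ and hence its integral against a class pulled back from $M^{\op s}_L$ is independent of the choice of $D\in|L|$ --- this is exactly the first bullet point of the introduction applied to $\pi=\div$. The class $\cc T_D=\cc T_{\bb P^2}|_{M^{\op s}_D}$ is by construction the restriction of the global line bundle $\cc T_{\bb P^2}\in\pic(M^{\op s}_L)$, so $c_1(\cc T_D)^g\cup[M^{\op s}_D]^{\un\vir}$ does not depend on $D$. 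It therefore suffices to evaluate it for one $D=D'\in|L|_{\op{ns}}$.

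Next I would carry out that evaluation. For $D'$ nonsingular, Lemma \ref{lns} gives $M^{\op s}_{D'}\cong\jac(D')$, a smooth abelian variety of dimension $g=(d-1)(d-2)/2$ (the genus of a smooth plane curve of degree $d$, by the adjunction/genus formula). Since $M^{\op s}_{D'}$ is smooth, Corollary \ref{GD} or the preceding Proposition shows $[M^{\op s}_{D'}]^{\un\vir}$ is a Chern class of $\sff K(\bb G_{D'})$ capped with the fundamental class; but in fact, as $H^1(\O(d)|_{D'})\cong H^0(K_{\bb P^2}|_{D'})=0$, the correction factor is trivial and $[M^{\op s}_{D'}]^{\un\vir}=[M^{\op s}_{D'}]=[\jac(D')]$ (this is the $H^1(L|_D)=0$ case of the Proposition just before Proposition \ref{kxl}). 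So the integral becomes simply $\int_{\jac(D')}c_1(\cc T_{D'})^g$. By the cited results of \cite{Le} and \cite[Lem 2.8]{CC}, for $c=1$ the divisor $c_1(\cc T_{D'})$ equals $d$ times the principal theta divisor $\Theta$ on $\jac(D')$. Hence $\int_{\jac(D')}c_1(\cc T_{D'})^g=d^g\int_{\jac(D')}\Theta^g=d^g\cdot g!$, where the last equality is the classical Poincaré formula $\Theta^g=g!\,[\op{pt}]$ on a principally polarized abelian variety of dimension $g$. Combining with the deformation-invariance step gives the stated formula for arbitrary $D$.

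The main obstacle is bookkeeping the identification of $c_1(\cc T_D)$ as a well-defined global class restricting compatibly to the theta divisor on each smooth fiber: one must make sure the universal property of $\cc T_{\bb P^2}$ and the isomorphism $M^{\op s}_{D'}\cong\jac(D')$ are compatible, i.e. that the restriction of $\det(Rp_{2*}(\bb F\otimes p_1^*u))$ to a fiber really is the line bundle computing the theta polarization. This is precisely what \cite{Le,CC} supply, but it is the one non-formal input; everything else (smoothness, dimension count, vanishing of the correction term, Poincaré's formula, invariance of intersection numbers in flat families) is routine given the results already established in the paper. I would also remark that no properness hypothesis on $M^{\op s}_L(P)$ is needed here because, by Lemma \ref{lns}(i), every sheaf supported on an irreducible divisor is automatically stable, so the relevant moduli spaces $M^{\op s}_D(P)$ are projective for $D\in|L|_{\op{ir}}$, which is dense in $|L|$; and since the integral is constant in the family this suffices.
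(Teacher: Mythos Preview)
Your proof is correct and follows essentially the same approach as the paper: reduce to a nonsingular $D'$ via the independence of $i_{|L|}^![M^{\op s}_L]\cap c_1(\cc T_{\bb P^2})^g$ from the choice of fiber, then identify $c_1(\cc T_{D'})$ with $d\Theta$ on $\jac(D')$ using \cite{Le,CC} and apply Poincar\'e's formula $\Theta^g=g!$. Your closing remark on properness is unnecessary (for $P(z)=dz+1$ the coprimality of rank and degree already forces $M^{\op{ss}}_L=M^{\op s}_L$ to be projective), but this does not affect the argument.
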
 \qed
 
It would be interesting to express the result of Proposition \ref{TD}  in case $D$ is non-reduced or reducible in terms of intersection numbers over the moduli spaces of stable sheaves on some divisors $\subset \bb P^2$ of degrees $<d$. In the rest of this Section we study this question in the following special case. Suppose $d=2d'>3$ and  $$D=2D'\in |\O_{\bb P^2}(d)|  \text{ for some nonsingular degree $d'$ curve } D' \into{j} \bb P^2.$$ The moduli space $M^{\op s}_D$ parameterizes two types of stable sheaves: rank 2 torsion free sheaves with scheme theoretic support $D'\subset D$ and also stable sheaves with scheme theoretic non-reduced support $D$. We still assume $P(z)=dz+1$, so in particular semistability implies stability and also there exists a universal sheaf. Let $Y$ be the total space of the line bundle $$q \colon \O_{D'}(d')\to D'.$$ $Y$ is a nonsingular quasi-projective variety with the zero section $D' \into{z} Y$ and $z^*K_Y\cong j^* K_{\bb P^2}$. Considering $D\hra Y$ as twice thickened zero section, we want to apply Proposition \ref{kxl} to the moduli space $M^{\op s}(Y,P)$ of stable compactly supported sheaves on $Y$ with Hilbert polynomial $P$ (with respect to $q^*\O_{D'}(1)$). To do this, we consider $Y$ as an open subset of the projective bundle $$\bb P:=\bb P(\O_{D'}\oplus \O_{D'}(d'))\xr{q} D'.$$
Let $B\subset |\O_{\bb P}(2)|$ be the open subset of divisors whose supports do not intersect the infinity divisor $\bb P\setminus Y$ (and so their supports remain compact in $Y$). Let $b\in B$ be the closed point corresponding to $D=2D'$ i.e. the twice thickened zero section $D'$ of $Y$. Let $M^{\op s}_{\O_{\bb P}(2)}(\bb P,P)$ be the moduli space of stable torsion sheaves on $\bb P$  with Hilbert polynomial $P$ (with respect to $q^*\O(1)$) and fixed determinant $\O_{\bb P}(2)$. Any closed point $\cc F \in M^{\op s}(Y,P)$ viewed as a sheaf in $\bb P$ has determinant $\O_{\bb P}(2)$, and hence we get the following cartesian diagram% Hilbert polynomial $P$ (with respect to?????)
$$ \xymatrix {M^{\op s}_D(P) \ar@{^(->}[r] \ar[d] & M^{\op s}(Y,P)\ar@{^(->}[r] \ar[d]^-{\div} & M^{\op s}_{\O_{\bb P}(2)}(\bb P,P) \ar[d]^-{\div}\\
\{b\} \ar@{^(->}[r]^-{i_B} &B\ar@{^(->}[r] &|\O_{\bb P}(2)|.}$$

To summarize, we view $M^{\op s}_D(P)$ as subschemes of both $M^{\op s}(Y,P)$ and $M^{\op s}_{\O_{\bb P^2}(d)}(\bb P^2,P)$ and since $D$ has the same normal bundle as subschemes of $Y$ and $\bb P^2$  the resulting virtual cycles are the same  (Corollary \ref{GD}) $$i_B^![M^{\op s}(Y,P)]^{\vir}=[M^{\op s}_D(P)]^{\un \vir}=i_{|\O_{\bb P^2}(d)|}^![M^{\op s}_{\O_{\bb P^2}(d)}(\bb P^2,P)]^{\vir}.$$
The line bundle $\cc T_D \to M^{\op s}_D(P)$ can also be  obtained by restricting a line bundle $\cc T_Y\to M^{\op s}(Y,P)$ defined analogously by replacing $u$ in \eqref{uu} by (recall that we have taken $c=1$ in this discussion) \beq{cc}-d\O+\O_f\in K(Y),\eeq where $f\subset Y$ is a fiber of $q$. 

The point of working on $Y$ (instead of $\bb P^2$) is that $Y$, being the total space of a line bundle, can be equipped with a $\bb C^*$-action scaling its fibers and with the fixed set $Y^{\bb C^*}=z(D')$. This induces a $\bb C^*$-actions on $B$ with $B^{\bb C^*}$=\{b\} and on $M^{\op s}(Y,P)$ and hence on its $\bb C^*$-invariant subset $M^{\op s}_D(P)\subset M^{\op s}(Y,P)$ with  $$M^{\op s}_D(P)^{\bb C^*}=M^{\op s}_L(Y,P)^{\bb C^*}.$$ Moreover, the morphism $\div \colon M^{\op s}(Y,P)\to B$ above is $\bb C^*$-equivariant.
The class \eqref{cc} has a lift \beq{ecc}-d\O+\O_f \otimes \bff t\eeq to the equivariant K-group $K^0(Y)^{\bb C^*}$, and so the line bundle $\cc T_Y$ and hence its restriction $\cc T_D$ have equivariant lifts. 

 Let $\bff t$ be the 1-dimensional representation of $\bb C^*$ determined by the above $\bb C^*$-action on each fiber of $Y$. The following facts about the components of the fixed sets can be proven similarly to e.g. \cite{GSY2, TT} and so we skip their proofs here. The fixed set  $M^{\op s}(Y,P)^{\bb C^*}$ has two types of components:  \subsubsection*{1. Rank 2 vector bundles over $D'$} The first type consists of sheaves of the form $z_*\cc G$ where $\cc G$ is a rank 2 stable vector bundle on $D'$. A component of $M^{\op s}(Y,P)^{\bb C^*}$ is therefore identified with $$M^{\op s}(D', 2,2g'-1),$$ the moduli space of rank 2 vector bundles of degree $2g'-1$ on $D'$, where $g':=(d'-1)(d'-2)/2$ is the genus of $D'$. This moduli space is projective (semistability implies stability because rank and degree are coprime) and nonsingular of dimension $4(g'-1)+1$ \cite{N}.  

\subsubsection*{2. Jacobian and symmetric products of $D'$} The second type of fixed sheaves consists of stable sheaves $\cc F$ fitting into a $\bb C^*$-equivariant short exact sequences $$0\to z_*\cc G_1\otimes \bff t^{-1} \to \cc F\to z_* \cc G_0\to 0$$ for some line bundles $\cc G_0, \cc G_1$ on $D'$. Any such sequence is split after pushing down via the affine morphism $q$, i.e. $q_*\cc F\cong \cc G_0 \oplus \cc G_1\otimes \bff t^{-1}$. The $\O_Y$-module structure on $\cc F$ then induces a nonzero section of the line bundle $\cc G_0^* \cc G_1 (d')$ (if zero, the short exact sequence above would be split and hence $\cc F$ would be unstable). $\cc F$ is thus determined by a line bundle $\cc G_1$ and an element of the Hilbert scheme or equivalently symmetric product on $D'$ $$\op{Sym}^{d'^2+\deg(\cc G_1)-\deg(\cc G_0)}(D').$$ 
 By the short exact sequence above and stability of $\cc F$ we must also have $$\deg(\cc G_1)+\deg(\cc G_0)=2g'-1, \qquad  \deg(\cc G_1)< (2g'-1)/2.$$ To summarize, there is a disjoint decomposition:
$$M^{\op s}_L(Y,P)^{\bb C^*}\cong M^{\op s}(D', 2,2g'-1) \cup \bigcup_{k=\lceil \frac{1-3d'}2 \rceil }^{g'-1} \pic^k(D')\times \op{Sym}^{2k+3d'-1}(D').$$

\subsubsection*{Restriction of p.o.t to $M^{\op s}(D', 2,2g'-1)$} Let $\bb G$ be the universal sheaf on $D'\times M^{\op s}(D', 2,2g'-1)$. We apply Proposition \ref{Rhoms} to the embedding  $$D'\times M^{\op s}(D', 2,2g'-1)\into{z} Y \times M^{\op s}(D', 2,2g'-1).$$ From the resulting exact triangle one easily deduce the exact sequence of vector bundles on $M^{\op s}(D', 2,2g'-1)$ %, and use the vanishing $Ext^1(\cc G,\cc G(D'))=0$ for any closed point $\cc G \in M(D', 2,2g'-1)$, which holds by Serre duality and stability of $\cc G$. Therefore we 
$$0\to \ext^1_{p'}(\bb G,  \bb G)\to \ext^1_p(z_*\bb G, z_* \bb G) \to  \hom_{p'}(\bb G, \bb G(D')\otimes \bff t)\to 0.$$ The left term in this exact sequence is the tangent bundle of $M^{\op s}(D', 2,2g'-1)$  and is of rank $4(g'-1)+1$. The right term is the moving part of the restriction p.o.t.  to $M^{\op s}(D', 2,2g'-1)$ and is of rank $d^2+4(1-g')$.

\subsubsection*{Restriction of p.o.t to a second type component} We now turn to the component \beq{picsym} \cc C_k:=\pic^k(D')\times \op{Sym}^{2k+3d'-1}(D') \eeq of the fixed locus.  The universal sheaf $\bb F$ on $Y\times \cc C_k$ fits into an exact sequence of the form \beq{exsq} 0\to z_*\bb G_1\otimes \bff t^{-1} \to \bb F\to z_* \bb G_0\to 0,\eeq where $\bb G_1$ is a degree $k$ Poincar\'e line bundle pulled back from $D'$ times the first factor of $\cc C_k$, and $\bb G_0$ is a degree $2g'-1-k$ Poincar\'e line bundle pulled back from $D'$ times the second factor of $\cc C_k$,  and \beq{incz} z\colon D'\times \cc C_k \hra  Y \times \cc C_k \eeq is the inclusion.  
Using the exact sequence \eqref{exsq} and Proposition \ref{Rhoms} applied to the inclusion \eqref{incz}, one can see that the fixed part of obstruction theory is the tangent bundle of $\cc C_k$, and its moving part in K-theory is $$\cc \O^{2h^0(\O_{D'}(d'))}\otimes \bff t+p'_*(\bb G_1^*\bb G_0(d'))\otimes \bff t^2-p'_*(\bb G_1^*\bb G_0)\otimes \bff t-R^1p'_*(\bb G_0^*\bb G_1)\otimes \bff t^{-1}.$$

\subsubsection*{Restriction of $\cc T_D$ to the fixed components} The restriction of \eqref{ecc} to $D'$ is $u=-d\O_{D'}+\O_p\otimes \bff t$, and hence resrtriction of $\cc T_{D}$ to $M(D', 2,2g'-1)$ and $\cc C_k$ 
are respectively the equivariant line bundles $$\cc T_1:=\det p'_*(\bb G\otimes u),\qquad \cc T_{2,k}:=\det p'_*\big((\bb G_0+\bb G_1\otimes \bff t^{-1})\otimes u\big).$$

We are now ready to apply the virtual localization formula \eqref{GP} with $C=\{b\}$ (recall that $b\in B$ corresponds to the divisor $D$). We first find the weights of the action on $N_{C/B}\cong T_{B,b}$. Let $\bff s=c_1(\bff t)$. We use the identification $$H^0(\O_{\bb P}(2))\cong H^0(\O_{D'})\oplus H^0(\O_{D'}(d'))\otimes \bff t \oplus H^0(\O_{D'}(2d'))\otimes \bff t^2,$$ and $h^0(\O_{D'}(d'))=\frac 12d'(d'+3)$ and $h^0(\O_{D'}(2d'))=\frac 32d'(d'+1)$ to find that $$e(N_{C/B})=2^{\frac 32d'(d'+1)} \;\bff s^{d'(2d'+3)}.$$
 By \eqref{GP} 
\ba &2^{\frac {-3}2d'(d'+1)} [M_D]^{\un \vir}=\\& \bff s^{d'(2d'+3)}j'_* e\big(-\hom_{p'}(\bb G, \bb G(D')\otimes \bff t)\big)\cap [M(D',2,2g'-1)]\\
& +\bff s^{d'^2} \sum_{k=\lceil \frac{1-3d'}2 \rceil }^{g'-1} j'_* \frac{e\big(p'_*(\bb G_1^*\bb G_0)\otimes \bff t\big)\; e\big(R^1p'_*(\bb G_0^*\bb G_1)\otimes \bff t^{-1}\big)}{e\big(p'_*(\bb G_1^*\bb G_0(d'))\otimes \bff t^2\big)}\cap [\cc C_k].\ea 

Applying $c_1(\cc T_D)^g \cap -$ to both sides this formula and applying Proposition \ref{TD} we have proven
\begin{prop} \label{rk2D'}
\ba &2^{\frac {-3}2d'(d'+1)}d^gg!=\int_{M^{\op s}(D',2,2g'-1)}\frac{ \bff s^{d'(2d'+3)}c_1(\cc T_1)^g}{e\big(\hom_{p'}(\bb G, \bb G( D')\otimes \bff t)\big)}
\\
&+  \sum_{k=\lceil \frac{1-3d'}2 \rceil }^{g'-1} \int_{\cc C_k}\frac{\bff s^{d'^2}\; c_1(\cc T_{2,k})^g \;e\big(p'_*(\bb G_1^*\bb G_0)\otimes \bff t\big)\; e\big(R^1p'_*(\bb G_0^*\bb G_1)\otimes \bff t^{-1}\big)}{e\big(p'_*(\bb G_1^*\bb G_0(d'))\otimes \bff t^2\big)},
\ea where $\cc C_k$ is given in \eqref{picsym}.
\end{prop}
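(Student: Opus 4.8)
The plan is to assemble Proposition~\ref{rk2D'} by combining the virtual localization formula \eqref{GP} with the computation of the fixed loci and their restricted obstruction theories that has already been carried out, together with the intersection-number input of Proposition~\ref{TD}. First I would set up the localization formula \eqref{GP} applied to $M^{\op s}(Y,P)$, whose fixed locus decomposes as $M^{\op s}(D',2,2g'-1)\cup\bigcup_k\cc C_k$ as recorded above. Since the $C=\{b\}$ case is what appears in the statement, the next step is to pass from $[M^{\op s}(Y,P)]^{\vir}$ to $[M^{\op s}_D(P)]^{\un\vir}=i_B^![M^{\op s}(Y,P)]^{\vir}$ using formula \eqref{loc}, which introduces the correction factor $e(N_{C/B})^{-1}$; the Euler class $e(N_{C/B})=2^{\frac32 d'(d'+1)}\,\bff s^{d'(2d'+3)}$ is computed from the explicit weight decomposition of $H^0(\O_{\bb P}(2))$ and the dimension counts $h^0(\O_{D'}(d'))=\tfrac12 d'(d'+3)$, $h^0(\O_{D'}(2d'))=\tfrac32 d'(d'+1)$, which I would verify by Riemann--Roch on the genus-$g'$ curve $D'$ (noting $\deg\O_{D'}(d')=d'^2$, $\deg\O_{D'}(2d')=2d'^2$ both exceed $2g'-2$ so higher cohomology vanishes).

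Next I would substitute the moving parts of the restricted obstruction theories into the denominators of the localization sum. For the $M^{\op s}(D',2,2g'-1)$ component, the fixed part is the tangent bundle and the moving part is $\hom_{p'}(\bb G,\bb G(D')\otimes\bff t)$, so since this component is nonsingular its contribution is $\bff s^{d'(2d'+3)}\,e\big(-\hom_{p'}(\bb G,\bb G(D')\otimes\bff t)\big)\cap[M^{\op s}(D',2,2g'-1)]$ after cancelling the $N_{C/B}$ factor's $\bff s$-power against the one sitting in front. For each $\cc C_k$, the moving part in K-theory was found to be $\O^{2h^0(\O_{D'}(d'))}\otimes\bff t+p'_*(\bb G_1^*\bb G_0(d'))\otimes\bff t^2-p'_*(\bb G_1^*\bb G_0)\otimes\bff t-R^1p'_*(\bb G_0^*\bb G_1)\otimes\bff t^{-1}$; taking its Euler class (which is multiplicative on the additive K-theory class, with the negative terms going to the denominator) produces the ratio $e\big(p'_*(\bb G_1^*\bb G_0)\otimes\bff t\big)\,e\big(R^1p'_*(\bb G_0^*\bb G_1)\otimes\bff t^{-1}\big)\big/e\big(p'_*(\bb G_1^*\bb G_0(d'))\otimes\bff t^2\big)$ together with the scalar $\bff s^{d'^2}$ coming from $e(\O^{2h^0(\O_{D'}(d'))}\otimes\bff t)=\bff s^{2h^0(\O_{D'}(d'))}=\bff s^{d'(d'+3)}$ divided by the $\bff s$-power in $e(N_{C/B})$ — I would double-check this exponent bookkeeping carefully, since $d'(d'+3)-d'(2d'+3)=-d'^2$ does not immediately give $+\bff s^{d'^2}$, so the accounting must also absorb the localization denominator weights.

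Having written $2^{-\frac32 d'(d'+1)}[M_D]^{\un\vir}$ as the stated sum of pushforwards of classes on the fixed components, the final step is to cap with $c_1(\cc T_D)^g$ and take degrees. Here $c_1(\cc T_D)^g\cap[M^{\op s}_D(P)]^{\un\vir}=d^g g!$ by Proposition~\ref{TD}, giving the left-hand side; on the right-hand side, $c_1(\cc T_D)$ restricts to $c_1(\cc T_1)$ on $M^{\op s}(D',2,2g'-1)$ and to $c_1(\cc T_{2,k})$ on $\cc C_k$ by the identification of $\cc T_D|$-restrictions recorded above, using the projection formula to move $c_1(\cc T_D)^g$ through $j'_*$. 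Turning the cap products into integrals over the (projective, nonsingular) fixed components $M^{\op s}(D',2,2g'-1)$ and $\cc C_k=\pic^k(D')\times\op{Sym}^{2k+3d'-1}(D')$ yields exactly the claimed identity. I expect the main obstacle to be the precise tracking of the $\bff s$-powers and equivariant weights: reconciling the $\bff s$-exponents from $e(N_{C/B})$, from the trivial summands $\O^{\bullet}\otimes\bff t$ in the moving obstruction theories, and from the localization denominators so that the scalar in front of each term comes out as stated ($\bff s^{d'(2d'+3)}$ for the rank-2 term and $\bff s^{d'^2}$ for the $\cc C_k$ terms) requires care, and one must also confirm that no $\bff s$-dependence survives in the final numerical identity (it should not, since $[M_D]^{\un\vir}$ is an honest non-equivariant class), which serves as a useful consistency check.
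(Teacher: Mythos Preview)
Your approach is exactly the paper's: apply the localization formula \eqref{loc} to $[M_D]^{\un\vir}$ using the already-computed fixed loci and moving parts, then cap with $c_1(\cc T_D)^g$ and invoke Proposition~\ref{TD}. The only slip is the direction of the $e(N_{C/B})$ correction: in \eqref{loc} the denominator is $e\big((F^{\bu,\op m})^\vee-\pi^*N_{C/B}\big)$, so $e(N_{C/B})$ ends up in the \emph{numerator}, not as a factor $e(N_{C/B})^{-1}$; once you flip this, the $\cc C_k$ term gives $\bff s^{d'(2d'+3)}/\bff s^{d'(d'+3)}=\bff s^{d'^2}$ (the trivial summand $\O^{2h^0(\O_{D'}(d'))}\otimes\bff t$ of the moving part sits in the denominator), resolving the exponent discrepancy you flagged.
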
\qed 

\section{Moduli spaces of 2-dimensional sheaves on threefolds} \label{3d}
In this section, we take $X$ to be a nonsingular projective threefold with $H^1(\O_X)=0=H^2(\O_X)$.  We assume that the Hilbert polynomial of $K_X$ is less than the Hilbert polynomial of $\O_X$ (so in particular $H^3(\O_X)=0$ by Serre duality), this ensures that Condition \eqref{trfr} is satisfied, and hence the moduli space $M^{\op s}=M^{\op s}(X,P)$  \eqref{mods} of stable 2-dimensional sheaves is equipped with the perfect obstruction theory \eqref{pot}. An example of this would be $X=\bb P^3$ that we will study later in this Section. As before, for a given line bundle $L$ on $X$ let $M^{\op s}_L$ be moduli space of stable 2-dimensional sheaves with fixed determinant $L$. Since $\pic(X)$ is 0-dimensional, $M^{\op s}_L\subset M^{\op s}$ is closed and open, and hence it inherits the same perfect obstruction theory. Unlike in Section \ref{2d} (Proposition \ref{kxl}), $M^{\op s}_L$ in this Section is usually obstructed, even if $L$ is very ample. We know $\Hom(\cc F,\cc F)\cong \bb C$ for any stable sheaf $\cc F \in M^{\op s}$ and also $\Ext^{3}(\cc F,\cc F)=0$ by Serre duality and stability of $\cc F$, so the virtual dimension is 
\beq{vd3}\vd_{M^{\op s}_L}=\op{ext}^1(\cc F,\cc F)-\op{ext}^2(\cc F,\cc F)=1-\chi(\cc F,\cc F)=1-L^2K_X/2.\eeq  
As in Section \ref{2d} the virtual dimension only depends on the first Chern class of $L$ (and not to the other parts of the Hilbert polynomial $P$). But in contrast, at least when $L$ is sufficiently positive (so that $|L|\neq \emptyset$ and $H^{i>0}(L)=0$) and $H^1(\O_D)=0$ for some $D\in |L|_{\op{ir}}$ we have  \beq{ineq}\vd_{M^{\op s}_L} \le \dim |L| \text{ with equality when $H^2(\O_D)=0$.} \eeq 

To see this claim, again since  $H^1(\O_X)=0$ we have $h^0(L|_D)=\dim |L|$, and also $H^{i>0}(L|_D)=0$ by vanishing of higher cohomologies of $\O_X$ and $L$, so by \eqref{vd3} $\vd_{M^{\op s}_L}=1-\chi(\O_D,\O_D)=h^0(L|_D)-h^2(\O_D).$

Donaldson-Thomas invariants of threefolds for which equality occurs in \eqref{ineq} were studied in \cite{GS1} and shown to have modular properties.

\begin{prop} \label{bsp}
Suppose $|L|$ is base point free, and $H^{i>0}(L)=0$. Also suppose that $H^{i>0}(\O_D)=0$ for some $D\in |L|_{\op{ns}}$. If $D\in |L|_{\op{ir}}$ is a normal surface then 
\ba [M^{\op s}_D]^{\un \vir}=\Big \{c\big(&\tau^{[1,3]} R\hom_{p'}(\bb G_D, \bb G_D)+\ext^1_{p'}(\bb G_D, \bb G_D( D)) \big) \cap c_F(M^{\op s}_D)\Big \}_{0},\ea
and $\deg [M^{\op s}_D]^{\un \vir}$ can be expressed as an integral over the Hilbert scheme of points over a nonsingular member of $|L|$.
\end{prop}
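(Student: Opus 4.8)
The plan is to start from the Siebert-type identity of Corollary \ref{GD}, which already gives $[M^{\op s}_D]^{\un{\vir}}=\{c(\sff K(\bb G_D))\cap c_F(M^{\op s}(D,P))\}_{\vd}$ with $\sff K(\bb G_D)=\tau^{[1,3]}R\hom_{p'}(\bb G_D,\bb G_D)-\tau^{[0,1]}R\hom_{p'}(\bb G_D,\bb G_D(D))$, and then to simplify the right side. First I would record that $\vd_{M^{\op s}(D,P)}=0$: this is the topological quantity $\vd_{M^{\op s}_L}-h^0(L)+1$ of Proposition \ref{supp} (using $h^{0,1}(X)=0$), and the computation of Section \ref{3d} — using $H^{i>0}(\O_X)=0$ and $H^{i>0}(L)=0$ (hence $H^{i>0}(L|_D)=0$ and $h^0(L|_D)=h^0(L)-1$ from $0\to\O_X\to L\to L|_D\to0$), together with $\chi(\O_D)=1$ read off from the member of $|L|_{\op{ns}}$ with vanishing higher cohomology — gives $\vd_{M^{\op s}_L}=h^0(L)-1$, so $\vd_{M^{\op s}(D,P)}=0$. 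Hence $[M^{\op s}_D]^{\un{\vir}}\in A_0$ and it suffices to match total Chern classes of $\sff K(\bb G_D)$ and of $\tau^{[1,3]}R\hom_{p'}(\bb G_D,\bb G_D)+\ext^1_{p'}(\bb G_D,\bb G_D(D))$.

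The crux will be to show that $\hom_{p'}(\bb G_D,\bb G_D(D))$ is the \emph{trivial} bundle $\O^{h^0(L|_D)}_{M^{\op s}(D,P)}$; this is where normality of $D$ enters. By Lemma \ref{lns}(i) every closed point of $M^{\op s}(D,P)$ is a rank $1$ torsion free sheaf on $D$, and $D$, being a Cartier divisor in the smooth threefold $X$, is Gorenstein, so Lemma \ref{normal}(i) gives $\hom_D(\bb G_D|_m,\bb G_D|_m)\cong\O_D$ for every closed point $m$, via the identity. Twisting by the line bundle $\O_D(D)$ and pushing forward along $p'$, one gets a map $H^0(L|_D)\otimes\O_{M^{\op s}(D,P)}\to\hom_{p'}(\bb G_D,\bb G_D(D))$ which (using $H^{i>0}(L|_D)=0$ and cohomology and base change) is a morphism of locally free sheaves of the same rank, restricting on each fibre to the identity of $H^0(L|_D)$, hence an isomorphism. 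Granting this, Definition \ref{KGD} gives in $K$-theory the identity $\sff K(\bb G_D)+[\hom_{p'}(\bb G_D,\bb G_D(D))]=\tau^{[1,3]}R\hom_{p'}(\bb G_D,\bb G_D)+[\ext^1_{p'}(\bb G_D,\bb G_D(D))]$; the added term being trivial, it drops out of $c(-)$, and substituting into Corollary \ref{GD} with $\vd=0$ yields the first displayed formula. (The two summands on the right need not be perfect separately, but their sum plus a trivial bundle equals the perfect class $\sff K(\bb G_D)$, so the total Chern class on the right is well defined, as already noted before Definition \ref{KGD}.)

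For the last clause I would invoke deformation invariance. Since $i_{|L|}\colon\{D\}\hra|L|\cong\bb P^{\dim|L|}$ is a regular embedding and any two closed points of a projective space are rationally equivalent, compatibility of the refined Gysin map with proper pushforward (\cite[Thm 6.2]{Fu}) shows that $\deg[M^{\op s}_D]^{\un{\vir}}$, which by Proposition \ref{supp} equals $\deg\big(i_{|L|}^![M^{\op s}_L]^{\vir}\big)$, does not depend on $D\in|L|$ — this is the first bullet of the introduction with integrand $1$. Replacing $D$ by a general, hence nonsingular and irreducible, member $D_0$ of $|L|$ (using that $|L|$ is base point free), Lemma \ref{lns}(ii) makes $M^{\op s}(D_0,P)$ nonsingular and fibred over the Picard scheme of $D_0$, whose identity component is a reduced point because $h^1(\O_{D_0})=0$; thus $M^{\op s}(D_0,P)$ is a finite disjoint union of Hilbert schemes of points $D_0^{[m]}$, on each of which $\bb G_{D_0}$ is the universal ideal sheaf $\cc I_{\cc Z}$ twisted by a fixed line bundle, which cancels from every $R\hom_{p'}$ term. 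Since $M^{\op s}(D_0,P)$ is smooth, $c_F=c(T)$, so the formula of the first part rewrites $\deg[M^{\op s}_D]^{\un{\vir}}$ as a sum of tautological integrals in the Chern classes of $\cc I_{\cc Z}$, $\O_{D_0}(D_0)$ and $T_{D_0^{[m]}}$ over Hilbert schemes of points on the nonsingular surface $D_0$, as claimed.

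I expect the main obstacle to be the triviality of $\hom_{p'}(\bb G_D,\bb G_D(D))$: one cannot simply assert $\hom(\bb G_D,\bb G_D)\cong\O$ on $D\times M^{\op s}(D,P)$, since the comparison with $\hom_D(\bb G_D|_m,\bb G_D|_m)$ breaks down on the (zero-dimensional, but nonempty once $D$ is singular) non-locally-free locus of each fibre; the identity $\hom_{p'}(\bb G_D,\bb G_D(D))\cong\O^{h^0(L|_D)}$ has to be extracted only after $p'_*$, using normality of $D$ through Lemma \ref{normal}(i) and the vanishing $H^{i>0}(L|_D)=0$. Everything else is $K$-theoretic bookkeeping and standard intersection theory.
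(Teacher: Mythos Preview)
Your proposal is correct and follows essentially the same route as the paper: start from Corollary \ref{GD}, compute $\vd_{M^{\op s}_D}=0$ from the discussion around \eqref{ineq}, invoke Lemma \ref{normal}(i) (via normality of $D$) to see that $\hom_{p'}(\bb G_D,\bb G_D(D))$ is trivial and hence drops out of the Chern class, and then use deformation invariance (the paper says ``conservation of intersection numbers'') to move to a nonsingular member of $|L|$. The only point where the paper goes slightly further is that, on the nonsingular member $D_0$, it records the additional simplifications $\ext^{\ge 2}_{p'}(\bb G_{D_0},\bb G_{D_0})=0=\ext^{\ge 2}_{p'}(\bb G_{D_0},\bb G_{D_0}(D_0))$ and identifies $\ext^1_{p'}(\bb G_{D_0},\bb G_{D_0})$ with the tangent bundle, so that $\tau^{[1,3]}R\hom_{p'}(\bb G_{D_0},\bb G_{D_0})$ cancels against $c_F$ and one is left with the single top Chern class $c_{\mathrm{top}}\big(\ext^1_{p'}(\bb G_{D_0},\bb G_{D_0}(D_0))\big)\cap[M^{\op s}_{D_0}]$; your ``tautological integrals'' statement is of course compatible with this, just less explicit.
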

\begin{proof}
By assumptions and discussion above equality occurs in \eqref{ineq} for some $D$ (and hence for all $D\in |L|$), so $\vd_{M^{\op s}_D}=\vd_{M^{\op s}_L}-\dim |L|=0$. Now we apply Corollary \ref{GD} and note that $\hom_{p'}(\bb G_D, \bb G_D( D))$ is a trivial vector bundle by Lemma \ref{normal} and so has no effect in the Chern class. 

Now suppose $D\in |L|_{\op{ns}}$  be as in the statement. As we have seen $M^{\op s}_D$ is isomorphic to a Hilbert scheme of points on $D$ and hence is nonsingular.
Moreover, $\ext^{i\ge2}_{p'}(\bb G_D, \bb G_D)=0=\ext^{i\ge2}_{p'}(\bb G_D, \bb G_D( D))$ because of assumption $H^2(\O_D)=0=H^2(L|_D)$. This means $\ext^{1}_{p'}(\bb G_D, \bb G_D)$ and $\ext^{1}_{p'}(\bb G_D, \bb G_D( D))$ are vector bundles, the former being the tangent bundle of $M^{\op s}_D$ by the assumption $H^1(\O_D)=0$.
In particular, $c_F(M^{\op s}_D)=c(\ext^{1}_{p'}(\bb G_D, \bb G_D))\cap [M^{\op s}_D]$. Therefore the formula in Proposition simplifies to 
\ba [M^{\op s}_D]^{\un \vir}=\Big \{c\big(\ext^1_{p'}(\bb G_D, \bb G_D( D)) \big) \cap [M^{\op s}_D]\Big \}_{0}.\ea The last claim in Proposition now follows from the conservation of intersection numbers.
\end{proof}
\subsection{Example: $X=\bb P^3$}  \label{p3}
In this subsection we take $X=\bb P^3$ and $L=\O(d)$ for $d\ge 1$. $M^{\op {s}}(P)=M^{\op {s}}_L(P)$ has virtual dimension $2d^2+1$.  Of course, $|\O(d)|$ is base point free and $H^{i>0}(\O(d))=0$. For any $D$ hypersurface $H^1(\O_D)=0$ because $H^1(\O)=0=H^2(\O(-d))$. However, $H^2(\O_D)=0$ only if $d\le 3$, as $H^3(\O(-d))\neq 0$ otherwise. This means that for $d\le 3$ we are in the situation of Proposition \ref{bsp}
\subsubsection{\textbf{d= 2}} Fix the Hilbert polynomial $P(z)=z^2+2z+1-n$, where $n$ is a nonnegative integer.
Suppose $Q \in |\O(2)|$ is nonsingular. In this case, it is not hard to see that $M^{\op s}_Q(P)\cong Q^{[n]}$, the Hilbert scheme of $n$ points on the nonsingular quadratic surface $Q\cong \bb P^1\times \bb P^1$. Suppose $\bb I$ is the universal ideal sheaf on $Q\times Q^{[n]}$. By Proposition \ref{bsp}, \beq{degr} \deg [M^{\op s}_D(P)]^{\un \vir}=c_{2n} \big(\ext^1_{p'}(\bb I, \bb I(2)) \big) \cap [Q^{[n]}] \eeq for any $D\in |\O(2)|$.

Now suppose that $D=2D'\in |\O(2)|$ for some hyperplane $$\bb P^2 \cong D'\into{j} \bb P^3.$$ As in Section \ref{P2}, we would like to apply virtual localization to express $\deg [M^{\op s}_D(P)]^{\un \vir}$ as (virtual) integrations over $M^{\op s}(\bb P^2, P)$ and the nested Hilbert scheme over $\bb P^2$. Let $Y$ be the total space of the line bundle $$q \colon \O_{\bb P^2}(1)\to \bb P^2.$$ $Y$ is a nonsingular quasi-projective variety with the zero section identified with $D' \into{z} Y$ and we have $z^*K_Y\cong j^* K_{\bb P^3}$. By considering $D\hra Y$ as twice thickened zero section we can apply Proposition \ref{bsp} to the moduli space $M^{\op s}(Y,P)$ of stable compactly supported sheaves on $Y$ with Hilbert polynomial $P$ (with respect to polarization $q^*\O_{\bb P^2}(1)$). We can view $M^{\op s}_D(P)$ as a subscheme of $M^{\op s}(Y,P)$, and since $D$ has the same normal bundle as subschemes of $Y$ and $\bb P^2$ the resulting virtual cycles are the same  (Corollary \ref{GD}). %$$[M^{\op s}_D(P)]^{\vir}=i^!_{Y}[M^{\op s}(Y,P)]^{\vir}.$$

$Y$ is equipped with a $\bb C^*$-action scaling the fibers of the line bundle and with the fixed set $Y^{\bb C^*}=z(D')$. Let $\bff t$ be the 1-dimensional representation of $\bb C^*$ determined by the above $\bb C^*$-action on each fiber of $Y$. The $\bb C^*$-action on $Y$ induces a $\bb C^*$-action on $M^{\op s}(Y,P)$. As in Section \ref{P2}, we have a decomposition of the fixed set (see \cite{GSY2, TT} for a proof)
$$M^{\op s}(Y,P)^{\bb C^*}\cong M^{\op s}(\bb P^2,P ) \cup \bigcup_{k=\lceil n/2\rceil }^{n}  (\bb P^2)^{[k,n-k]},$$ where $M^{\op s}(\bb P^2,P )$ is the moduli space of stable sheaves on $\bb P^2$ with Hilbert polynomial $P$, and $(\bb P^2)^{[n_1,n_2]}$ is the nested Hilbert scheme of 0-dimensional subschemes $Z_2\subseteq Z_1  \subset \bb P^2$ with $\op{length}(Z_i)=n_i$.   The component $M^{\op s}(\bb P^2,P )$ is a  nonsingular projective variety of dimension $4n-4$ parameterizing stable torsion free sheaves of rank 2 on $\bb P^2$ with Chern classes $c_1=-h$ (minus class of a line) and $c_2=n$ \cite{Ma}(Since degree and rank are coprime there no strictly semistable sheaves, explaining the projectivity.). Let $\bb G$ be the universal sheaf over $\bb P^2\times M^{\op s}(\bb P^2, P)$ and $\bb I_1\subseteq \bb I_2$ be the universal ideal sheave sheaves over $\bb P^2 \times (\bb P^2)^{[k,n-k]}$. %Denote by $p$ the projection to the second facotrs in each of these products. By the choice of the Hilbert polynomial no strictly semistable sheaves are possible so the fixed locus is compact.

The fixed and moving parts of the obstruction theory was worked out in \cite{GSY2, TT}. 
The fixed parts of the obstruction theory recovers the fundamental class $[M^{\op s}(\bb P^2,P )]$ and the virtual cycles$ [(\bb P^2)^{[k,n-k]}]^{\vir}$ constructed in \cite{GSY1, GT1, GT2} on the rest of the components. By \cite[Prop 3.2]{GSY2} the moving part of obstruction theory on the component $M^{\op s}(\bb P^2,P )$ is \beq{cN} \cc N=\hom_{p'}(\bb G, \bb G(1))\otimes \bff t-\ext^1_{p'}(\bb G, \bb G(1))\otimes \bff t.\eeq 
On the other hand, by  \cite[Prop 3.2]{GSY2} on the component  $(\bb P^2)^{[k,n-k]}$ the K-theory class of the moving part is 
\bal \label{Nk} &\cc N_{k}=R\hom_p(\bb I_1,\bb I_1(1))\otimes \bff t+R\hom_p(\bb I_2,\bb I_2(1))\otimes \bff t+R\hom_p(\bb I_2,\bb I_1(2))\otimes \bff t^2\\ \notag
&- R\hom_p(\bb I_1,\bb I_2(-1))\otimes \bff t^{-1}- R\hom_p(\bb I_2,\bb I_1(1))\otimes \bff t.\eal

As in Section \ref{P2}, there is a $\bb C^*$-equivariant morphism $\div\colon M^{\op s}(Y,P)\to B $ where $B$ is an open subset of a projective space with $B^{\bb C^*}$ a single point $b$ corresponding to the divisor $D$ (twice thickened zero section of $Y$). The weights of the action on $N_{C/B}\cong T_{B,b}$ needed in \eqref{GP} are determined  by the identity $$T_{B,b}\cong  H^0(\O_{D'}(1))\otimes \bff t \oplus H^0(\O_{D'}(2))\otimes \bff t^2,$$  from which we get $e(T_{B,b})=64 \bff s^9$.
By \eqref{GP} 
\ba &[M_D(P)]^{\un \vir}= 64s^9j'_* e(-\cc N)\cap [M^{\op s}(\bb P^2,P )]+\\
&64s^9  \sum_{k=\lceil n/2\rceil }^{n}   j'_* e(-\cc N_k) \cap [(\bb P^2)^{[k,n-k]}]^{\vir}.\ea 

If $\iota\colon (\bb P^2)^{[k,n-k]} \subset (\bb P^2)^{[k]}\times (\bb P^2)^{[n-k]}$ is the natural inclusion, it is shown in \cite{GSY1, GT1} that $$\iota_*[(\bb P^2)^{[k,n-k]}]^{\vir}=c_{n}(\ext^1_{p'}(\bb I_1,\bb I_2))\cap [(\bb P^2)^{[k]}\times (\bb P^2)^{[n-k]}].$$ 
By \eqref{degr} we have proven
\begin{prop} \label{p1p1} Let $\cc N$ and $\cc N_k$ be as in \eqref{cN} and \eqref{Nk} then
\ba & \int_{(\bb P^1\times \bb P^1)^{[n]}}c_{2n} \big(\ext^1_{p'}(\bb I, \bb I(2))\big)= \int_{M^{\op s}(\bb P^2,P )} \frac{ 64\bff s^{9}}{ e(\cc N)}
\quad +\\
& \sum_{k=\lceil n/2\rceil }^{n} \int_{(\bb P^2)^{[k]}\times (\bb P^2)^{[n-k]}}\frac{ 64\bff s^{9}\;c_{n}(\ext^1_{p'}(\bb I_1,\bb I_2))}{ e(\cc N_k)}.
\ea
\end{prop}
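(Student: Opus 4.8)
The plan is to compute $\deg[M^{\op s}_D(P)]^{\un\vir}$ for $D\in|\O_{\bb P^3}(2)|$ in two different ways and equate the answers. On the one hand, Theorem \ref{rest} (applied to $\div$, as in Proposition \ref{supp}) shows $[M^{\op s}_D(P)]^{\un\vir}=i_{|\O(2)|}^![M^{\op s}_{\O(2)}(P)]^{\vir}$ lies in $A_0$ of the projective scheme $M^{\op s}_D(P)\cong M^{\op s}(D,P)$, and that its degree is independent of the choice of $D$. Taking $D=Q$ a nonsingular quadric, so that $M^{\op s}_Q(P)\cong Q^{[n]}\cong(\bb P^1\times\bb P^1)^{[n]}$ is smooth, Proposition \ref{bsp} (the formula \eqref{degr}) identifies this degree with $\int_{(\bb P^1\times\bb P^1)^{[n]}}c_{2n}(\ext^1_{p'}(\bb I,\bb I(2)))$, i.e. with the left-hand side of the claimed identity. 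So it remains to recompute $\deg[M^{\op s}_D(P)]^{\un\vir}$ for the single special divisor $D=2D'$, twice a hyperplane $\bb P^2\cong D'\subset\bb P^3$.

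The second computation goes through the local model. The normal bundle of $D'$ in $Y:=\mathrm{Tot}(\O_{\bb P^2}(1))$ agrees with that of $D'$ in $\bb P^3$, so the thickening $D=2D'\subset Y$ carries the same normal bundle $\O_D(D)$ as in $\bb P^3$; hence, by Corollary \ref{GD} (after compactifying $Y$ so that Proposition \ref{bsp} applies, and using $M^{\op s}_D(\bb P^3,P)\cong M^{\op s}_D(Y,P)$), the underlined virtual class is unchanged if we work inside $M^{\op s}(Y,P)$. Now $Y$ carries a $\bb C^*$-action scaling fibers with $Y^{\bb C^*}=D'$, inducing $\bb C^*$-actions on $M^{\op s}(Y,P)$ and on the divisor parameter space $B\subset|\O_{\bb P}(2)|$ for which $\div\colon M^{\op s}(Y,P)\to B$ is equivariant, $B^{\bb C^*}=\{b\}$ with $b\leftrightarrow D$, and all fixed loci are proper. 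Thus the localization formula \eqref{loc} applies with $C=\{b\}$. Into it I would substitute: the fixed decomposition $M^{\op s}(Y,P)^{\bb C^*}\cong M^{\op s}(\bb P^2,P)\sqcup\bigsqcup_{k}(\bb P^2)^{[k,n-k]}$ and the fact that the induced fixed obstruction theories recover $[M^{\op s}(\bb P^2,P)]$ (a smooth projective variety) and $[(\bb P^2)^{[k,n-k]}]^{\vir}$ (the classes of \cite{GSY1,GT1,GT2}), both from \cite{GSY2,TT}; the virtual normal bundles $\cc N$ and $\cc N_k$ of \eqref{cN} and \eqref{Nk}, from \cite[Prop 3.2]{GSY2}; and $e(N_{C/B})=64\bff s^9$, read off from $T_{B,b}\cong H^0(\O_{D'}(1))\otimes\bff t\oplus H^0(\O_{D'}(2))\otimes\bff t^2$.

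This produces $[M^{\op s}_D(P)]^{\un\vir}$ as $64\bff s^9\,j'_*e(-\cc N)\cap[M^{\op s}(\bb P^2,P)]$ plus $64\bff s^9\sum_k j'_*e(-\cc N_k)\cap[(\bb P^2)^{[k,n-k]}]^{\vir}$. To finish, I would push the nested terms onto $(\bb P^2)^{[k]}\times(\bb P^2)^{[n-k]}$ via $\iota_*[(\bb P^2)^{[k,n-k]}]^{\vir}=c_n(\ext^1_{p'}(\bb I_1,\bb I_2))\cap[(\bb P^2)^{[k]}\times(\bb P^2)^{[n-k]}]$ (from \cite{GSY1,GT1}) together with the projection formula, then take degrees of both sides and equate with the left-hand side obtained above. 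The individual geometric inputs---the perfect obstruction theories on $M^{\op s}(\bb P^3,P)$ and $M^{\op s}(Y,P)$, the fixed-locus descriptions, the virtual-normal-bundle and nested-Hilbert-scheme formulas---are all imported from the cited references, so the hard part is not any of these; it is the bookkeeping that glues them: checking that the class produced by the affine-bundle construction over $M^{\op s}(\bb P^3,P)$ (via $\det$ then $\div$) really agrees with the one produced in the local model $Y$ and that its degree is constant over $|\O_{\bb P^3}(2)|$ (which is exactly what Theorem \ref{rest} and Corollary \ref{GD} deliver), and that all the $\bff t$-weights in \eqref{cN}, \eqref{Nk} and in $e(N_{C/B})$ are normalized consistently so that, although each term of the final identity is only a Laurent polynomial in $\bff s$, their sum is the $\bff s$-independent number on the left.
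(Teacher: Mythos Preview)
Your proposal is correct and follows essentially the same approach as the paper: compute $\deg[M^{\op s}_D(P)]^{\un\vir}$ for a smooth quadric via Proposition \ref{bsp} to get the left-hand side, then for $D=2D'$ pass to the local model $Y=\mathrm{Tot}(\O_{\bb P^2}(1))$ via Corollary \ref{GD}, apply the localization formula with the fixed-locus data and virtual normal bundles from \cite{GSY2,TT}, push the nested terms to the product via the identity $\iota_*[(\bb P^2)^{[k,n-k]}]^{\vir}=c_n(\ext^1_{p'}(\bb I_1,\bb I_2))\cap[(\bb P^2)^{[k]}\times(\bb P^2)^{[n-k]}]$, and equate degrees. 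The ingredients, the order in which they are invoked, and the computation of $e(N_{C/B})=64\bff s^9$ all match the paper's argument.
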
\qed

%$$[M_D]^{\vir}=j'_*\frac{[M(Y,P)^{\bb C*}]^{\vir}}{e\big(((F^{\bu})^\vee)^{\op m}\big)}(2\bff s)^{9},$$ where $\bff s=c_1(\bff t)$.

\subsubsection{\textbf{d=3}} This case is similar to $d=2$. Fix the Hilbert polynomial $P(z)=3z^2/2+3z/2+1-n$, where $n$ is a nonnegative integer.
Suppose $Q \in |\O(3)|$ is nonsingular. Then,  $M^{\op s}_Q(P)\cong D^{[n]}$, the Hilbert scheme of $n$ points on the nonsingular cubic surface $Q$. Suppose $\bb I$ is the universal ideal sheaf on $Q\times Q^{[n]}$. By Proposition \ref{bsp}, $$\deg [M^{\op s}_D(P)]^{\un \vir}=c_{2n} \big(\ext^1_{p'}(\bb I, \bb I(3)) \big) \cap [Q^{[n]}]$$ holds for any $D\in |\O(3)|$.
It is possible to prove a similar formula as in Proposition \ref{p1p1} relating the intersection number $c_{2n} \big(\ext^1_{p'}(\bb I, \bb I(3)) \big) \cap [Q^{[n]}]$ to $$e\big(\ext^1_{p'}(\bb G, \bb G(1)) \otimes \bff t-\hom_{p'}(\bb G, \bb G(1))\otimes \bff t \big)\cap [M^{\op s}(\bb P^2,P )],$$ where $M^{\op s}(\bb P^2,P)$ is the moduli space of stable torsion free sheaves on $\bb P^2$ of rank 3 and Chern classes $c_1=-h$ (minus class of a line) and $c_2=n$. The correction terms will again consist of integrations over moduli spaces of flags of sheaves on $\bb P^2$. % We plan to get back to this in another work.

\subsubsection{\textbf{d=4 and beyond}} \label{d4} Fix the Hilbert polynomial $P(z)=2z^2+34-n$, where $n$ is a nonnegative integer.
Suppose $D \in |\O(4)|$ is nonsingular. Then, $M^{\op s}_D(P)\cong D^{[n]}$, the Hilbert scheme of $n$ points on the nonsingular quartic $K3$ surface. In this case $H^2(\O_D)\cong \bb C$ and so Proposition \ref{bsp} does not apply. 

Instead, let $\bb P^1\cong C \subset |\O(4)|$ be a line avoiding the locus of non-reduced or reducible divisors (having codimension $>1$ in $|\O(4)|$). By Lemma \ref{D/L} $M^{\op s}_C(P)\cong M^{\op s}(Y/\bb P^1,P)$, where $Y\subset \bb P^3\times \bb P^1$ is a nonsingular hypersurface of type (4,1) corresponding to  the Lefschetz pencil. It is a $K3$ fibration over $\bb P^1$ with finitely many fibers (108 of them) having a nodal singularity. Here,  any stable sheaf $\cc F\in  M^{\op s}_C(P))$ has rank 1 over its reduced irreducible support and so semistability implies stability.  Theorem \ref{rest}  
 gives a 0-dimensional cycle $[M^{\op s}_C(P)]^{\un \vir}  \in A_0(M^{\op s}_C(P))$. Similarly, one can consider a general hypersurface $Y$  of type $(4,a)$ in $\bb P^3\times \bb P^1$.  When $a=2$ $Y$ is a Calabi-Yau threefold. It would be interesting to find a relation between this $0$-dimensional cycles resulting from this construction and those studied in \cite{GS}.
%this calss and %As before  If we apply Proposition \ref{Rhoms} for  $i:\div(\bb F)=\bb D_C \subset \bb P^3\times M^{\op s}_C$, we can write
%\ba  &[M^{\op s}_C(P)]^{\vir}=\\&\Big \{c\big(\tau^{[1,3]} R\hom_{p'}(\bb G_C, \bb G_C)+\ext^1_{p'}(\bb G_C, \bb G_C(\bb D_C)) \big) \cap c_F(M^{\op s}_C)\Big \}_{0}.\ea

%Suppose $\bb I$ is the universal ideal sheaf. By Proposition \ref{bsp}, \beq{degr} \deg [M^{\op s}_D(P)]^{\vir}=c_{2n} \big(\ext^1_{p'}(\bb I, \bb I(2)) \big) \cap [D^{[n]}]. \eeq 

Similarly for $d\ge 4$ one can choose $C$ to be a nonsingular subvariety of $\O(d)$ of dimension $h^2(\O_D)=(d-1)(d-2)(d-3)/6$, where $D\in |\O(d)|$, or more generally choose $C$ to be a nonsingular projective variety admitting a finite (and hence flat) morphism to a nonsingular $h^2(\O_D)$-dimensional subvariety of $|\O(d)|$. Should one choose a suitable Hilbert polynomial (so that semistability implies stability), Theorem \ref{rest}  
 will give a 0-dimensional cycle $$[M^{\op s}_C(P)]^{\un \vir}  \in A_0(M^{\op s}_C(P))$$ resulted from a perfect obstruction theory over an affine bundle over $M^{\op s}_C(P)$. We summarize discussion above in the following Proposition:
\begin{prop}\label{dge4}
Let $d\ge 4$ and $a \ge 1$ be two integers, $$N:=(d-1)(d-2)(d-3)/6,\qquad M:=2d^2+1-N,$$ and $Y$ be a general hypersurface in $\bb P^3\times \bb P^{N}$ of type $(d,a)$. Let $P$ be a degree 2 polynomial with leading coefficient $d/2$ such that there are no strictly semistable sheaves with Hilbert polynomial $P$ on the fibers of $Y/\bb P^N$. There is an affine bundle over $M^{\op s}(Y/\bb P^N,P)$ admitting a perfect obstruction theory resulting in a 0-dimensional cycle given by the formula 
\ba [M^{\op s}(Y/\bb P^N,P)]^{\un \vir}= &\big \{c\big(\tau^{[1,3]} R\hom_{p}(\bb G, \bb G)-\tau^{[0,1]}R\hom_{p}(\bb G, \bb G(d,a))\\&+\O_{\bb P^N}(a)^M \big)\cap c_F\big(M^{\op s}(Y/\bb P^N,P)\big)\big \}_{0}.\ea
Here, $\bb G$ is a universal (twisted) sheaf on $M^{\op s}(Y/\bb P^N,P)\times_{\bb P^N} Y$, and $\O(d,a):=\O_{\bb P^3}(d)\boxtimes \O_{\bb P^N}(a)$.
\end{prop}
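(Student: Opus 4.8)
The plan is to assemble Proposition \ref{dge4} by specializing the general machinery of Theorem \ref{rest} and Corollary \ref{GD} to the base $B=|\O_{\bb P^3}(d)|=\bb P^{h^0(\O(d))-1}$ together with the linear subvariety (or, more generally, flat cover of a linear subvariety) $C\cong \bb P^N$, and then tracking the resulting virtual tangent bundle. First I would invoke Lemma \ref{D/L} (in its $C$-relative form) to identify $M^{\op s}_C(P)\cong M^{\op s}(\bb D_C/C,P)$, and note that since $Y\subset \bb P^3\times \bb P^N$ is a general hypersurface of type $(d,a)$, it is the pullback of the universal divisor $\bb D$ along a finite (hence flat) morphism $\bb P^N\to |\O(d)|$ onto a nonsingular subvariety — this is exactly the setup of part (ii) of Theorem \ref{rest}, so the genericity of $Y$ is what ensures flatness and nonsingularity of $C$. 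The hypothesis that no strictly semistable sheaves occur on the fibers of $Y/\bb P^N$ guarantees $M^{\op s}=M^{\op{ss}}$ there and the existence of a (twisted) universal sheaf $\bb G$ on $M^{\op s}(Y/\bb P^N,P)\times_{\bb P^N}Y$.

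Next I would compute the virtual dimension. By \eqref{vd3}, $\vd_{M^{\op s}_L}=1-L^2K_X/2=2d^2+1$ for $X=\bb P^3$, $L=\O(d)$, and by Proposition \ref{supp}-style bookkeeping (with $\pic(\bb P^3)=0$, so the determinant step is vacuous, and $\dim|L|$ replaced by $\dim C=N=(d-1)(d-2)(d-3)/6$) we get $\vd_{M^{\op s}_C(P)}=2d^2+1-N=M$... wait — here one must be careful: the correct bookkeeping gives $\vd_{M^{\op s}_C(P)}=\vd_{M^{\op s}_L}-\dim B+\dim C$, and since we are cutting down by a codimension-$N$ condition inside $|\O(d)|$ the net drop is $N$, yielding dimension $2d^2+1-N$; I would then observe that the claim in the proposition is that this cycle lives in $A_0$, i.e. that $2d^2+1-N=N$ — no, rather the intended reading (consistent with Section \ref{d4}) is that one chooses $C$ of dimension $h^2(\O_D)=N$ precisely so that $\vd=0$; checking $2d^2+1 - (\dim|\O(d)|) + N = 0$ reduces to the identity $h^0(\O(d))-1 - 2d^2-1 = N$, which I would verify by the elementary computation $h^0(\O_{\bb P^3}(d))=\binom{d+3}{3}$ and $\binom{d+3}{3}-2d^2-2 = \binom{d-1}{3}=N$. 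So the dimension statement is a finite check.

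Then I would run the Siebert-type formula. Applying Theorem \ref{rest}(ii) with $F^\bu=(\tau^{[1,2]}R\hom_p(\bb F,\bb F))^\vee[-1]$ gives
\[
[M^{\op s}_C(P)]^{\un\vir}=\big\{c\big(-(F^\bu)^\vee+N_{C/B}\big)\cap c_F(M^{\op s}_C(P))\big\}_0,
\]
and $-(F^\bu)^\vee$ is, in K-theory, $R\hom_p(\bb F,\bb F)$ up to the trivial summands removed in forming $\tau^{[1,2]}$. The normal bundle term $N_{C/B}$: since $C\to |\O(d)|$ is finite flat onto a linear $\bb P^N$, the normal bundle of that linear subspace in $\bb P^{h^0(\O(d))-1}$ is $\O(1)^{\oplus M}$, whose pullback to $\bb P^N=C$ is $\O_{\bb P^N}(a)^{\oplus M}$ once one matches the polarizations — here $a$ enters because $Y$ has type $(d,a)$, so the hyperplane class of $|\O(d)|$ pulls back to $a$ times the hyperplane class of $\bb P^N$. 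Finally I would apply the relative-$\Hom$ analysis of Lemma \ref{Rhoms} and Corollary \ref{conf} exactly as in Corollary \ref{GD}, rewriting $R\hom_p(\bb F,\bb F)$ (for $\bb F=i_*\bb G$, $\bb D_C\hookrightarrow Y\times M^{\op s}_C(P)$ with conormal $\O_Y(-D)|$ corresponding to $\O(d,a)$ restricted to $Y$) as $\tau^{[1,3]}R\hom_p(\bb G,\bb G)-\tau^{[0,1]}R\hom_p(\bb G,\bb G(d,a))$ in $K^0$, which is exactly the class appearing in the statement. Assembling these three ingredients — the identification of the moduli space, the normal-bundle computation, and the $R\hom$ rewriting — yields the displayed formula.

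I expect the main obstacle to be the normal-bundle/polarization bookkeeping: one must justify that a \emph{general} type-$(d,a)$ hypersurface $Y$ in $\bb P^3\times\bb P^N$ corresponds to a \emph{flat} morphism $\bb P^N\to|\O_{\bb P^3}(d)|$ onto a nonsingular subvariety with pulled-back normal bundle $\O_{\bb P^N}(a)^{\oplus M}$, and that the codimension-$N$ choice makes the locus of non-reduced/reducible divisors avoidable so that Lemma \ref{D/L} and Lemma \ref{lns}(i) apply fiberwise. This is where the genericity hypotheses on $Y$ and on $P$ are genuinely used; the rest is formal consequence of the results already established in Sections \ref{ar} and \ref{3d}.
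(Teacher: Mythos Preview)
Your proposal follows essentially the same route as the paper's own proof: apply Theorem \ref{rest}(ii) together with Lemma \ref{D/L}, recognize that the hypersurface $Y$ determines a modular morphism $i\colon \bb P^N\to |\O(d)|$ which (for general $Y$) is finite onto a nonsingular subvariety, compute the pulled-back normal bundle using $i^*\O_{|\O(d)|}(1)\cong\O_{\bb P^N}(a)$, and then rewrite $R\hom_p(\bb F,\bb F)$ via Lemma \ref{Rhoms}/Corollary \ref{conf} exactly as in the derivation of Corollary \ref{GD}. Your explicit verification of the virtual-dimension identity $\binom{d+3}{3}-2d^2-2=\binom{d-1}{3}$ and your flagging of the normal-bundle bookkeeping as the one genuinely delicate step are both on target; the paper's proof is correspondingly terse on precisely that point.
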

\begin{proof} This is an application of Theorem \ref{rest} in combination with Lemma \ref{D/L}. The hypersurface $Y$ determines a modular morphism $i\colon \bb P^N\to |\O(d)|$, which is of degree $a$ onto its image, a linear subspace of $|\O(d)|$. The map $i$ is the composition of a finite flat morphism followed by a regular embedding with normal bundle $\O_{\bb P^N}(1)^M$. The rest of proof is similar to the argument leading to Corollary \ref{GD}.
\end{proof}

\vspace{.5cm}

\noindent \textit{amingh@umd.edu\\ Department of  Mathematics, University of Maryland,  College Park, MD 20742-4015.}

%\noindent {\tt{amingh@math.umd.edu}},\quad 
%\noindent {\tt{University of Maryland}} \\
%\noindent {\tt{College Park, MD 20742-4015, USA}}

\end{document}